\documentclass[11pt]{amsart}

\usepackage{graphicx,latexsym,pinlabel,amssymb,arydshln}

\usepackage[all]{xy}
\SelectTips{cm}{}

\setlength{\voffset}{-1.0cm}
\setlength{\hoffset}{-1cm}
\addtolength{\textwidth}{2cm}
\addtolength{\textheight}{2cm}

%
%

\theoremstyle{plain}
\newtheorem{theorem}{Theorem}[section]
\newtheorem{lemma}[theorem]{Lemma}
\newtheorem{proposition}[theorem]{Proposition}

\newtheorem{corollary}[theorem]{Corollary}

\theoremstyle{definition}

\newtheorem{remark}[theorem]{Remark}

\newtheorem{definition}[theorem]{Definition}

\numberwithin{equation}{section}
\numberwithin{figure}{section}

%
%

%
%

\def \ie{i.e$.$, }

\def \mod{\ \operatorname{mod}\ }

\def \Z{\mathbb{Z}}
\def \Q{\mathbb{Q}}

\def \Lie{\mathfrak{L}}

\def \Malcev{\mathfrak{m}}

\def \Fat{\mathcal{F}\!at^b}
\def \Torelli{\mathcal{I}}
\def \MFat{\widetilde{\mathcal{F}\!at^b}}
\def \Mcg{\mathcal{M}}
\def \Pt{\mathfrak{Pt}}

\def \MalcevCompletion{\mathsf{M}}

\def \Aut{\operatorname{Aut}}
\def \Bar{\operatorname{B}}
\def \bch{\operatorname{bch}}

\def \F{\operatorname{F}}
\def \GLike{\operatorname{GLike}}

\def \Hom{\operatorname{Hom}}

\def \K{\operatorname{K}}
\def \Ker{\operatorname{Ker}}

\def \Img{\operatorname{Im}}

\def \P{\operatorname{P}}
\def \Prim{\operatorname{Prim}}
\def \Sp{\operatorname{Sp}}
\def \SW{\operatorname{SW}}

\def \Tor{\operatorname{Tor}}

\def \U{\operatorname{U}}
\def \Uhat{\widehat{\operatorname{U}}}

\newcommand{\gp}[1]{\mathsf{#1}}

\begin{document}

\title[]{Canonical Extensions of Morita homomorphisms\\ to the Ptolemy groupoid}

\date{June 27, 2011}

\author[]{Gw\'ena\"el Massuyeau}
\address{Institut de Recherche Math\'ematique Avanc\'ee, Universit\'e de Strasbourg \& CNRS,
7 rue Ren\'e Descartes, 67084 Strasbourg, France}
\email{massuyeau@math.unistra.fr}

\begin{abstract}
Let $\Sigma$ be a compact connected oriented surface with one boundary component.  
We extend each of Johnson's and Morita's homomorphisms to the Ptolemy groupoid of $\Sigma$.
Our extensions are canonical and take values into finitely generated free abelian groups.
The constructions are based on the $3$-dimensional interpretation of the Ptolemy groupoid,
and a chain map introduced by Suslin and Wodzicki to relate 
the homology of a nilpotent group to the homology of its Malcev Lie algebra. 
\end{abstract}

\maketitle

\begin{center}
\textsc{Introduction}
\end{center}

\vspace{0.2cm}

Let $\Sigma$ be a compact connected oriented surface of genus $g$, with one boundary component.
The mapping class group $\Mcg :=\Mcg(\Sigma)$ of the bordered surface $\Sigma$
consists of self-homeomorphisms of $\Sigma$ fixing the boundary pointwise, up to isotopy.
A classical theorem by Dehn and Nielsen asserts that the action of  $\Mcg$ on the fundamental group 
$\pi:= \pi_1(\Sigma,\star)$, whose base point $\star$ is chosen on $\partial \Sigma$, is faithful. 
Let $\Mcg[k]$ be the subgroup of $\Mcg$ acting trivially on the $k$-th nilpotent quotient  $\pi/\Gamma_{k+1} \pi$,
where $\pi=\Gamma_1 \pi \supset \Gamma_2 \pi \supset \Gamma_3 \pi \supset \cdots$ denotes the lower central series of $\pi$.
Thus, one obtains a decreasing sequence of subgroups
$$
\Mcg=\Mcg[0] \supset \Mcg[1] \supset \Mcg[2] \supset \cdots
$$
which is called the \emph{Johnson filtration} of the mapping class group, 
and whose study started with Johnson's works and was then developed by Morita.
(The reader is, for instance, refered to their surveys \cite{Johnson_survey} and \cite{Morita_survey}.)

The first term $\Mcg[1]$ of this filtration is the subgroup of $\Mcg$ acting trivially in homology,
namely the \emph{Torelli group} $\Torelli := \Torelli(\Sigma)$ of the bordered surface $\Sigma$. 
In their study of the Torelli group, Johnson \cite{Johnson_abelian,Johnson_survey} and Morita \cite{Morita_abelian}
introduced two families of group homomorphims with values in some abelian groups. 
For every $k\geq 1$, the \emph{$k$-th Johnson homomorphism}
$$
\tau_k: \Mcg[k] \longrightarrow \frac{\pi}{\Gamma_2 \pi} \otimes \frac{\Gamma_{k+1} \pi}{\Gamma_{k+2} \pi}
$$
is designed to record the action of $\Mcg[k]$ on the $(k+1)$-st nilpotent quotient $\pi/\Gamma_{k+2} \pi$:
its kernel is $\Mcg[k+1]$.
For every $k\geq 1$, the  \emph{$k$-th Morita homomorphism} 
$$
M_k: \Mcg[k] \longrightarrow H_3\left(\frac{\pi}{\Gamma_{k+1}\pi};\Z\right)
$$
is stronger than $\tau_k$: its kernel is $\Mcg[2k]$ as shown by Heap \cite{Heap}.

The Ptolemy groupoid is a combinatorial object which has arisen 
from Teichm\"uller theory in Penner's work \cite{Penner_decorated,Penner_perturbative}.
In the case of a surface with one boundary component like $\Sigma$,
the objects of the Ptolemy groupoid are decorated  graphs of a certain kind (called ``trivalent bordered fatgraphs'') 
whose edges are colored with elements of $\pi$; 
its morphisms are sequences of elementary moves  between those graphs (called ``Whitehead moves'') modulo some relations \cite{Penner_bordered}. 
For any normal subgroup $N$ of $\Mcg$, the quotient of the Ptolemy groupoid by $N$
offers a combinatorial approach to the group $N$, and this applies notably to the subgroups of the Johnson filtration.
Morita and Penner considered in \cite{MP} the problem of extending the first Johnson homomorphism to the Ptolemy groupoid.
The same kind of problem was further considered for higher Johnson homomorphisms 
and other representations of  the mapping class group in \cite{BKP,ABP}.

Let us state what may be a \emph{groupoid extension} problem in general.
Let $\Gamma$ be a group, and let $K$ be a CW-complex which is a $\K(\Gamma,1) $-space
and whose fundamental group $\pi_1(K,\centerdot)$ is identified with $\Gamma$:\\

\begin{tabular}{ll}
\emph{Given} & 
$\left\{\begin{array}{l}
\hbox{a $\Gamma$-module $A$}\\
\hbox{a crossed homomorphism $\varphi:\Gamma \to A$}
\end{array}\right.$  \\[0.5cm]
\emph{find} & 
$\left\{\begin{array}{l}
\hbox{a $\pi_1^{\operatorname{cell}}(K)$-module $\widetilde{A}$ which contains $A$ }\\
\hbox{a crossed homomorphism $\widetilde{\varphi}:  \pi_1^{\operatorname{cell}}(K) \to \widetilde{A}$}
\end{array}\right.$  \\[0.5cm]
\emph{such that }
&
$\newdir{ >}{{}*!/-5pt/\dir{>}}
\SelectTips{eu}{12}%
\xymatrix{
\pi_1(K,\centerdot) \ar@{^{(}->}[d] \! \! \! \!  \! \! \! \!  & \ar@(ul,dl)[] A \ar@{^{(}->}[d]\\
\pi_1^{\operatorname{cell}}(K)   \! \! \! \!   \! \! \! \!  & \ar@(ul,dl)[] \widetilde{A}
}$
\quad \emph{and} \quad
$\newdir{ >}{{}*!/-5pt/\dir{>}}
\SelectTips{eu}{12}%
\xymatrix{
\pi_1(K,\centerdot)  \ar@{^{(}->}[d]  \ar[r]^-{\varphi} & A \ar@{^{(}->}[d]\\
\pi_1^{\operatorname{cell}}(K) \ar[r]_-{\widetilde{\varphi}}  & \widetilde{A}.
}$ 
\end{tabular}
\vspace{0.2cm}

\noindent
Here, $\pi_1^{\operatorname{cell}}(K)$ denotes the \emph{cellular fundamental groupoid} of $K$,
\ie the category whose objects are vertices of $K$  and whose morphisms are combinatorial paths in $K$ up to combinatorial homotopies.
Of course, solutions to the groupoid extension problem  always exist: 
for instance, by choosing for each vertex $v$ of $K$ a path connecting $v$ to the base vertex $\centerdot$ of $K$,
one easily constructs an extension $\widetilde{\varphi}$ of $\varphi$ with values in $\widetilde{A} := A$.
The groupoid extension problem is pertinent in the following situation:  the group $\Gamma$ is not well understood,
and $\pi_1^{\operatorname{cell}}(K)$ offers a nice combinatorial model in which to embed $\Gamma$.
One then seeks a solution  $\widetilde{\varphi}$ defined by a \emph{canonical} formula:
the simpler the formula, the better the solution $\widetilde{\varphi}$ is.
One may need to enlarge $A$ to some $\widetilde{A}$ to achieve this, but $\widetilde{A}/A$ should not be too big.
From a cohomological viewpoint, the groupoid extension problem   consists in finding a twisted $1$-cocycle 
$$
\widetilde{\varphi}: \left\{\hbox{oriented $1$-cells of $K$} \right\} \longrightarrow A
$$ 
which represents $[\varphi] \in H^1(\Gamma;A) \simeq H^1(K;A)$:
again, the $1$-cocycle $\widetilde{\varphi}$ is desired to be canonical,
which may require taking twisted coefficients in a larger abelian group $\widetilde{A}\supset A$.\\

In this paper, we extend each of Morita's homomorphisms to the Ptolemy groupoid. 
In a first formulation, we define groupoid extensions which we call ``tautological.''
Indeed, there is a nice $3$-dimensional interpretation of the Ptolemy groupoid 
(which the author learnt from Bob Penner) in terms of Pachner moves between triangulations.
Since the original  definition of Morita homomorphisms is based on the bar resolution of groups, 
it is very natural to extend them to the Ptolemy groupoid using the same simplicial approach.
Our ``tautological'' extension $\widetilde{M_k}$ of the $k$-th Morita homomorphism $M_k$ is canonical and combinatorial,
but, in contrast with $M_k$, the target of $\widetilde{M_k}$ has infinite rank.
Thus, in a second refinement, we improve this groupoid extension by decreasing its target to a finitely generated free abelian group. 
For this, we replace groups by their Malcev Lie algebras, and we use a homological construction due to Suslin and Wodzicki \cite{SW}.
More precisely, we need the functorial chain map that they introduced  to relate
the homology of a finitely-generated torsion-free nilpotent group (with $\Q$-coefficients) to the homology of its Malcev Lie algebra.
The resulting groupoid extension of $M_k$ is denoted by $\widetilde{m}_k$,
and it is called ``infinitesimal'' in order to emphasize the use of Lie algebras.

Since Johnson  homomorphisms are determined by Morita homomorphisms in an explicit way \cite{Morita_abelian},
the same constructions can be used to extend the former to the Ptolemy groupoid.
In the abelian case ($k=1$), we recover Morita and Penner's extension of the first Johnson homomorphism \cite{MP}. 
However, it does not seem easy to relate in an explicit way
our groupoid extension $\widetilde{\tau}_k$ of the $k$-th Johnson homomorphism $\tau_k$ 
to the work of Bene, Kawazumi and Penner \cite{BKP}.  
Indeed, the extension $\widetilde{\tau}_k$ is defined ``locally''
and takes values in an abelian group that depends on the $(k+1)$-st nilpotent quotient of $\pi$;
on the contrary, the extension of $\tau_k$ constructed in \cite{BKP} is defined ``globally'' and can be considered as
a $1$-cocycle with non-abelian coefficients  that only depend on $\pi/\Gamma_2 \pi$.

As by-products, extensions of Johnson/Morita homomorphisms to \emph{crossed} homomorphisms 
on the \emph{full} mapping class group are  derived.
Their definition depends on the choice of an object in the Ptolemy groupoid. 
Extensions of Johnson homomorphisms to the full mapping class group can also be found in prior works
by Morita \cite{Morita_extension,Morita_linear}, Perron \cite{Perron} and Kawazumi \cite{Kawazumi}.
Our extensions of Johnson/Morita homomorphisms are similar to those obtained by Day in \cite{Day1,Day2}.
He also used Malcev completions of groups to construct his extensions, but with the techniques of differential topology.
As was the case in the work \cite{Massuyeau} relating Morita's homomorphisms to finite-type invariants of $3$-manifolds,
we use Malcev Lie algebras in the style of Jennings \cite{Jennings} and Quillen \cite{Quillen}:
thus, our approach is purely algebraic and it avoids Lie groups. 
Consequently, our extensions of Johnson/Morita homomorphisms to the mapping class group are purely combinatorial.
Their explicit computation is subordinate to that of the Suslin--Wodzicki chain map.\\

The paper is organized in the following manner:

\tableofcontents

\noindent
\textbf{Acknowledgements.}
The author would like to thank Alex Bene and Bob Penner for useful comments on a previous version of this manuscript.
This work was partially supported by the French ANR research project ANR-08-JCJC-0114-01.

\section{A short review of the Ptolemy groupoid}

\label{sec:review}

In this section, we recall the definition of the Ptolemy groupoid for the bordered surface $\Sigma$. 
We start by reviewing the category of bordered fatgraphs,
whose combinatorics is equivalent to the complex of arc families studied by Penner in \cite{Penner_bordered}.
The category of bordered fatgraphs has been introduced by Godin \cite{Godin},
who adapted constructions and results of Harer \cite{Harer}, 
Penner \cite{Penner_decorated,Penner_perturbative}  and Igusa \cite{Igusa} to the bordered case.
Our exposition is brief: the reader is refered to the paper \cite{Godin} for precise definitions, proofs and further references.
The Ptolemy groupoid is then defined as the cellular fundamental groupoid of a CW-complex 
that realizes the category of $\pi$-marked bordered fatgraphs.

\subsection{The category of bordered fatgraphs}

\label{subsec:fatgraphs}
 
A \emph{fatgraph} is a finite graph $G$ whose vertices $v$ can be of two types:
either $v$ is univalent, or $v$ is at least trivalent and oriented 
(\ie $v$ is equipped with a cyclic ordering of its incident half-edges).
Vertices of the first kind are called \emph{external}, while vertices of the second kind are called \emph{internal}.
An edge is \emph{external} if it is incident to an external vertex, and it is \emph{internal} otherwise.

A fatgraph $G$ can be ``thickened'' to a compact oriented surface $\mathbb{G}$ with boundary.
More precisely, every internal vertex of $G$ is thickened to a disk, 
every internal edge of $G$ is thickened to a band which connects the disk(s) corresponding to its endpoint(s),
every external edge of $G$ is thickened to a band with one ``free'' end, 
and the orientation of the surface $\mathbb{G}\supset G$ should be compatible with the vertex orientations of $G$.
The fatgraph $G$ is \emph{bordered} if there is exactly one external vertex of $G$ on each boundary component of $\mathbb{G}$.
In the sequel, we shall restrict ourselves to bordered fatgraphs $G$ such that $\mathbb{G}$ is homeomorphic to $\Sigma$.
(Therefore our bordered fatgraphs will have exactly one external vertex.) 
An example is drawn on Figure \ref{fig:fatgraph} in genus $g=1$.

\begin{figure}[h]
{\labellist \small \hair 2pt
\pinlabel {$\leadsto$} at  576 171 
\endlabellist
\includegraphics[scale=0.37]{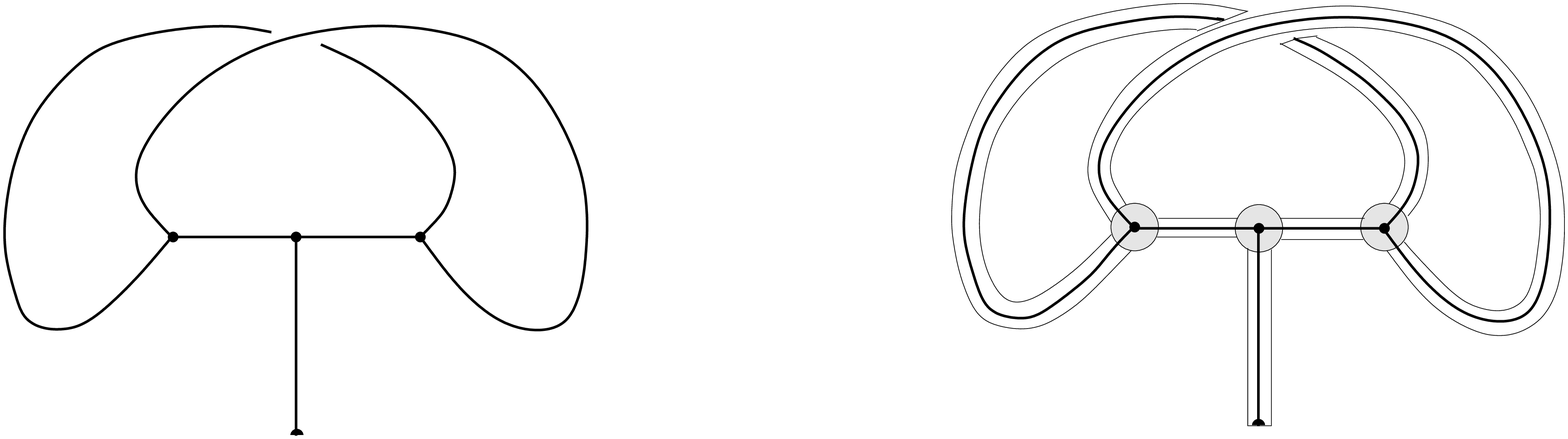}}
\caption{A fatgraph $G$ and its ``thickening'' $\mathbb{G}$.
(On pictures, we agree that the vertex orientation is counterclockwise.)}
\label{fig:fatgraph}
\end{figure}

A \emph{morphism} of bordered fatgraphs $G \to G'$ consists of two bordered fatgraphs $G$ and $G'$,
such that $G'$ is obtained from $G$ by collapsing a disjoint union of contractible subgraphs of $G$ to vertices of $G'$.
(This collapse should respect the vertex orientations and should exclude the external vertex.)
Considering fatgraphs up to isomorphisms, one obtains the category 
$$
\Fat(\Sigma) =: \Fat
$$
of bordered fatgraphs for the surface $\Sigma$. 
This category has finitely many objects and morphisms.

\begin{remark}
The category $\Fat$ only depends on the topological type of the surface $\Sigma$, \ie on its genus $g$.
This category is denoted by $\Fat_{g,1}$ in the paper \cite{Godin}, 
which applies to any compact connected oriented surface with non-empty boundary.
\end{remark}

In the sequel, we fix an arc $I$ in $\partial \Sigma$ which does not meet the base point $\star$.
Let $G$ be a bordered fatgraph and let $F \subset \partial \mathbb{G}$ be the ``free'' end of the band of $\mathbb{G}$ 
corresponding to the external edge of $G$.
A \emph{marking} of $G$ is an isotopy class of orientation-preserving embeddings 
$m:\mathbb{G} \hookrightarrow \Sigma$ such that $m(\mathbb{G})\cap \partial \Sigma= m(F)=I$.
Thus, the complement of $m(\mathbb{G})$ in $\Sigma$ is a disk whose closure is  denoted by $D_m$.
A marking $m$ of $G$ induces a map $\varpi:\{\hbox{oriented edges of $G$}\} \to \pi$.
Indeed, for every oriented edge $e$ of $G$, we can consider the band of $\mathbb{G}$ obtained by ``thickening'' $e$
and, inside this band,  there is a simple proper arc $e^*$ meeting $e$ transversely in a single point.
We orient $e^*$ so that the intersection number
$e^*\cdot e$ is $+1$  in the oriented surface $\mathbb{G}$: see Figure \ref{fig:dual_arc}.
The image $m(e^*)$ of $e^*$ is an arc in $\Sigma$ whose endpoints are on $\partial D_m$:
we connect each of them  to $\star$ by an arc in $D_m$.
The resulting loop in $\Sigma$ based at $\star$ is still denoted by $m(e^*)$
and we set  $\varpi(e):=[m(e^*)]$. Clearly, the map $\varpi$ satisfies the following:
\begin{itemize}
\item For every oriented edge $e$ of $G$, we have $\varpi(e)\cdot \varpi(\overline{e})=1$
where $\overline{e}$ denotes the edge $e$ with opposite orientation;
\item For every internal vertex $v$ of $G$, we have $\varpi(h_1^+) \cdots \varpi(h_k^+)=1$
where $h_1,\dots,h_k$ are the cyclically-ordered half-edges incident to $v$
and $h_1^+,\dots,h_k^+$ denote the corresponding edges which are oriented towards $v$;
\item The image of $\varpi$ generates the group $\pi$;
\item $\varpi$ takes the value $\zeta := [\partial \Sigma]$ on the external edge, 
which is oriented from the external vertex to the internal vertex.
\end{itemize}
Conversely, a map $\varpi$ verifying those four properties is called a \emph{$\pi$-marking} of $G$ in \cite{BKP},
where it is observed that ``markings'' and ``$\pi$-markings'' are equivalent notions.
Therefore, we shall confuse those two notions in the sequel. 

\begin{figure}[h]
\labellist \small \hair 2pt
\pinlabel {$\partial \mathbb{G}$} [r] at 1 3
\pinlabel {$\partial \mathbb{G}$} [r] at 1 219
\pinlabel {$e^*$} [r] at 380 40
\pinlabel {$e$} [b] at 570 115
\pinlabel {\Large $\circlearrowleft$} at 570 50
\pinlabel {\scriptsize $+$}  at 570 50
\endlabellist
\centering
\includegraphics[scale=0.25]{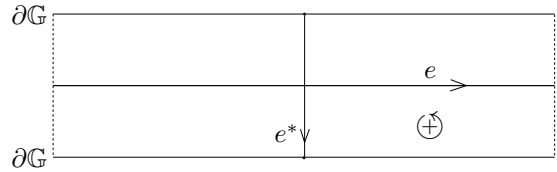}
\caption{The arc $e^*$ in $\mathbb{G}$ dual to an edge $e$ of $G$.}
\label{fig:dual_arc}
\end{figure}

A \emph{morphism} of $\pi$-marked bordered fatgraphs $G \to G'$ is a morphism of bordered fatgraphs
that respects the $\pi$-markings. (In other words, if the set of oriented edges of $G'$ is identified
with a subset of that of $G$, then the $\pi$-marking of $G'$ should be the restriction of that of $G$.) 
Considering $\pi$-marked fatgraphs up to isomorphisms, one obtains the category 
$$
\MFat(\Sigma) =: \MFat
$$
of $\pi$-marked bordered fatgraphs for the surface $\Sigma$. There is an obvious forgetful functor
from $\MFat$ to $\Fat$, which induces a simplicial map
$$
p: | \MFat | \longrightarrow | \Fat |
$$
between their geometric realizations. The mapping class group $\Mcg$ acts freely on $\MFat$ by changing the $\pi$-markings,
and the quotient $| \MFat |/\Mcg$ is isomorphic to $| \Fat |$ via $p$. 

\begin{theorem}[Penner, Harer, Igusa, Godin]
\label{th:contractibility}
The space $| \MFat |$ is contractible. Therefore $|\MFat|$ is the universal cover of $| \Fat |$, 
and $| \Fat |$ is a $\operatorname{K}(\Mcg,1)$-space.
\end{theorem}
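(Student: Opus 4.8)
The plan is to deduce the whole statement from the single assertion that $|\MFat|$ is contractible, and to establish the latter by identifying this geometric realization, $\Mcg$-equivariantly and up to homotopy, with (decorated) Teichm\"uller space $\widetilde{\T}(\Sigma)$, which is homeomorphic to a Euclidean space and hence contractible. The second half of the theorem will then be a formal consequence of a free simplicial group action.

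First I would recall the geometric input. By Penner's decorated Teichm\"uller theory (the ``convex hull construction''), the space $\widetilde{\T}(\Sigma)\cong \T(\Sigma)\times \R_{>0}$ carries an $\Mcg$-invariant ideal cell decomposition whose open cells are indexed by isotopy classes of filling systems of disjoint essential arcs in $\Sigma$, the face relations being given by deletion of arcs. Harer used this (in the punctured case) to compute the virtual cohomological dimension of the mapping class group, Igusa gave a combinatorial treatment, and Godin \cite{Godin} adapted all of this to surfaces with boundary. Poincar\'e--Lefschetz duality turns a filling arc system into its dual spine, which is precisely a bordered fatgraph $G$ embedded in $\Sigma$ up to isotopy --- that is, an object of $\MFat$ --- with trivalent fatgraphs corresponding to top-dimensional cells, and collapsing an edge corresponding to passing to a codimension-one face; thus the nerve of $\MFat$ is identified with the poset of cells ordered by the face relation. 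Since a $\pi$-marked bordered fatgraph has no non-trivial automorphism preserving its marking (here the Dehn--Nielsen faithfulness enters), $\MFat$ is a poset and the cell decomposition is regular. Consequently $|\MFat|$, being the barycentric subdivision of this regular CW structure, is homeomorphic to $\widetilde{\T}(\Sigma)$ and hence contractible. (Alternatively, one invokes Godin's statement, which is exactly that $|\MFat|$ is contractible.)

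For the remaining assertion, the mapping class group $\Mcg$ acts simplicially on $|\MFat|$ by changing the marking, and this action is free on vertices. Since the action preserves the rank of a fatgraph (say, its number of edges), an element fixing a simplex setwise must fix each of its vertices, hence is trivial; so $\Mcg$ acts freely and properly discontinuously on the simplicial complex $|\MFat|$, the quotient is the simplicial complex $|\Fat|$, and $p:|\MFat|\to|\Fat|$ is a regular covering with deck group $\Mcg$. As $|\MFat|$ is contractible it is simply connected, so $p$ is the universal cover; therefore $\pi_1(|\Fat|)\cong\Mcg$ and $\pi_n(|\Fat|)=\pi_n(|\MFat|)=0$ for $n\geq 2$, i.e. $|\Fat|$ is a $\K(\Mcg,1)$-space.

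The main obstacle is entirely the geometric input of the first step: that the filling part of the arc complex of $\Sigma$ --- equivalently decorated Teichm\"uller space, equivalently the bordered fatgraph space --- is contractible. I would not reprove this; it is the substance of \cite{Penner_decorated,Harer,Igusa,Godin}, resting either on Penner's explicit convex-hull homeomorphism onto $\widetilde{\T}(\Sigma)$ or on a Morse-theoretic flow argument on the arc complex. Beyond quoting these references, the only points needing a little care are that the cell decomposition be regular --- so that its barycentric subdivision really is the order complex of $\MFat$ --- and that it is the \emph{marked} version, not the unmarked one, whose realization is contractible; both are handled in \cite{Godin}.
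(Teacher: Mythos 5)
Your proposal is correct and matches the paper's treatment: the paper states this result as a quoted theorem of Penner, Harer, Igusa and Godin with no proof, deferring exactly to the references you invoke for the contractibility of the marked fatgraph complex via decorated Teichm\"uller theory. Your deduction of the universal-cover and $\operatorname{K}(\Mcg,1)$ statements from contractibility plus the free, properly discontinuous $\Mcg$-action is the standard (and correct) formal argument implicit in the paper's statement.
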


There is, furthermore, a cell decomposition on the space $| \Fat |$. For every bordered fatgraph $G$, set
$$
d(G) := \sum_{v} \left(\hbox{valence}(v)-3\right) 
$$
where the sum ranges over all internal vertices $v$ of $G$.  The fatgraph $G$ is said to be \emph{trivalent}
if all its internal vertices are trivalent or, equivalently, if $d(G)=0$.  
If $G$ is not trivalent, then $G$ can be ``decontracted'' to a trivalent fatgraph in various ways, and we denote
$$
\overline{G} \subset | \Fat |
$$
the union of all $d(G)$-dimensional simplices of the form $H_0\to H_1 \to \cdots \to H_{d(G)}=G$ 
where $H_0$ is trivalent and each morphism $H_i \to H_{i+1}$ is a single edge contraction.

\begin{theorem}[Penner, Harer, Godin]
\label{th:cell}
There is a cell decomposition on  $| \Fat |$, 
with one $d$-dimensional cell $\overline{G}$ for each bordered fatgraph $G$ such that $d(G)=d$.
\end{theorem}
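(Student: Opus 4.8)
The plan is to work with the identification $|\Fat|=|N\Fat|$, the geometric realization of the nerve of the category $\Fat$, and to recognize each cell $\overline{G}$ as a polytope assembled from associahedra. First I would record the elementary structure of the morphisms: a contractible subgraph is a tree, so every morphism of $\Fat$ is a composite of \emph{elementary edge contractions}; moreover contracting a single internal edge produces a vertex of valence $\geq 4$, so the trivalent fatgraphs are precisely those that admit no non-identity morphism \emph{into} them. Since bordered fatgraphs of topological type $\Sigma$ are rigid, there is at most one morphism between any two objects (cf. \cite{Godin}), so $\Fat$ may be treated as a poset and $|\Fat|$ as an honest space.

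The heart of the argument is a local analysis around a fixed fatgraph $G$. Let $\Fat_{/G}$ be the full subposet of $\Fat$ consisting of the fatgraphs $H$ that admit a (necessarily unique) morphism $H\to G$; it has $G$ as greatest element. Such a morphism collapses a forest, and each component tree of the forest is collapsed onto a single vertex of $G$; as a tree with three leaves is a corolla, a non-trivial collapse can occur only at the non-trivalent vertices $v_1,\dots,v_k$ of $G$, of valences $n_1,\dots,n_k$. Decontracting $G$ at $v_i$ amounts to replacing $v_i$ by a planar tree with $n_i$ cyclically ordered leaves, and such trees ordered by contraction form the face poset of the associahedron $\mathcal{K}(n_i)$ of dimension $n_i-3$ — the corolla being the top face and the trivalent trees (equivalently, the triangulations of the $n_i$-gon) being the vertices. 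Consequently
$$
\Fat_{/G}\ \cong\ \prod_{i=1}^{k}\bigl(\hbox{face poset of }\mathcal{K}(n_i)\bigr)\ =\ \hbox{face poset of }\Pi_G,\qquad \Pi_G:=\textstyle\prod_{i=1}^{k}\mathcal{K}(n_i),
$$
a polytope of dimension $\sum_i(n_i-3)=d(G)$ whose top face corresponds to $G$ itself, and in which a covering relation is exactly one elementary edge contraction. Hence the $d(G)$-simplices $H_0\to\cdots\to H_{d(G)}=G$ with $H_0$ trivalent are precisely the maximal chains of $\Fat_{/G}$; since this poset is graded with greatest element, every chain lies in a maximal one, so the union of those top simplices is all of $|N\Fat_{/G}|$. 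In other words, $\overline{G}$ is canonically homeomorphic to the barycentric subdivision of $\Pi_G$, hence to a closed $d(G)$-ball whose interior is an open $d(G)$-ball.

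It then remains to glue these cells. Any non-degenerate simplex of $N\Fat$ is a chain $H_0\to\cdots\to H_d$, which lies in $\Fat_{/H_d}$, so $|\Fat|=\bigcup_G\overline{G}$. A point of $|\Fat|$ is interior to a unique simplex $H_0\to\cdots\to H_j$, and recording the target $H_j$ shows the open cells $\interior(\overline{G})$ to be pairwise disjoint. Under $\overline{G}\cong\Pi_G$, a proper face of $\Pi_G$ is a product of proper faces of the $\mathcal{K}(n_i)$, each of which is again a product of associahedra — namely those attached to the internal vertices of the corresponding decontracting trees — so every proper face of $\Pi_G$ is identified with $\overline{G'}$ for a fatgraph $G'$ with a morphism $G'\to G$ and $d(G')<d(G)$. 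Thus $\partial\overline{G}$ is a union of lower-dimensional cells, each closed cell is an embedded polytope, and intersections of closed cells are subcomplexes; this exhibits the $\overline{G}$ as a regular CW decomposition of $|\Fat|$ with one $d$-dimensional cell for each bordered fatgraph $G$ with $d(G)=d$. The delicate point I expect is the local identification itself: checking that the cyclic order at a vertex makes its decontraction poset associahedral, and that these polytopal structures are mutually compatible over all $G$ — everything after that being formal.
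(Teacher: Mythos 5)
The paper offers no proof of Theorem \ref{th:cell}: it is quoted with attribution to Penner, Harer and Godin, and the reader is referred to \cite{Godin}. Your argument is, in substance, the proof found in that literature (the associahedral description of vertex decontractions goes back to Stasheff, Penner and Harer; the poset-theoretic packaging is Igusa's and Godin's), and it is correct. The two points that carry all the weight are exactly the ones you isolate, and both hold: (i) rigidity of \emph{bordered} fatgraphs --- an automorphism must fix the external vertex, hence the external edge, hence, by the cyclic orderings, every half-edge it reaches --- which makes $\Fat$ a poset and its nerve an order complex; this is precisely where the hypothesis of one external vertex per boundary component is indispensable (for closed surfaces one only obtains an orbi-cell decomposition); and (ii) the identification of the down-set of $G$ with the face poset of $\prod_i \mathcal{K}(n_i)$, which works because the half-edges at an internal vertex are pairwise distinguishable, so its decontractions are the subdivisions of a \emph{labelled} $n_i$-gon rather than their quotient by cyclic symmetry, and a face of $\mathcal{K}(n_i)$ is canonically the product of associahedra attached to the internal vertices of the corresponding tree, which gives the compatibility $\partial\overline{G}=\bigcup_{G'\to G}\overline{G'}$ you ask for. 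Two small points deserve an explicit sentence in a write-up: every formal decontraction (a choice of planar trees at the vertices) really is an object of $\Fat$ of the same topological type, since collapsing a forest does not change the thickened surface; and a single internal edge contraction raises $d$ by exactly $1$ (merging vertices of valences $a,b$ yields valence $a+b-2$, and $(a+b-2-3)-(a-3)-(b-3)=1$), which is what identifies the maximal chains of the down-set with the simplices appearing in the paper's definition of $\overline{G}$.
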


\noindent
This cell decomposition can be lifted in a unique way  to the universal cover $|\MFat|$. 
In the sequel, the corresponding CW-complex is simply denoted by $\MFat$.

More generally,  we can consider for any normal subgroup $N$ of $\Mcg$ the quotient category $\MFat/N$ by the action of $N$ \cite{MP,BKP}.
Its geometric realization $|\MFat/N|$ is,  according to Theorem \ref{th:contractibility}, 
a $\operatorname{K}(N,1)$-space and, according to Theorem \ref{th:cell}, it has a canonical cell decomposition.
The corresponding CW-complex is simply denoted by $\MFat/N$.
Note that, for any $\pi$-marked trivalent bordered fatgraph $G$, the fundamental group of $\MFat/N$ based at the vertex $\{G\}$ 
can be identified with the group $N$ in a canonical way. In the sequel, this identification will be denoted by
$$
\xymatrix{
\pi_1\left(\MFat/N,\{G\}\right) \ar@{=}[r]^-G & N.
}
$$

\subsection{The Ptolemy groupoid}

Recall that the cellular fundamental groupoid $\pi_1^{\operatorname{cell}}(K)$ 
of a CW-complex $K$ is defined by the following presentation  \cite{Higgins}: 
generators are oriented $1$-cells $e$ of $K$; relations are of the form $e \cdot \overline{e}$ (for every oriented $1$-cell $e$ of $K$)
or of the form $\partial D$ (for every oriented $2$-cell $D$ of $K$). 
Relations of the first kind are called \emph{involutivity} relations.

By definition, the \emph{Ptolemy groupoid} of the surface $\Sigma$ 
is the cellular fundamental groupoid of the CW-complex $\MFat$.
We denote it by
$$
\Pt := \pi_1^{\operatorname{cell}}\left(\MFat\right).
$$
The $0$-cells of $\MFat$ are $\pi$-marked trivalent bordered fatgraphs. 
The $1$-cells correspond to $\pi$-marked bordered fatgraphs all of whose internal vertices are trivalent, 
except for a single quadrivalent vertex.
Consequently an oriented $1$-cell of  $\MFat$ can be regarded as a \emph{Whitehead move} $W:G \to G'$ 
between two $\pi$-marked trivalent bordered fatgraphs:
$$
\labellist \small \hair 2pt
\pinlabel {$G$} [r] at 0 30
\pinlabel {$G'$} [l] at 376 30
\pinlabel {$\overset{W}{\longrightarrow}$} at 190 30
\endlabellist
\centering
\includegraphics[scale=0.3]{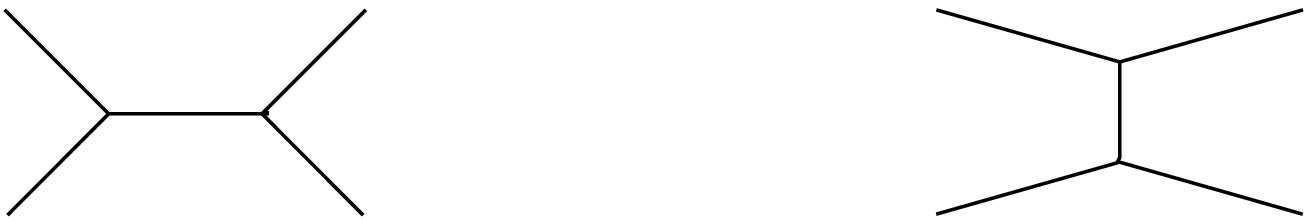}
$$
The $2$-cells of $\MFat$ correspond to $\pi$-marked bordered fatgraphs all of whose internal vertices are trivalent,
except for a single vertex of valence $5$, or, two vertices of valence $4$. 
A $2$-cell of the first kind gives a \emph{pentagon} relation between Whitehead moves, 
and a $2$-cell of the second kind gives a \emph{commutativity} relation, as shown on Figure \ref{fig:relations}.
\begin{figure}
{\labellist \small \hair 2pt
\endlabellist
\includegraphics[scale=0.25]{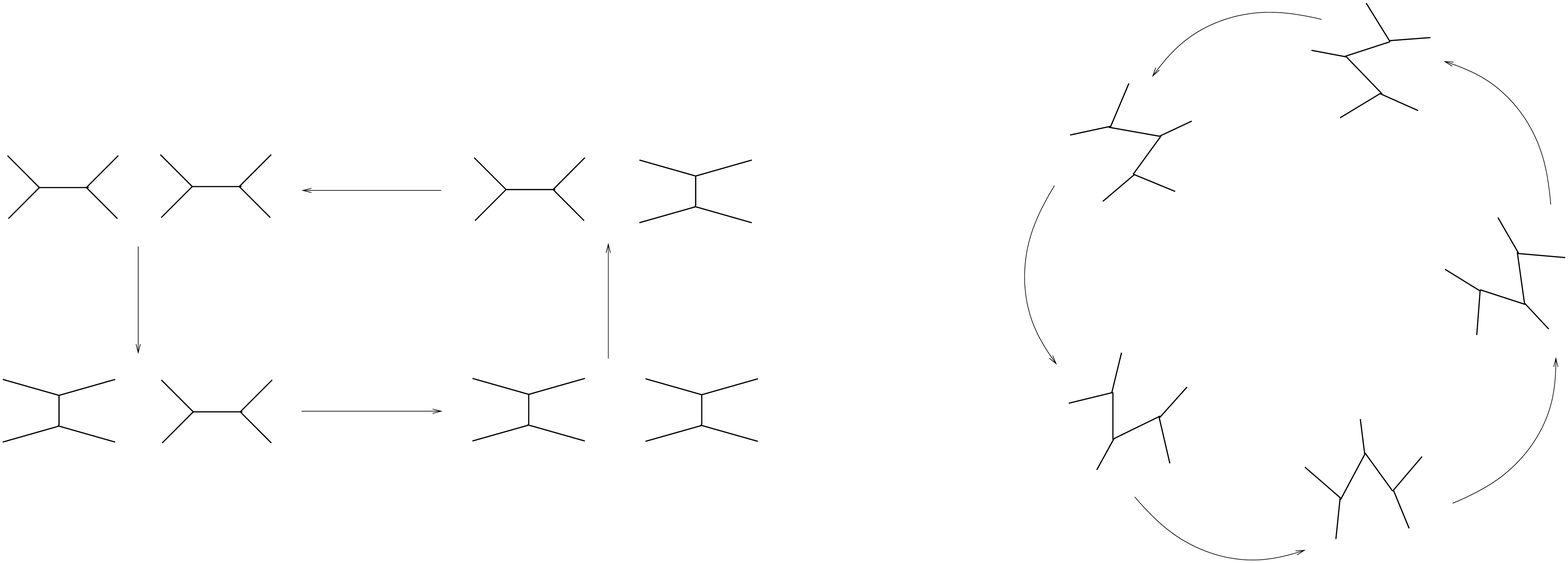}}
\caption{The commutativity and the pentagon relations.}
\label{fig:relations}
\end{figure}
In addition to the relations defined by the $2$-cells,
there are the involutivity relations which, in the groupoid $\Pt$, we write
$$
\labellist \small \hair 2pt
\endlabellist
\centering
\includegraphics[scale=0.3]{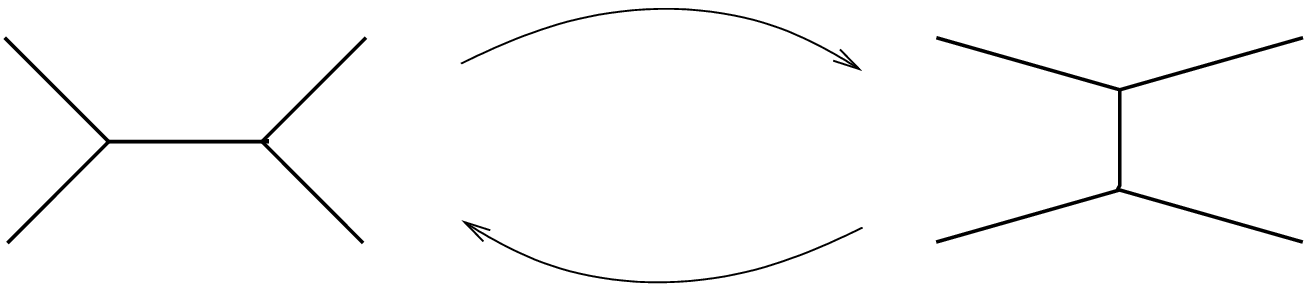}.
$$
Thus, the Ptolemy groupoid consists of finite sequences of Whitehead moves between 
$\pi$-marked trivalent bordered fatgraphs, modulo the involutivity, commutativity and pentagon relations.

\begin{remark}
\label{rem:mcg_to_Pt}
Since the space $\MFat$ is arc-connected, any two $\pi$-marked trivalent bordered fatgraphs  are related
by a finite sequence of Whitehead moves and, since $\MFat$ is contractible, 
this sequence is unique up  to the involutivity, commutativity and pentagon relations. 
In particular, any $\pi$-marked trivalent bordered fatgraph $G$ induces an injective map
$$
\xymatrix{
G: \Mcg\ \ar@{>->}[r] & \Pt
}
$$
which sends any $f \in \Mcg$ to a finite sequence of Whitehead moves relating $G$ to $f(G)$.
\end{remark}

In general, we can associate to any normal subgroup $N$ of $\Mcg$ the cellular fundamental groupoid of the CW-complex $\MFat/N$ 
discussed at the end of \S \ref{subsec:fatgraphs}.
In particular, the \emph{$k$-th Torelli groupoid} of $\Sigma$ is defined, for every $k\geq 1$, as
$$
\Pt/\Mcg[k] := \pi_1^{\operatorname{cell}}\left(\MFat/\Mcg[k]\right)
$$
where $\Mcg[k]$ is the $k$-th term of the Johnson filtration.
The cases $k=0$ and $k=1$ are of special interest. 
We get the \emph{mapping class groupoid} $\Pt/\Mcg$ and the \emph{Torelli groupoid} $\Pt/\Torelli$.
Observe that, in sharp contrast with $\Fat=\MFat/\Mcg$, the category $\MFat/\Torelli$ is not finite. 
To describe its objects, we set 
$$
H:= H_1(\Sigma;\Z).
$$
An \emph{$H$-marking} of a bordered fatgraph $G$ is a map $\eta:\{\hbox{oriented edges of $G$}\} \to H$ 
which is induced by a $\pi$-marking of $G$. This implies the following:
\begin{itemize}
\item For every oriented edge $e$ of $G$, we have $\eta(e) + \eta(\overline{e})=0$;
\item For every internal vertex $v$ of $G$, we have $\eta(h_1^+) + \cdots + \eta(h_k^+)=0$
where $h_1,\dots,h_k$ are the half-edges incident to $v$
and $h_1^+,\dots,h_k^+$ denote the corresponding edges which are oriented towards $v$;
\item The image of $\eta$ generates $H$.
\end{itemize}
(Those three conditions do not suffice to characterize $H$-markings: 
a simple criterion to recognize $H$-markings of $G$ among maps $\{\hbox{oriented edges of $G$}\} \to H$ is given in \cite{BKP}.) 
With that terminology, the category $\MFat/\Torelli$ has $H$-marked bordered fatgraphs for objects.
Thus, $\Pt/\Torelli$ consists of finite sequences of Whitehead moves between $H$-marked trivalent bordered fatgraphs, 
modulo the involutivity, commutativity and pentagon relations.

\section{Tautological extensions of Morita homomorphisms}

\label{sec:tautological}

In this section, we define for every $k\geq 1$ an extension $\widetilde{M}_k$
of the $k$-th Morita homomorphism $M_k$ to the $k$-th Torelli groupoid.
When the Ptolemy groupoid is interpreted in terms of triangulations of the cylinder $\Sigma \times [-1,1]$,
the homomorphism $\widetilde{M}_k$ appears as a natural extension of $M_k$ whose original definition  is simplicial by nature \cite{Morita_abelian}.
Thus, we call  $\widetilde{M}_k$ the ``tautological'' extension of $M_k$.
Our constructions apply to the Ptolemy groupoid, 
so that they also produce extensions of Morita's homomorphisms to the \emph{full} mapping class group.

\subsection{Construction}

We start by recalling Morita's definition of his homomorphisms \cite{Morita_abelian}.
For this, we consider the bar complex of the group $\pi$:
\begin{equation}
\label{eq:bar}
\xymatrix{
\cdots \ar[r] & \Bar_3(\pi) \ar[r]^-{\partial_3} & \Bar_2(\pi) \ar[r]^-{\partial_2} 
& \Bar_1(\pi) \ar[r]^-{\partial_1} & \Bar_0(\pi) \ar[r] & 0
}
\end{equation}
where $\Bar_n(\pi) := \Z \cdot \pi^{\times n}$ is the free abelian group generated by $n$-uplets 
$\left(\gp{g}_1| \cdots | \gp{g}_n  \right)$ of elements of $\pi$ and
$$
\partial_n\left(\gp{g}_1| \cdots | \gp{g}_n  \right) := 
(\gp{g}_2 | \cdots | \gp{g}_n)
+ \sum_{i=1}^{n-1} (-1)^i \cdot (\gp{g}_1 | \cdots |\gp{g}_i \gp{g}_{i+1}| \cdots |\gp{g}_n)
+ (-1)^n \cdot (\gp{g}_1 |\cdots| \gp{g}_{n-1}). 
$$
Let $\zeta:=[\partial \Sigma] \in \pi$ be the homotopy class of the boundary curve.
Since $\zeta$ is trivial in $\pi/[\pi,\pi] \simeq H_1(\pi)$, 
we can find a $2$-chain $Z \in \Bar_2(\pi)$ such that $\partial_2(Z)=-(\zeta)$.
For any $f\in \Mcg(\Sigma)$, 
the $2$-chain $Z-f_*(Z)$ is a cycle since $f_*(\zeta)=\zeta$. Then, knowing that $H_2(\pi)=0$,
we can find a $3$-chain $T_f \in \Bar_3(\pi)$ such that $\partial_3(T_f)=Z-f_*(Z)$.
If we now assume that $f \in \Mcg[k]$, the reduction of $T_f$ to $\pi/\Gamma_{k+1} \pi$ is a cycle,
and we can set
$$
M_k(f) := \left[T_f \mod \Gamma_{k+1} \pi\right] \ \in H_3\left(\pi/\Gamma_{k+1} \pi\right).
$$
It can be checked that this homology class does not depend 
on the choice of $T_f$ nor on the choice of $Z$, 
and that $M_k(f_1 \circ f_2)=M_k(f_1) + M_k(f_2)$ for all $f_1,f_2 \in \Mcg[k]$: see \cite{Morita_abelian}.

\begin{definition}
For all $k\geq 1$, the map $M_k:\Mcg[k] \to H_3\left(\pi/\Gamma_{k+1} \pi\right)$
is called the \emph{$k$-th Morita homomorphism.}
\end{definition}

Here is a first observation to extend Morita's homomorphisms to the Ptolemy groupoid.

\begin{lemma}
\label{lem:Z_G}
Any $\pi$-marked trivalent bordered fatgraph $G$ defines a $2$-chain
$Z_G \in \Bar_2(\pi)$ with boundary $-(\zeta) \in \Bar_1(\pi)$.
\end{lemma}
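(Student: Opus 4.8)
The plan is to produce $Z_G$ from the combinatorial structure of the trivalent fatgraph $G$ together with its $\pi$-marking $\varpi$, mimicking at the level of a single fatgraph the construction of the $2$-chain $Z$ used in Morita's definition. The starting observation is that a trivalent bordered fatgraph for $\Sigma$ has a spanning tree, and contracting that tree exhibits $\pi=\pi_1(\Sigma,\star)$ as a free group on generators $x_1,\dots,x_{2g}$ indexed by the non-tree edges, with $\varpi$ recording exactly these generators (up to the tree-path corrections). The boundary word $\zeta=[\partial\Sigma]$ is then read off as an explicit product of the $x_i^{\pm1}$ by walking once around $\partial\mathbb{G}$; concretely, each internal trivalent vertex $v$ contributes the relation $\varpi(h_1^+)\varpi(h_2^+)\varpi(h_3^+)=1$ from the list of properties of a $\pi$-marking, and chaining these around the single boundary circle expresses $\zeta$ as a word $w$ in the edge-values.

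First I would fix, once and for all, a systematic way to turn a factorization of the identity (or of a given element) in a free group into a bar $2$-chain whose boundary is prescribed: if $\gp{a}\gp{b}=\gp{c}$ in $\pi$ then $\partial_2(\gp{a}|\gp{b}) = (\gp{b}) - (\gp{ab}) + (\gp{a}) = (\gp{b}) - (\gp{c}) + (\gp{a})$, so a bracketing of a long product into binary products yields, by telescoping, a $2$-chain $Z_G$ with $\partial_2(Z_G) = \bigl(\text{sum of letters}\bigr) - (\zeta)$, after which one cancels the single-letter terms using the vertex relations (each of which again is a boundary of an explicit $2$-chain) and the involutivity $\varpi(e)\varpi(\overline e)=1$. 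Assembling these local $2$-chains — one binary-bracketing chain per internal vertex, glued along the boundary walk — gives a global $2$-chain $Z_G\in\Bar_2(\pi)$, and the computation of its boundary reduces, by design, to $-(\zeta)$ because all intermediate one-simplices telescope away. I would then just exhibit this $Z_G$ explicitly for the genus-$1$ example of Figure~\ref{fig:fatgraph} to make the recipe concrete, and note that it manifestly depends only on the pair $(G,\varpi)$.

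The main obstacle is bookkeeping rather than conceptual: one must make the gluing of the per-vertex $2$-chains along the boundary cycle precise enough that the telescoping of boundaries is unambiguous, in particular handling the base-point conventions (the arcs in $D_m$ joining endpoints of $m(e^*)$ to $\star$) and the orientation of the walk around $\partial\mathbb{G}$ so that the signs in $\partial_2$ come out right and the residual terms are exactly $-(\zeta)$ and not, say, $-(\zeta^{-1})$ or $-(\zeta)$ plus leftover letters. Concretely I would (i) pick the orientation of $\partial\mathbb{G}$ induced by that of $\mathbb{G}$, (ii) list the internal vertices $v_1,\dots,v_{n}$ in the cyclic order in which the boundary walk first meets them, (iii) at each $v_j$ write the length-$3$ relation as a height-$2$ binary tree giving a $2$-chain $Y_j$ with $\partial_2 Y_j$ equal to an alternating sum of the three incident oriented-edge values minus $1=(\text{empty word})$, and (iv) set $Z_G:=\sum_j Y_j$ plus correction $2$-chains coming from the involutivity relations $\varpi(e)\varpi(\overline e)=1$ needed to splice consecutive vertices; a short induction on $n$ along the boundary walk then shows $\partial_2(Z_G)=-(\zeta)$. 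Finally I would remark that any two choices of binary bracketings differ by boundaries of explicit $3$-chains, so $Z_G$ is canonical up to the ambiguity already present in Morita's $Z$, which is all that is needed downstream.
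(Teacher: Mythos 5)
Your construction is genuinely different from the paper's, and in the form you commit to in steps (i)--(iv) it does not yet prove the lemma. Take the version where each internal vertex $v_j$, with incident edge-values $\gp{a},\gp{b},\gp{c}$ oriented towards $v_j$ and $\gp{a}\gp{b}\gp{c}=1$, contributes $Y_j=(\gp{a}|\gp{b})+(\gp{ab}|\gp{c})$. Then $\partial_2 Y_j=(\gp{a})+(\gp{b})+(\gp{c})-(1)$, so summing over the $n$ internal vertices and subtracting one involutivity corrector $(\gp{u}_e|\gp{u}_e^{-1})$ per internal edge leaves boundary $+(\zeta)+(E-n)\cdot(1)$: the external edge's single half-edge survives with the \emph{wrong} sign, and there is a residue of copies of $(1)$. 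Both are repairable (negate the whole chain and add $(E-n)\cdot(1|1)$, say), but this is precisely the sign bookkeeping you defer to ``a short induction,'' and that induction \emph{is} the content of the lemma; as written the proof is not complete. The spanning-tree discussion in your first paragraph also plays no role in the end and introduces a choice that would break canonicity if it did.

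The paper's route is much more economical and, crucially, \emph{local}: using the edge orientation given by the first boundary passage, it sets $Z_G:=\sum_i\varepsilon(i)\cdot(\gp{l}(i)|\gp{r}(i))$ with exactly one bar $2$-simplex per internal vertex, and the boundary computation is a one-line cancellation over half-edges, with only the external half-edge surviving as $-(\zeta)$. (Topologically, $-Z_G$ is the image of the fundamental class of the dual one-vertex triangulation $\Sigma_G$ under the classifying map; each internal vertex is a triangle.) This locality is not cosmetic: Lemma~\ref{lem:M} needs $Z_G-Z_{G'}$ for a Whitehead move to be supported on the two vertices involved, so that it bounds a \emph{single} $3$-simplex $T_W$. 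Your $Z_G$ is assembled from a global boundary walk with bracketing and splicing choices, so $Z_G-Z_{G'}$ would spread over many terms and the downstream construction of $\widetilde{M}$ would have to be redone; and since the extension lives at the chain level (in $\Bar_3(\pi)/\Img(\partial_4)$, not just in $H_3$), ``canonical up to boundaries'' is not, contrary to your closing remark, all that is needed.
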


\begin{proof}
There is a preferred orientation on the edges of $G$.
Indeed, if $\mathbb{G}$ is the oriented surface defined by $G$ (as recalled in \S \ref{sec:review})
and if one follows the oriented boundary of $\mathbb{G}$, starting from the external vertex,
then each edge $e$ of $G$ is approached two times
and the first passage gives the preferred orientation of $e$:

\begin{equation}
\label{eq:edge_orientation}
\labellist \small \hair 2pt
\pinlabel {first passage} [t] at 350 0
\pinlabel {second passage} [b] at 350 236
\pinlabel {$e$} [b] at 451 122
\pinlabel {$\partial \mathbb{G}$} [r] at 1 3
\pinlabel {$\partial \mathbb{G}$} [r] at 1 219
\endlabellist
\centering
\includegraphics[scale=0.18]{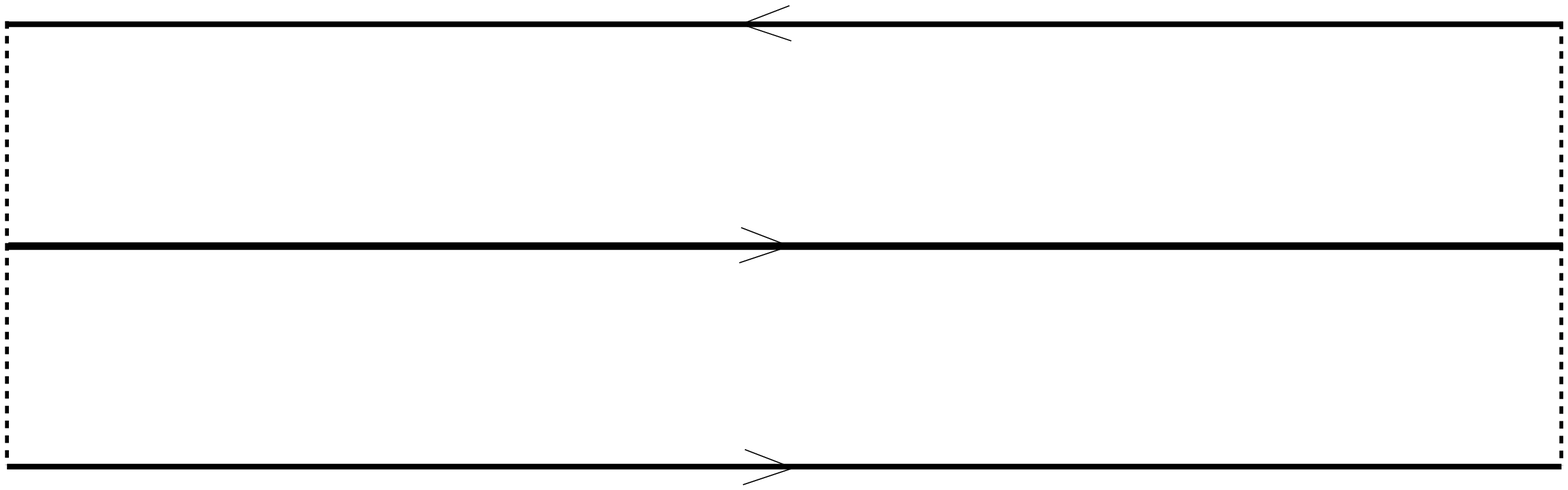}
\end{equation}\\[-0.4cm]

\noindent
The $\pi$-marking $\varpi$ of $G$ defines a $2$-chain in the bar complex of $\pi$ by the sum
$$
Z_G := \sum_{i} \varepsilon(i) \cdot \big(\gp{l}(i)\big\vert \gp{r}(i)\big) \ \in\Bar_2(\pi)
$$
which is taken over all internal vertices $i$ of $G$.
Here, $\varepsilon(i) \in \{+1,-1\}$, $\gp{l}(i) \in \pi$ and $\gp{r}(i) \in \pi$ 
depend on the orientations of the half-edges and the values of $\varpi$ around $i$:
\begin{equation}
\label{eq:Z_G}
{\labellist \small \hair 2pt
\pinlabel {$\gp{l}(i)$} [rt] at 16 85
\pinlabel {$\gp{r}(i)$} [lt] at 75 85
\pinlabel {$\gp{l}(i)$} [rb] at 491 16
\pinlabel {$\gp{r}(i)$} [lb] at 550 16
\pinlabel {$\varepsilon(i):=+1$} [t] at 45 -4
\pinlabel {$\varepsilon(i):=-1$} [t] at 523 -4
\pinlabel {or} at 305 47
\endlabellist
\includegraphics[scale=0.4]{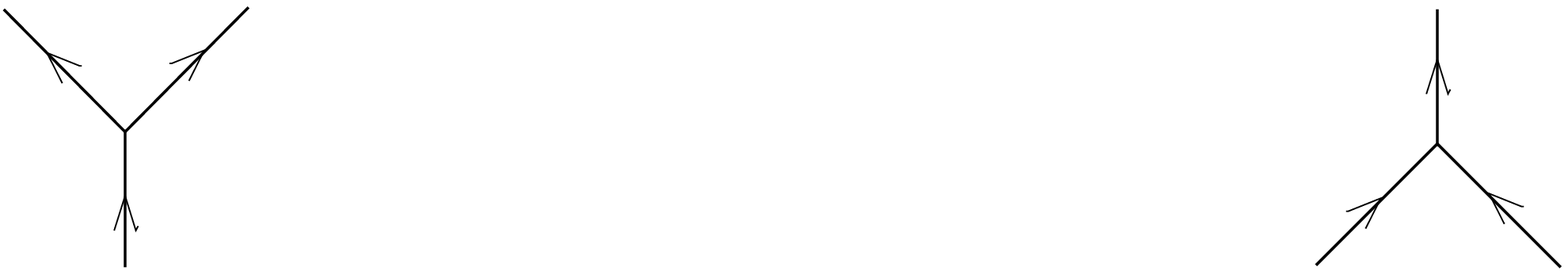}}
\end{equation}\\[-0.4cm]

\noindent
Thus, the contribution of an internal vertex $i$ to $\partial_2(Z_G)$
is $\gp{r}(i) -\gp{l}(i)\gp{r}(i) +\gp{l}(i)$ if $\varepsilon(i)=+1$, 
and it is $-\gp{r}(i) +\gp{l}(i)\gp{r}(i) -\gp{l}(i)$ if $\varepsilon(i)=-1$.
Therefore, we have
$$
\partial_2(Z_G) = \sum_{h} \epsilon(h) \cdot \varpi(h^+)
$$
where the sum is taken over all half-edges $h$ of $G$ that are incident to an internal vertex,
$\epsilon(h) \in \{+1,-1\}$ is positive if and only if $h$ is outgoing,
and $\varpi(h^+) \in \pi$ is the value assigned by $\varpi$ to the edge $h^+$ containing $h$. 
All  terms of $\partial_2(Z_G)$ vanish by pairs except for the half-edge close to the external vertex, 
which is incoming and  colored by $\zeta\in \pi$.
\end{proof}

Here is our second  observation to extend Morita's homomorphisms to the Ptolemy groupoid.

\begin{lemma}
\label{lem:M}
Any Whitehead move $W:G \to G'$ between $\pi$-marked trivalent bordered fatgraphs  
defines a $3$-chain $T_W \in \Bar_3(\pi)$ with boundary $Z_G-Z_{G'} \in \Bar_2(\pi)$. 
\end{lemma}

\begin{proof}
Consider the $\pi$-marked bordered fatgraph $Q$ with a single $4$-valent vertex $q$ that corresponds to the Whitehead move $W$.
When one follows $\partial \mathbb{Q}$ starting from the external vertex of $Q$,
the order $0<1<2<3$ into which the $4$ sectors of $q$ are approached can be (up to a cyclic permutation) of $(4!)/4=6$ types.
Thus, the Whitehead move $W$ can be of $6\cdot 2=12$ types. We set 
$$
T_W := s \cdot \left(\gp{k}|\gp{h}|\gp{g}\right) \ \in \Bar_3(\pi)
$$
where the values of $s \in \{+1,-1\}$ and $\gp{g},\gp{h},\gp{k} \in \pi$ are shown on Figure \ref{fig:T_W} for each  type. 
In each case, one can check by a simple computation that $\partial_3(T_W)=Z_G-Z_{G'}$.
\end{proof}

\begin{figure}[h]
{\labellist \small \hair 0pt
\pinlabel {$q=$ \ } [r] at 0 551
\pinlabel {$q=$ \ } [r] at 0 447
\pinlabel {$q=$ \ } [r] at 0 344
\pinlabel {$q=$ \ } [r] at 0 240
\pinlabel {$q=$ \ } [r] at 0 135
\pinlabel {$q=$ \ } [r] at 0 31
\pinlabel {$1$}  at 10 551
\pinlabel {$1$}  at 10 447
\pinlabel {$2$}  at 10 344
\pinlabel {$2$}  at 10 240
\pinlabel {$3$}  at 10 135
\pinlabel {$3$}  at 10 31
\pinlabel {$2$}  at 53 576
\pinlabel {$3$}  at 53 472
\pinlabel {$1$}  at 53 369
\pinlabel {$3$}  at 53 265
\pinlabel {$1$}  at 53 160
\pinlabel {$2$}  at 53 56
\pinlabel {$0$}  at 101 551
\pinlabel {$0$}  at 101 447
\pinlabel {$0$}  at 101 344
\pinlabel {$0$}  at 101 240
\pinlabel {$0$}  at 101 135
\pinlabel {$0$}  at 101 31
\pinlabel {$3$}  at 53 526
\pinlabel {$2$}  at 53 422
\pinlabel {$3$}  at 53 319
\pinlabel {$1$}  at 53 215
\pinlabel {$2$}  at 53 110
\pinlabel {$1$}  at 53 6
\pinlabel {$\displaystyle{\mathop{\longrightarrow}^{W}_{s := +1}}$}  at 380 576
\pinlabel {$\displaystyle{\mathop{\longrightarrow}^{W}_{s := -1}}$}  at 380 472
\pinlabel {$\displaystyle{\mathop{\longrightarrow}^{W}_{s := -1}}$}  at 380 369
\pinlabel {$\displaystyle{\mathop{\longrightarrow}^{W}_{s := +1}}$}  at 380 265
\pinlabel {$\displaystyle{\mathop{\longrightarrow}^{W}_{s := +1}}$}  at 380 160
\pinlabel {$\displaystyle{\mathop{\longrightarrow}^{W}_{s := -1}}$}  at 380 56
\pinlabel {$\displaystyle{\mathop{\longleftarrow}_{W}^{s := -1}}$}  at 380 526
\pinlabel {$\displaystyle{\mathop{\longleftarrow}_{W}^{s := +1}}$}  at 380 422
\pinlabel {$\displaystyle{\mathop{\longleftarrow}_{W}^{s := +1}}$}  at 380 319
\pinlabel {$\displaystyle{\mathop{\longleftarrow}_{W}^{s := -1}}$}  at 380 215
\pinlabel {$\displaystyle{\mathop{\longleftarrow}_{W}^{s := -1}}$}  at 380 110
\pinlabel {$\displaystyle{\mathop{\longleftarrow}_{W}^{s := +1}}$}  at 380 6
\pinlabel {$\gp{g}$} [r] at 480 551
\pinlabel {$\gp{h}$} [bl] at 244 572
\pinlabel {$\gp{k}$} [b] at 275 558 
\pinlabel {$\gp{g}$} [r] at 478 446
\pinlabel {$\gp{h}$} [tl] at 244 429
\pinlabel {$\gp{k}$} [b] at 277 454
\pinlabel {$\gp{g}$} [br] at 311 361
\pinlabel {$\gp{h}$} [bl] at 237 367
\pinlabel {$\gp{k}$} [br] at 233 325
\pinlabel {$\gp{g}$} [tr] at 312 219
\pinlabel {$\gp{h}$} [tl] at 240 219
\pinlabel {$\gp{k}$} [bl] at 241 263
\pinlabel {$\gp{g}$} [tl] at 325 155
\pinlabel {$\gp{h}$} [b] at 276 141
\pinlabel {$\gp{k}$} [tl] at 243 118
\pinlabel {$\gp{g}$} [tr] at 309 13
\pinlabel {$\gp{h}$} [b] at 277 36
\pinlabel {$\gp{k}$} [bl] at 243 50
\endlabellist
\includegraphics[scale=0.6]{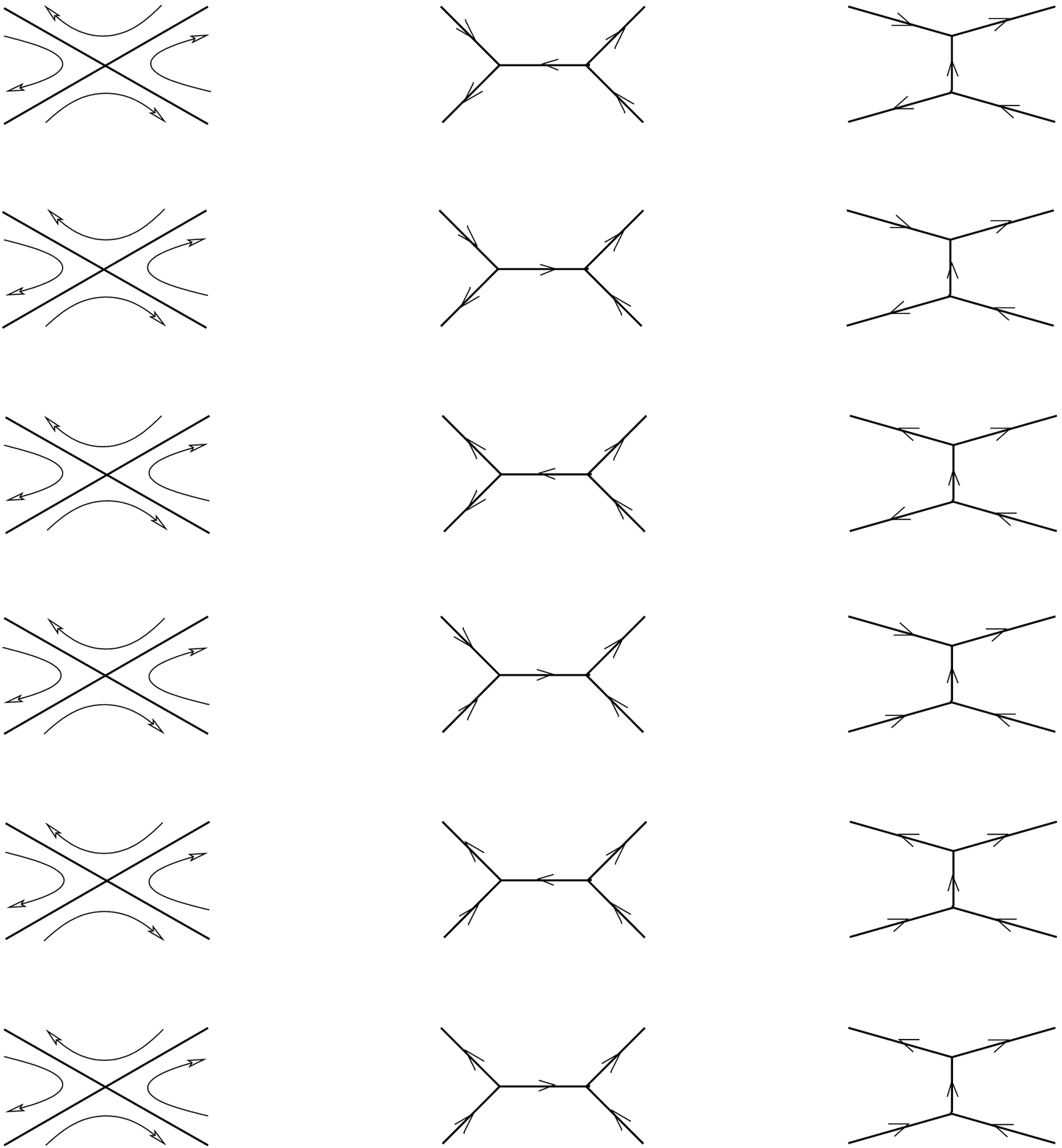}}
\caption{Values for the definition of the $3$-chain $T_W$.}
\label{fig:T_W}
\end{figure}

We can now define our ``tautological'' extensions of Morita's homomorphisms to  $\Pt$.

\begin{theorem}
\label{th:tautological}
There exists a unique groupoid homomorphism
$$
\widetilde{M}: \Pt \longrightarrow \Bar_3(\pi)/\Img(\partial_4)
$$
defined by $W \mapsto T_W$  on each Whitehead move $W$, and $\widetilde{M}$ is $\Mcg$-equivariant.
Moreover, the groupoid homomorphism $\widetilde{M}_k$, defined for all $k\geq 1$ by the commutative diagram
$$
\xymatrix{
\Pt \ar@{->>}[d]  \ar[r]^-{\widetilde{M}}  & \frac{\Bar_3(\pi)}{\Img(\partial_4)} \ar@{->>}[d]\\
\Pt/\Mcg[k] \ar@{-->}[r]_-{\widetilde{M}_k} & \frac{\Bar_3(\pi/\Gamma_{k+1} \pi)}{\Img(\partial_4)},
}
$$
is a groupoid extension of $M_k:\Mcg[k] \to H_3\left(\pi/\Gamma_{k+1} \pi\right)$.
\end{theorem}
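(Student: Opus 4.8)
The plan is to build $\widetilde{M}$ from the standard presentation of the cellular fundamental groupoid $\Pt = \pi_1^{\operatorname{cell}}(\MFat)$: generators are the oriented $1$-cells, \ie the Whitehead moves, and relations are the involutivity relations $W\cdot\overline{W}$ together with the pentagon and commutativity relations carried by the $2$-cells. I would set $\widetilde{M}(W):=T_W$ on each generator and then check that every relation is killed in $\Bar_3(\pi)/\Img(\partial_4)$. The decisive point is that $\pi=\pi_1(\Sigma,\star)$ is a free group, so $H_2(\pi)=H_3(\pi)=0$; in particular $\Ker(\partial_3)=\Img(\partial_4)$, hence \emph{every $3$-cycle vanishes in $\Bar_3(\pi)/\Img(\partial_4)$}. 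For an involutivity relation, Lemma~\ref{lem:M} gives $\partial_3(T_W+T_{\overline{W}})=(Z_G-Z_{G'})+(Z_{G'}-Z_G)=0$; for a pentagon or a commutativity relation, the boundary of the relevant $2$-cell is a closed edge-loop of Whitehead moves based at a single trivalent bordered fatgraph $G$, so by Lemma~\ref{lem:M} the associated alternating sum of the $T_{W_j}$'s has a boundary that telescopes to $Z_G-Z_G=0$. In every case the image of the relation is a $3$-cycle, hence $0$. This produces $\widetilde{M}$, and uniqueness is automatic since the Whitehead moves generate $\Pt$.

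For the $\Mcg$-equivariance, I would note that for $f\in\Mcg$ the automorphism $f_*$ of $\pi$ induces a chain map on $\Bar_\bullet(\pi)$ preserving $\Img(\partial_4)$, so both $\gamma\mapsto \widetilde{M}(f_*\gamma)$ and $\gamma\mapsto f_*\widetilde{M}(\gamma)$ are groupoid homomorphisms $\Pt\to\Bar_3(\pi)/\Img(\partial_4)$ and it is enough to compare them on a single Whitehead move. Since $\Mcg$ acts on $\MFat$ by postcomposing $\pi$-markings with $f_*$, the move $f_*W$ has the same type as $W$ in the sense of Figure~\ref{fig:T_W}, with its $\pi$-labels $\gp{g},\gp{h},\gp{k}$ replaced by $f_*\gp{g},f_*\gp{h},f_*\gp{k}$; hence $T_{f_*W}=f_*(T_W)$, and the equivariance follows. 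The same observation will also be used in the form $Z_{f(G)}=f_*(Z_G)$.

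Next I would descend to the $k$-th Torelli groupoid. The surjection $\pi\twoheadrightarrow\pi/\Gamma_{k+1}\pi$ induces a group homomorphism $\rho_k\colon \Bar_3(\pi)/\Img(\partial_4)\to\Bar_3(\pi/\Gamma_{k+1}\pi)/\Img(\partial_4)$, so $\rho_k\circ\widetilde{M}$ is again a groupoid homomorphism out of $\Pt$. Because every $f\in\Mcg[k]$ acts trivially on $\pi/\Gamma_{k+1}\pi$, it acts trivially on the target of $\rho_k$, and combining this with the $\Mcg$-equivariance just established shows that $\rho_k\circ\widetilde{M}$ takes the same value on a Whitehead move $W$ and on each of its $\Mcg[k]$-translates $f_*W$. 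Since $\Pt/\Mcg[k]=\pi_1^{\operatorname{cell}}(\MFat/\Mcg[k])$ is presented by the $\Mcg[k]$-orbits of the cells of $\MFat$, the homomorphism $\rho_k\circ\widetilde{M}$ factors uniquely through $\Pt\twoheadrightarrow\Pt/\Mcg[k]$, which defines $\widetilde{M}_k$ and gives the commutativity of the square.

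Finally I would check that $\widetilde{M}_k$ extends $M_k$. Fix a $\pi$-marked trivalent bordered fatgraph $G$ and the induced identification $\pi_1(\MFat/\Mcg[k],\{G\})=\Mcg[k]$. For $f\in\Mcg[k]$ one has $\{f(G)\}=\{G\}$ in $\MFat/\Mcg[k]$, so $f$ corresponds to a loop that lifts to a path of Whitehead moves $G=G_0\to G_1\to\cdots\to G_n=f(G)$ in $\MFat$; writing $T_f$ for the corresponding alternating sum of the $T_{W_j}$'s in $\Bar_3(\pi)$, the commutative square gives $\widetilde{M}_k(f)=\left[T_f \mod \Gamma_{k+1}\pi\right]$. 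By Lemma~\ref{lem:M} and telescoping, $\partial_3(T_f)=Z_G-Z_{f(G)}=Z_G-f_*(Z_G)$, while $\partial_2(Z_G)=-(\zeta)$ by Lemma~\ref{lem:Z_G}; thus the pair $(Z:=Z_G,\,T_f)$ is an admissible choice in Morita's construction of $M_k$, so $M_k(f)=\left[T_f \mod \Gamma_{k+1}\pi\right]$ as an element of $H_3(\pi/\Gamma_{k+1}\pi)$, the independence of the choices being Morita's theorem \cite{Morita_abelian}. Moreover $f_*(Z_G)\equiv Z_G \mod \Gamma_{k+1}\pi$, so $T_f$ reduces to a $3$-cycle in $\Bar_3(\pi/\Gamma_{k+1}\pi)$; hence $\widetilde{M}_k(f)$ in fact lies in the subgroup $H_3(\pi/\Gamma_{k+1}\pi)=\Ker(\partial_3)/\Img(\partial_4)$ and equals $M_k(f)$, and this value does not depend on the chosen basepoint $G$ since $\widetilde{M}_k$ has abelian target and is therefore conjugation-invariant. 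The only genuinely delicate points I anticipate are of a bookkeeping nature: fixing the conventions for the $\Mcg$-action on $\pi$-markings so that $T_{f_*W}=f_*(T_W)$ and $Z_{f(G)}=f_*(Z_G)$ hold exactly, and making sure the telescoped chain $T_f$ is matched with a bona fide instance of Morita's original construction.
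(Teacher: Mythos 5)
Your proposal is correct and follows essentially the same route as the paper: well-definedness via the telescoping boundary $\partial_3(\sum_j T_{W_j})=Z_{G_1}-Z_{G_{r+1}}$ together with $H_3(\pi)=0$, descent to $\Pt/\Mcg[k]$ by $\Mcg$-equivariance, and the identification with $M_k$ by observing that $(Z_G,\,\sum_j T_{W_j})$ is an admissible choice in Morita's construction. The extra details you supply (checking each relation type separately, the basepoint-independence remark) are fine but not needed beyond what the paper's argument already covers.
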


\begin{proof}
For any sequence of Whitehead moves $S=(G_1 \overset{W_1}{\to} G_2 \overset{W_2}{\to} \cdots \overset{W_r}{\to} G_{r+1})$
among $\pi$-marked trivalent bordered fatgraphs, 
the $3$-chain $T_{W_1} + T_{W_2} + \cdots + T_{W_r}$ has boundary $Z_{G_1} - Z_{G_{r+1}}$.
Since $H_3(\pi)$ is trivial, it follows that the class of this $3$-chain modulo $\Img(\partial_4)$
is  determined by the initial and final points of $S$. 
Therefore, the pentagon and commutativity relations are automatically satisfied,
so that the groupoid map $\widetilde{M}$ is well-defined.
The $\Mcg$-equivariance of $\widetilde{M}$ is easily checked.

Thus, the map $\widetilde{M}$ induces a groupoid map 
$\widetilde{M}_k:\Pt/\Mcg[k] \to \Bar_3(\pi/\Gamma_{k+1} \pi)/\Img(\partial_4)$ for every $k \geq 1$.
To check that $\widetilde{M}_k$ is an extension of  $M_k$,
let $f\in \Mcg[k]$ and represent $f$ as a sequence of Whitehead moves 
$(G=G_1 \overset{W_1}{\to} G_2 \overset{W_2}{\to} \cdots \overset{W_r}{\to} G_{r+1}=f(G))$. We have
$$
\partial_3\left(T_{W_1} + T_{W_2} + \cdots + T_{W_r}\right)
= Z_{G_1} - Z_{G_{r+1}} = Z_{G} - f_*\left( Z_{G} \right)
$$
so, by definition of $M_k$, we have 
\begin{eqnarray*}
\hspace{2cm} M_k(f) &=& \left[T_{W_1} + T_{W_2} + \cdots + T_{W_r} \mod \Gamma_{k+1} \pi\right]\\
&=& \widetilde{M}_k\left((G_1 \overset{W_1}{\to} G_2 \overset{W_2}{\to} \cdots \overset{W_r}{\to} G_{r+1}) \mod \Mcg[k]\right).  
\end{eqnarray*}
Therefore, the following square is commutative:
\begin{equation}
\label{eq:tautological_extension}
\newdir{ >}{{}*!/-5pt/\dir{>}}
\xymatrix{
\pi_1\left(\MFat/\Mcg[k],\{G\}\right) \ar@{^{(}->}[d] \ar@{=}[r]^-G & \Mcg[k] \ar[r]^-{M_k} & H_3\left(\pi/\Gamma_{k+1}\pi\right) \ar@{^{(}->}[d] \\
\pi_1^{\operatorname{cell}}\left(\MFat/\Mcg[k]\right) \ar@{=}[r]  & \Pt/\Mcg[k] \ar[r]_-{\widetilde{M}_k}
& \frac{\Bar_3(\pi/\Gamma_{k+1} \pi)}{\Img(\partial_4)}
} 
\end{equation}
We conclude that $\widetilde{M}_k$ is a groupoid extension of $M_k$. More precisely,
the groupoid extension problem as it is formulated in the Introduction, with 
$$
\Gamma :=\Mcg[k], \ K:= \MFat/\Mcg[k]
\quad \hbox{and} \quad
A:=H_3(\pi/\Gamma_{k+1}\pi), \  \varphi := M_k, 
$$
has for solution
$$
\widetilde{A} := \Bar_3(\pi/\Gamma_{k+1} \pi)/\Img(\partial_4), \quad \widetilde{\varphi} := \widetilde{M}_k.
$$
In this situation, the $\Gamma$-action on $A$ and the $\pi_1^{\operatorname{cell}}(K)$-action on $\widetilde{A}$ are trivial.
\end{proof}

\begin{remark}
\label{rem:infinite}
It should be noticed that the target of $\widetilde{M}_k$ is a free abelian group of infinite-rank. 
Indeed, we have the following exact sequence:
$$
\xymatrix{
0 \ar[r] & H_3\left(\pi/\Gamma_{k+1} \pi\right) \ar[r] & 
\frac{\Bar_3(\pi/\Gamma_{k+1} \pi)}{\Img(\partial_4)}  \ar[r]^-{\partial_3} & \Ker(\partial_2)  \ar[r] &  H_2\left(\pi/\Gamma_{k+1} \pi\right)\ar[r] &0.
}
$$
The group  $H_2\left(\pi/\Gamma_{k+1} \pi\right)$  is finitely generated free abelian
(as follows from Hopf's theorem) and the same is true for $H_3\left(\pi/\Gamma_{k+1} \pi\right)$ (as proved in \cite{IO}).
The abelian group $\Ker(\partial_2)$ is free (as a subgroup of $\Bar_2(\pi/\Gamma_{k+1} \pi)$) and is not finitely generated
(since, for any $\gp{x}\neq 1$, the $2$-cycles $c_{i} := (\gp{x}^i|\gp{x}^{i+1})-(\gp{x}^{i+1}|\gp{x}^i)$ are linearly independent).
It follows that the abelian group $\Bar_3(\pi/\Gamma_{k+1} \pi)/\Img(\partial_4)$ is free and has infinite rank.
\end{remark}

The previous construction can be applied to extend Morita's homomorphisms to the mapping class group.
For this, we shall \emph{choose} a $\pi$-marked trivalent bordered fatgraph $G$.
We  consider the composition
$$
\xymatrix{
\Mcg\ \ar@{>->}[r]^-G  \ar@/_1.3pc/@{-->}[rr]_-{\widetilde{M}_G}
& \Pt \ar[r]^-{\widetilde{M}} & \Bar_3(\pi)/\Img(\partial_4)   
}
$$
where the first map is defined in Remark \ref{rem:mcg_to_Pt}.

\begin{corollary}
\label{cor:mapping_class_group}
For every $k\geq 1$, the $k$-th reduction $\widetilde{M}_{G,k}$ of $\widetilde{M}_G$ defined by 
$$
\xymatrix{
\Mcg \ar[r]^-{\widetilde{M}_G} \ar@/_1.5pc/@{-->}[rr]_-{\widetilde{M}_{G,k}} & \Bar_3(\pi)/\Img(\partial_4)  \ar@{->>}[r] 
& \Bar_3(\pi/\Gamma_{k+1} \pi)/\Img(\partial_4)
}
$$
is an extension of the $k$-th Morita homomorphism to the mapping class group.
Moreover, $\widetilde{M}_{G,k}$ is a crossed homomorphism whose homology class
$$
\left[\widetilde{M}_{G,k}\right] \in H^1\left(\Mcg;\frac{\Bar_3(\pi/\Gamma_{k+1} \pi)}{\Img(\partial_4)}\right)
$$
does not depend on the choice of $G$.
\end{corollary}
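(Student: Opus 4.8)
The plan is to deduce everything from Theorem~\ref{th:tautological} together with Remark~\ref{rem:mcg_to_Pt}, reducing the corollary to two essentially formal verifications: that composing the injection $G:\Mcg \to \Pt$ with $\widetilde M$ gives a crossed homomorphism, and that the cohomology class of this crossed homomorphism is independent of $G$. First I would check that $\widetilde M_{G,k}$ restricts to $M_k$ on $\Mcg[k]$: this is immediate from the commutative square \eqref{eq:tautological_extension}, since for $f\in\Mcg[k]$ the map $G$ sends $f$ precisely to the sequence of Whitehead moves relating $G$ to $f(G)$, and $\widetilde M_k$ of that sequence is $M_k(f)$ by the theorem. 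So $\widetilde M_{G,k}|_{\Mcg[k]}=M_k$, which is the ``extension'' claim.

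Next I would establish the crossed-homomorphism property. Let $f_1,f_2\in\Mcg$. Using the $\Mcg$-equivariance of $\widetilde M$ from Theorem~\ref{th:tautological} and the fact that $G(f_1\circ f_2)$ can be written as the concatenation of the Whitehead-move sequence from $G$ to $f_1(G)$ followed by the $f_1$-translate of the sequence from $G$ to $f_2(G)$ — the latter being a sequence from $f_1(G)$ to $f_1f_2(G)$ — we get
$$
\widetilde M_G(f_1 f_2)=\widetilde M_G(f_1)+\widetilde M\bigl(f_1\cdot G(f_2)\bigr)=\widetilde M_G(f_1)+f_{1*}\widetilde M_G(f_2),
$$
where $f_{1*}$ is the action of $f_1$ on $\Bar_3(\pi)/\Img(\partial_4)$ induced functorially from its action on $\pi$. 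Reducing mod $\Gamma_{k+1}\pi$ gives the cocycle identity for $\widetilde M_{G,k}$ with coefficients in the $\Mcg$-module $\Bar_3(\pi/\Gamma_{k+1}\pi)/\Img(\partial_4)$. Hence $\widetilde M_{G,k}$ is a crossed homomorphism and defines a class $[\widetilde M_{G,k}]\in H^1(\Mcg;\Bar_3(\pi/\Gamma_{k+1}\pi)/\Img(\partial_4))$.

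Finally, for the independence of $G$, let $G'$ be another $\pi$-marked trivalent bordered fatgraph and let $c\in\Pt$ be the (unique up to relations) sequence of Whitehead moves from $G$ to $G'$. For any $f\in\Mcg$ one compares the two sequences $G\xrightarrow{c}G'\xrightarrow{}f(G')$ and $G\xrightarrow{}f(G)\xrightarrow{f(c)}f(G')$, which are equal in $\Pt$; applying $\widetilde M$ and using equivariance yields
$$
\widetilde M(c)+\widetilde M_{G'}(f)=\widetilde M_G(f)+f_*\bigl(\widetilde M(c)\bigr),
$$
so that $\widetilde M_{G,k}-\widetilde M_{G',k}=(f\mapsto f_*a-a)$ with $a:=\widetilde M(c)\bmod\Gamma_{k+1}\pi$ a fixed element of the coefficient module; this is a coboundary, so $[\widetilde M_{G,k}]=[\widetilde M_{G',k}]$. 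The only mild subtlety, and the step I would be most careful about, is making sure the $\Mcg$-module structure on $\Bar_3(\pi/\Gamma_{k+1}\pi)/\Img(\partial_4)$ is the one genuinely compatible with both the Dehn--Nielsen action on $\pi$ and the equivariance statement of Theorem~\ref{th:tautological}, so that ``equivariance of $\widetilde M$'' translates precisely into the $f_*$ appearing in the cocycle and coboundary formulas above; once that bookkeeping is pinned down, the rest is a routine diagram chase.
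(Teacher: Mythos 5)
Your proposal is correct and follows essentially the same route as the paper: the extension claim is read off from the commutative square \eqref{eq:tautological_extension}, the cocycle identity comes from decomposing $G(f_1\circ f_2)$ as the sequence $G\to f_1(G)$ followed by the $f_1$-translate of $G\to f_2(G)$ and invoking the $\Mcg$-equivariance of $\widetilde{M}$, and the independence of $G$ is obtained by conjugating with a fixed sequence of Whitehead moves between $G$ and $G'$, exhibiting the difference of cocycles as the coboundary of $\widetilde{M}(c)$. The module structure you flag at the end is indeed the one induced functorially from the Dehn--Nielsen action of $\Mcg$ on $\pi$, exactly as the paper uses it.
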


\begin{proof}
The first part follows from Theorem \ref{th:tautological} and  the commutativity of the diagram
$$
\xymatrix{
\Mcg\ \ar@{>->}[rr]^-G &  & \pi_1^{\operatorname{cell}}\left(\MFat\right) = \Pt   \ar@{->>}[d] \\
\Mcg[k] \ar@{=}[r]^-G \ar@{^{(}->}[u]  & \pi_1\left(\MFat/\Mcg[k],\{G\}\right) \ar@{^{(}->}[r]  
& \pi_1^{\operatorname{cell}}\left(\MFat/\Mcg[k]\right)  = \Pt/\Mcg[k].
}
$$
To prove the second statement, it is enough to check that the map $\widetilde{M}_G$ itself is a crossed homomorphism 
whose homology class does not depend on $G$. 
Let $f$ and $f' \in \Mcg$ and represent them by some sequences of Whitehead moves 
$S: G \to \cdots \to f(G)$ and $S':G \to \cdots \to f'(G)$. 
So $f'(S)$ is a sequence of Whitehead moves $f'(G) \to \cdots \to f' \circ f(G)$.
The $\Mcg$-equivariance of $\widetilde{M}$ gives
$$
\widetilde{M}_G(f'\circ f) = \widetilde{M}\left(S' \hbox{ followed by } f'(S)\right)
= \widetilde{M}(S') + f' \cdot \widetilde{M}(S) =\widetilde{M}_G(f')+ f' \cdot \widetilde{M}_G(f),
$$
\ie $\widetilde{M}_G$ is a crossed homomorphism.
Assume now that $G'$ is another $\pi$-marked trivalent bordered fatgraph, 
and let $U$ be a sequence of Whitehead moves connecting $G'$ to $G$. 
For all $f\in \Mcg$, the $\Mcg$-equivariance of $\widetilde{M}$ implies  
$$
\widetilde{M}_{G'}(f)= \widetilde{M}\left( U \hbox{ followed by } S \hbox{ followed by } f(U^{-1})\right)
= \widetilde{M}(U) + \widetilde{M}_G(f) - f\cdot \widetilde{M}(U).
$$
As a result, the $1$-cocycles $\widetilde{M}_G$ and $\widetilde{M}_{G'}$ differ by a coboundary.
\end{proof}

\subsection{Topological interpretation}

We shall now give a topological description of the groupoid map $\widetilde{M}$ defined in Theorem \ref{th:tautological}.
This is based on the $3$-dimensional interpretation of the Ptolemy groupoid in terms of triangulations and Pachner moves.
The triangulations  that we shall consider will be  \emph{singular} in the sense of \cite{TV}, and we will use the following terminology.  
An \emph{edge orientation} of a triangulation  $\mathcal{T}$ is the choice of an orientation for each $1$-simplex of $\mathcal{T}$;
we call it \emph{admissible} if none of the $2$-simplices of $\mathcal{T}$ has an orientation compatible with all its edges.

First of all, we give a topological description of the $2$-chain $Z_G$ defined in Lemma \ref{lem:Z_G} 
for every $\pi$-marked trivalent bordered fatgraph $G$. Such a fatgraph defines a triangulated surface $\Sigma_G$:
one associates to each internal vertex $v$ one copy $\Delta^v$ of the standard $2$-dimensional simplex, 
and one glues $\Delta^v$ linearly to $\Delta^{v'}$ along one edge if $v$ and $v'$ are adjacent in $G$.
The triangulation of $\Sigma_G$ is singular, with a single vertex denoted by $\star$.
There is a canonical embedding of the ``thickened'' graph $\mathbb{G}$ into $\Sigma_G$,
such that $G\subset \mathbb{G}$ is dual to the triangulation of $\Sigma_G$ and 
$\mathbb{G} \cap \partial \Sigma_G$ is the ``free'' end $F$ of $\mathbb{G}$. In particular, the surface $\Sigma_G$ has a preferred orientation.
As for the marking of $G$, it defines an isotopy class of orientation-preserving homeomorphisms $\Sigma_G \to \Sigma$ sending $F$ to $I$. 
In the sequel, that identification $\Sigma_G\cong \Sigma$ defined by the marking of $G$ will be implicit.  
Note that the corresponding $\pi$-marking $\varpi$ of $G$ then satisfies $\varpi(e):= [e^*] \in \pi_1(\Sigma,\star)$ for all oriented internal edge $e$ of $G$,
where $e^*$ is the $1$-simplex of $\Sigma_G$ dual to $e$ and is oriented in such a way that $e^*\cdot e = +1$:
\begin{equation}
\label{eq:loop_orientation}
{\labellist \small \hair 2pt
\pinlabel {$\star$}  at 1 76
\pinlabel {$\star$}  at 217 76
\pinlabel {$\star$}  at 109 148
\pinlabel {$\star$}  at 108 4
\pinlabel {$e$} [b] at 85 76
\pinlabel {$e^*$} [r] at 110 117
\endlabellist
\includegraphics[scale=0.5]{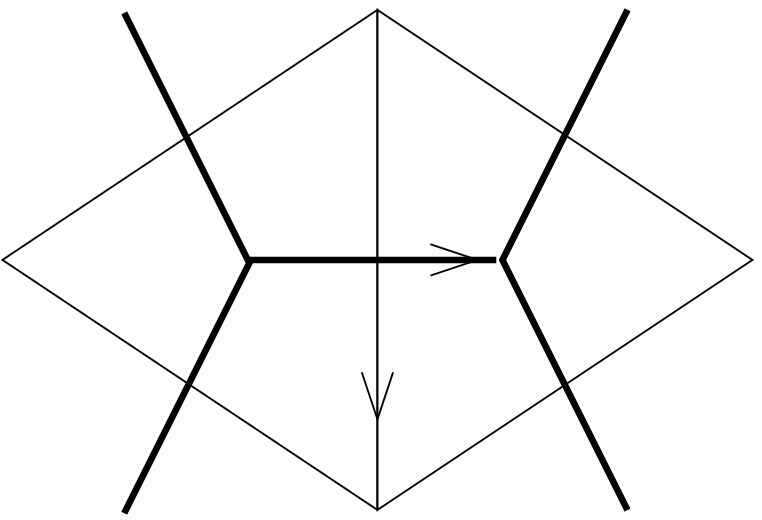}}
\end{equation}

Since each edge $e$ of $G$ has a preferred orientation (\ref{eq:edge_orientation}), 
each $1$-simplex $e^*$ of $\Sigma_G$ has a preferred orientation as shown on (\ref{eq:loop_orientation}). 
Thus $\Sigma_G$ has a preferred edge orientation, which is easily seen to be admissible.
We also get a preferred orientation on each $2$-simplex $\Delta^v$ of $\Sigma_G$, that is
\begin{equation}
\label{eq:triangle_orientation}
{\labellist \small \hair 2pt
\pinlabel {$\star$}  at 9 0
\pinlabel {$\star$}  at 9 182
\pinlabel {$\star$}  at 166 94
\pinlabel {$\star$}  at 384 183
\pinlabel {$\star$\ .} [l] at 532 94
\pinlabel {$\star$}  at 384 1
\pinlabel {or} at 280 91
\pinlabel {\Large $\circlearrowleft$} at 53 96
\pinlabel {\scriptsize $+$} at 53 96
\pinlabel {\Large $\circlearrowright$} at 440 96
\pinlabel {\scriptsize $+$} at 440 96
\endlabellist
\includegraphics[scale=0.3]{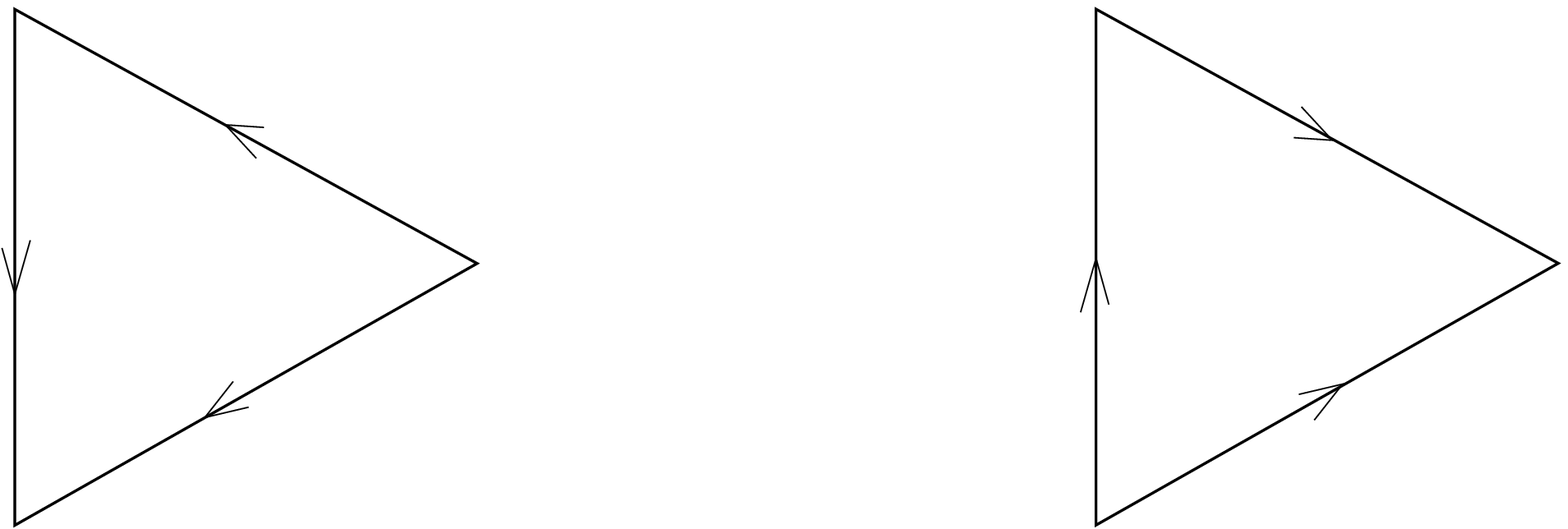}}
\end{equation}
Thus, we can consider the cell chain complex of $\Sigma_G$ and the fundamental class $[\Sigma_G]$
defines a $2$-chain in that complex.

Let $\K^\Delta(\pi,1)$ be the simplicial model for the Eilenberg--MacLane space
(whose construction is recalled below), which we equip  with its canonical singular triangulation.
Recall that the cell chain complex of $\K^\Delta(\pi,1)$ is the bar complex (\ref{eq:bar}) of $\pi$.

\begin{lemma}
\label{lem:description_Z_G}
For any $\pi$-marked trivalent bordered fatgraph $G$, there is a canonical cellular map
$h_G: \Sigma_{G} \to \K^\Delta(\pi,1)$ inducing the canonical isomorphism at the level of fundamental groups.
Moreover, we have $h_G([ \Sigma_{G}]) = - Z_G \ \in \Bar_2(\pi).$
\end{lemma}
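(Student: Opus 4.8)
The plan is to build the map $h_G$ simplex-by-simplex, directly from the triangulation of $\Sigma_G$ and the $\pi$-marking $\varpi$, and then to read off the image of the fundamental class from the combinatorics of the preferred edge orientation. First I would recall the simplicial model $\K^\Delta(\pi,1)$: it has a single $0$-simplex, one $1$-simplex $[\gp{g}]$ for each $\gp{g}\in\pi$, one (nondegenerate) $2$-simplex $[\gp{g}|\gp{h}]$ for each pair, whose three edges are $[\gp{g}]$, $[\gp{g}\gp{h}]$, $[\gp{h}]$ arranged so that the boundary is $[\gp{h}]-[\gp{g}\gp{h}]+[\gp{g}]$, and more generally $n$-simplices indexed by $n$-tuples; its cell chain complex is exactly (\ref{eq:bar}). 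Since $\Sigma_G$ has a single vertex $\star$, the map $h_G$ must send that vertex to the unique $0$-simplex; since the edge orientation on $\Sigma_G$ is admissible (as noted just before the statement), every $2$-simplex $\Delta^v$ has exactly one vertex which is a ``source'' and one which is a ``sink'' for its three oriented edges, so its three edges can be labelled $a$, $b$, $ab$ for well-defined $a,b\in\pi$ (the loops $\varpi(e^*)$ read off the two edges out of the source, in the appropriate order), and we declare $h_G$ to map $\Delta^v$ linearly to the $2$-simplex $[a|b]$ respecting this labelling. One must check this is consistent on shared edges — but a shared edge $e^*$ carries the single label $\varpi(e^*)$ from both sides, so the gluing is automatic — and that the resulting cellular map is well-defined and induces the canonical isomorphism on $\pi_1$; the latter holds because $G$ being a deformation retract of $\Sigma$, the loops $\varpi(e^*)$ generate $\pi$ with the vertex relations of $G$ as relations, and $h_G$ sends exactly these generators and relations to the corresponding generators and relations of $\pi$.

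The remaining point is the identification $h_G([\Sigma_G]) = -Z_G$. Here the key is to match, for each internal vertex $v$ of $G$ (equivalently each $2$-simplex $\Delta^v$), the sign $\varepsilon(v)$ and the pair $(\gp{l}(v),\gp{r}(v))$ from (\ref{eq:Z_G}) against the pushforward of the oriented simplex $[\Delta^v]$. Concretely: the preferred orientation of $\Delta^v$ is the one in (\ref{eq:triangle_orientation}), and the preferred edge orientations of its three sides are those in (\ref{eq:loop_orientation}); comparing this local picture with the standard oriented $2$-simplex underlying $[\gp{g}|\gp{h}]$ in $\K^\Delta(\pi,1)$ gives a sign $\pm1$, and I expect a direct inspection of the two local configurations in (\ref{eq:Z_G}) to show that this sign is exactly $-\varepsilon(v)$ and that the pair $(\gp{g},\gp{h})$ it produces is exactly $(\gp{l}(v),\gp{r}(v))$. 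Summing over all internal vertices then gives $h_G([\Sigma_G]) = -\sum_v \varepsilon(v)\cdot(\gp{l}(v)|\gp{r}(v)) = -Z_G$. As a consistency check one notes $\partial_2$ of both sides agree: $\partial(h_G[\Sigma_G]) = h_G(\partial[\Sigma_G]) = h_G([\partial\Sigma_G])$, and since $\partial \Sigma_G$ is the boundary loop with class $\zeta$ while $\partial_2 Z_G = -(\zeta)$ by Lemma \ref{lem:Z_G}, both sides have boundary $-(\zeta)$.

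The main obstacle is purely a sign/orientation bookkeeping issue: pinning down, once and for all, the comparison between (i) the preferred orientation of $\Delta^v$ induced by that of $\Sigma_G$, (ii) the preferred orientations of the three dual $1$-simplices $e^*$, and (iii) the standard orientation conventions built into $\K^\Delta(\pi,1)$, so that the sign works out to $-\varepsilon(v)$ with the correct left/right assignment in \emph{every} one of the cases depicted in (\ref{eq:Z_G}). Everything else — well-definedness of $h_G$, the $\pi_1$-statement, and additivity over simplices — is routine once the local model at a single internal vertex is nailed down. I would therefore devote the bulk of the write-up to a careful treatment of one internal vertex (both cases $\varepsilon(v) = +1$ and $\varepsilon(v) = -1$), drawing the comparison explicitly against figures (\ref{eq:Z_G}), (\ref{eq:loop_orientation}) and (\ref{eq:triangle_orientation}), and then assert that the global statement follows by summation.
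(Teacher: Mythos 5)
Your plan is correct and follows essentially the same route as the paper: the paper also uses the admissible edge orientation to totally order the vertices of each $2$-simplex $\Delta^v$ (source $<$ middle $<$ sink, phrased there via desingularized simplices $\Delta^v_{\operatorname{b.g}}$ and the universal cover $\widetilde{\K^\Delta}(\pi,1)$), maps $\Delta^v$ affinely to the cell $(\gp{g}^v_{01}|\gp{g}^v_{12})=(\gp{l}(v)|\gp{r}(v))$, and concludes by the same local sign comparison $\varepsilon(v)=-\varepsilon'(v)$ summed over internal vertices. One small correction to your description: the labels $a,b$ are carried by the edges $(0^v,1^v)$ and $(1^v,2^v)$ (source-to-middle and middle-to-sink), not by the two edges issuing from the source (the second edge out of the source is the one labelled $ab$).
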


\noindent
It follows that $\partial_2(Z_G) = -h_G\left(\partial_2([ \Sigma_{G}])\right)= -h_G([\partial  \Sigma_{G}])= -(\zeta)$, as expected.

\begin{proof}
We recall how $\K^\Delta(\pi,1)$ is defined
by first constructing its universal cover $\widetilde{\K^\Delta}(\pi,1)$, see \cite{Brown_groups} for instance.
To each $\sigma=(\gp{g}_0,\dots,\gp{g}_n) \in \pi^{n+1}$, we associate a copy $\Delta^\sigma$
of the standard $n$-dimensional simplex: its vertices are denoted $(0^\sigma,\dots,n^\sigma)$.
Set $d_i\sigma := (\gp{g}_0,\dots,\widehat{\gp{g}_i},\dots,\gp{g}_n)$ $\in \pi^{n}$ 
and let $\delta_i^\sigma: \Delta^{d_i\sigma} \to \Delta^\sigma$ 
be the affine embedding sending $0^{d_i\sigma},\dots,{(n-1)}^{d_i\sigma}$ to 
the vertices $0^\sigma, \dots, \widehat{i^{\sigma}},\dots,n^{\sigma}$ respectively. We define
$$
\widetilde{\K^\Delta}(\pi,1) :=  \bigsqcup_{\substack{n\geq 0,\ \sigma\in \pi^{n+1}}}\! \! \!  \Delta^{\sigma} \bigg/ \sim
$$
where the equivalence relation $\sim$ on the disjoint union identifies each $\Delta^{d_i\sigma}$ with the $i$-th face of $\Delta^\sigma$ via $\delta_i^\sigma$.
The group $\pi$ acts on the left of $\widetilde{\K^\Delta}(\pi,1)$: for all $\gp{g}\in \pi$ and $\sigma=(\gp{g}_0,\dots,\gp{g}_n) \in \pi^{n+1}$,
the simplex $\Delta^{\sigma}$ is mapped linearly to $\Delta^{\gp{g}\cdot \sigma}$ 
where $\gp{g} \cdot \sigma := (\gp{gg}_0,\dots,\gp{gg}_n)$, 
the vertices $0^\sigma,\dots,n^\sigma$ being mapped to $0^{\gp{g}\cdot \sigma},\dots,n^{\gp{g}\cdot \sigma}$ respectively.
This action is free and $\widetilde{\K^\Delta}(\pi,1)$ is contractible so that
$$
\K^\Delta(\pi,1) := \widetilde{\K^\Delta}(\pi,1)/\pi
$$
is an  Eilenberg--MacLane space of type $\K(\pi,1)$ and $\widetilde{\K^\Delta}(\pi,1)$ is its universal cover.
The (singular) triangulation of $\widetilde{\K^\Delta}(\pi,1)$ projects onto a (singular) triangulation of $\K^\Delta(\pi,1)$,
whose associated cell chain complex  is the bar complex $\Bar_*(\pi)$.
More precisely, the oriented $n$-dimensional cell $(\gp{g}_1|\gp{g}_2|\cdots|\gp{g}_n)$ is the projection
of the $n$-dimensional simplex $\Delta^{(1,\gp{g}_1,\gp{g}_1\gp{g}_2,\dots,\gp{g}_1 \gp{g}_2 \dots \gp{g}_n)}$ 
of $ \widetilde{\K^\Delta}(\pi,1)$ with its canonical orientation.

We now define a cellular map $h_G: \Sigma_G \to \K^\Delta(\pi,1)$.
For each internal vertex $v$ of $G$, we consider the $2$-simplex $\Delta^v$ of $\Sigma_G$ \emph{before gluing}
and, for the sake of clarity, we denote it by $\Delta^{v}_{\operatorname{b.g}}$: 
thus, the projection $\Delta^{v}_{\operatorname{b.g}} \to \Delta^{v}$ is a desingularization of $\Delta^{v}$.
The preferred edge orientation of $\Sigma_G$ lifts to $\Delta^v_{\operatorname{b.g}}$.
Thus, the $3$ vertices of $\Delta^v_{\operatorname{b.g}}$ have a total ordering 
which allows us to denote them by $0^v,1^v,2^v$:

\begin{equation}
\label{eq:total_ordering}
{\labellist \small \hair 2pt
\pinlabel {$2^v$}  [tr] at 5 0
\pinlabel {$1^v$} [br] at 5 182
\pinlabel {$0^v$}  [l] at 166 94
\pinlabel {$1^v$}  [br] at 384 183
\pinlabel {$2^v$}  [l] at 546 94
\pinlabel {$0^v$} [tr] at 384 1
\pinlabel {or} at 280 91
\endlabellist
\includegraphics[scale=0.3]{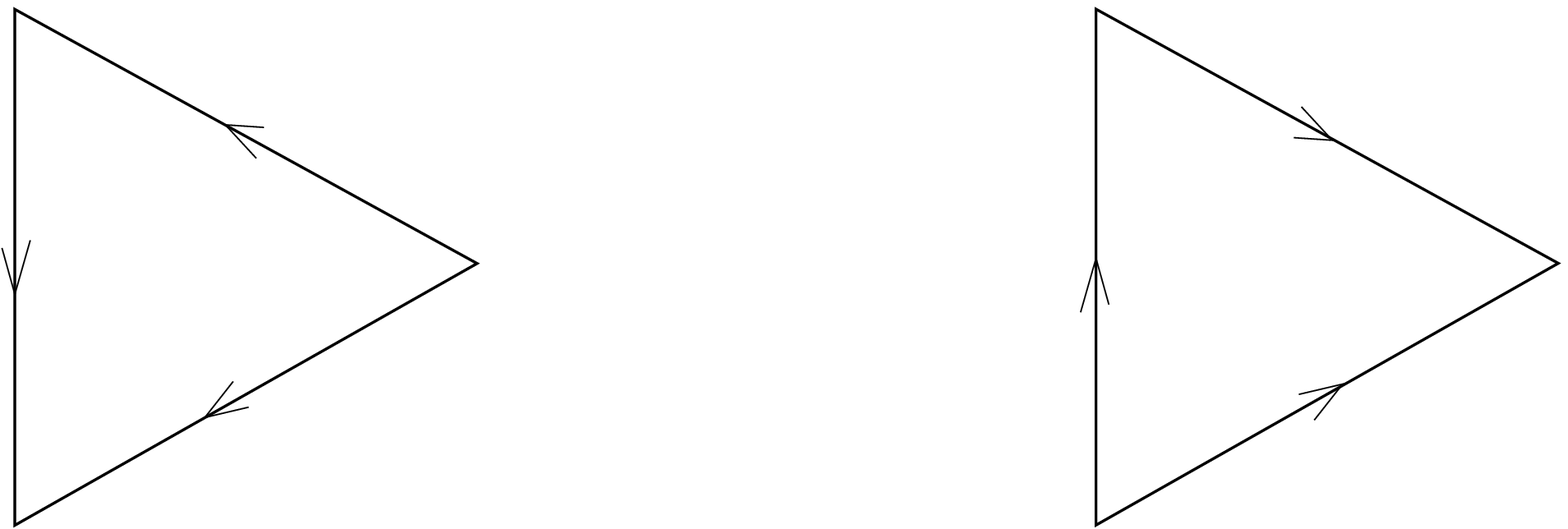}}
\end{equation}
Let $\gp{g}_{01}^v\in \pi$ and $\gp{g}_{12}^v \in \pi$ be the homotopy classes of the looped edges of $\Delta^v$ (after gluing) that correspond
to the $1$-simplices $(0^v,1^v)$ and $(1^v,2^v)$ of $\Delta^v_{\operatorname{b.g}}$, respectively.
We also consider the $2$-simplex $\Delta^{\sigma(v)}$ of $\widetilde{\K^\Delta}(\pi,1)$ \emph{before gluing}, where we set
$$
\sigma(v) := (1,\gp{g}_{01}^v,\gp{g}_{01}^v\gp{g}_{12}^v) \in \pi^3.
$$ 
Again, we denote it by $\Delta^{\sigma(v)}_{\operatorname{b.g}}$
and the projection $\widetilde{\K^\Delta}(\pi,1) \to \K^\Delta(\pi,1)$ 
induces a desingularization $\Delta^{\sigma(v)}_{\operatorname{b.g}} \to (\gp{g}_{01}^v|\gp{g}_{12}^v)$.
The affine isomorphism $\Delta^v_{\operatorname{b.g}} \to \Delta^{\sigma(v)}_{\operatorname{b.g}}$ defined by $i^v \mapsto i^{\sigma(v)}$ 
induces a map $\Delta^v \to (\gp{g}_{01}^v|\gp{g}_{12}^v)$.
Doing this for each internal vertex $v$ of $G$, we obtain a cellular map $h_G: \Sigma_G \to \K^\Delta(\pi,1)$.
Note that each $1$-simplex $e^*$ of $\Sigma_G$ is sent by  $h_G$ to the $1$-simplex $([e^*])$ of $\K^\Delta(\pi,1)$:
so, $h_G$ induces the desired isomorphism at the level of fundamental groups.

We now  compute the image by $h_G$ of the fundamental class of $\Sigma_G$, which is the sum
$$
[\Sigma_G] = \sum_v \varepsilon'(v) \cdot \Delta^v \ \in C_2(\Sigma_G)
$$
over all internal vertices $v$ of $G$. Here the sign $\varepsilon'(v)=\pm 1$ 
compares the preferred orientation (\ref{eq:triangle_orientation}) of $\Delta^v$ with the orientation of the surface $\Sigma_G$.
Using the previous notations, we have 
$$
h_G\left(\left[\Sigma_{G}\right]\right) = \sum_v \varepsilon'(v) \cdot (\gp{g}_{01}^v \vert \gp{g}_{12}^v) \ \in \Bar_2(\pi).
$$
We see from (\ref{eq:Z_G}) that $\varepsilon(v)= -\varepsilon'(v)$, $\gp{g}_{01}^v=\gp{l}(v)$ and $\gp{g}_{12}^v=\gp{r}(v)$.
We conclude that $h_G([\Sigma_G]) = - Z_G$.
\end{proof}

\begin{remark}
Lemma \ref{lem:description_Z_G} is a variation of a well-known fact:
given an $n$-dimensional closed oriented manifold $M$,
there is a recipe to find representatives in $\Bar_n(\pi_1(M))$ of the image of $[M]$ in $H_n(\pi_1(M))$.
The main ingredient is a (possibly singular) triangulation of $M$ with an admissible edge orientation.
For instance, this is used in the construction of Dijkgraaf--Witten invariants  \cite{DW,AC}.
\end{remark}

The topological interpretation of the groupoid map $\widetilde{M}$ needs the cylinder $\Sigma \times [-1,1]$,
which we think of in its ``lens'' form:

$$
L := (\Sigma\times [-1,1])/\!\sim\ = \quad
\begin{array}{c}
{\labellist \small \hair 2pt
\pinlabel {$ \partial \Sigma $} [r] at 2 54
\pinlabel {$\partial \Sigma $} [l] at 265 54
\pinlabel {$\Sigma_+ $} [b] at 134 104
\pinlabel {$\Sigma_- $} [t] at 134 0
\endlabellist
\includegraphics[scale=0.3]{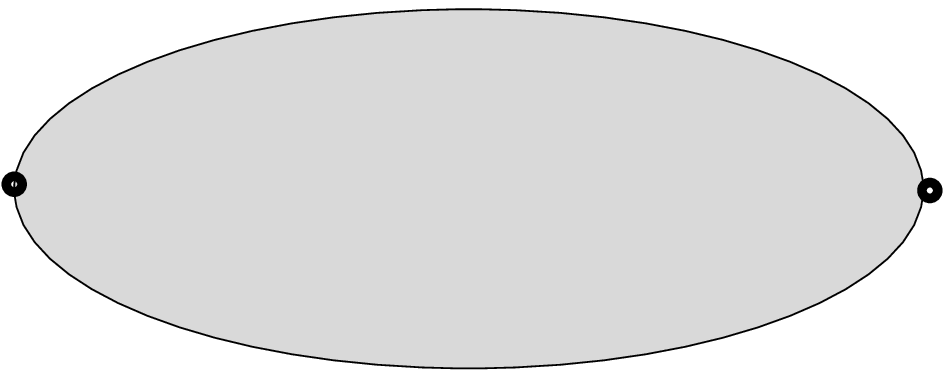}}
\end{array}
$$
\vspace{0.1cm}

\noindent
where the equivalence relation $\sim$ collapses $\partial \Sigma \times [-1,1]$ to $\partial \Sigma \times \{0\}$,
and $\Sigma_{\pm}$ denotes the image of $\Sigma \times \{\pm 1\}$ under that identification.

\begin{lemma}
\label{lem:L_G}
Any $\pi$-marked trivalent bordered fatgraph $G$ defines a singular triangulation $L_G$ of $L$
with a preferred admissible edge orientation, which restricts to $\Sigma_G$ on $\Sigma_+$ and $\Sigma_-$. 
(The triangulation $L_G$ has a single vertex $\star$ and $12g-1$ tetrahedra.) 
\end{lemma}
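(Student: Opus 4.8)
**

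The plan is to build the triangulation $L_G$ explicitly from the combinatorics of $G$, realizing $L$ as a union of tetrahedra indexed by the edges of $G$, and then to equip it with the edge orientation inherited from the preferred edge orientations (\ref{eq:edge_orientation}) of $G$. First I would recall that $\Sigma_G$, as constructed before Lemma \ref{lem:description_Z_G}, is a singular triangulation of $\Sigma$ with one vertex $\star$, and with one $2$-simplex $\Delta^v$ for each internal vertex $v$ of $G$ and one $1$-simplex $e^*$ for each edge $e$ of $G$. Since $G$ is trivalent with one external vertex, Euler characteristic bookkeeping (the surface $\mathbb{G}\cong\Sigma$ has $\chi = 1-2g$, and a trivalent bordered fatgraph for $\Sigma$ has $2g$ internal vertices, $3g$ edges, one of which is external) gives $2g$ triangles and $3g-1$ internal edges in $\Sigma_G$, hence $3g-1$ triangles in $\Sigma_G$ once we adjoin the single external edge dual-arc — the relevant count being that $\Sigma_G$ has $2g$ triangles and the lens $L$ built over it will have its tetrahedra indexed appropriately.

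The key step is the "suspension" construction. The lens $L$ is the union of two copies $L_+$ and $L_-$ of the cone on $\Sigma$ glued along $\partial\Sigma$, equivalently the join-like object with $\Sigma_+$ on top, $\Sigma_-$ on the bottom, and everything pinched along $\partial\Sigma\times\{0\}$. I would triangulate each half $L_\pm$ by coning the triangulation $\Sigma_G$ of $\Sigma_\pm$ off toward the other sheet: since there is already a single vertex $\star$ present on both sheets and on the boundary, coning does not introduce new vertices, so $L_G$ retains the single vertex $\star$. Concretely, over each triangle $\Delta^v$ of $\Sigma_G$ one places a tetrahedron in $L_+$ (the cone of $\Delta^v\subset\Sigma_+$ to a point of $\Sigma_-$, realized after the pinching as a genuine tetrahedron with all vertices at $\star$), and likewise $2g$ tetrahedra in $L_-$; this gives the stated count — more carefully, the combinatorics of the Whitehead region forces the total to be $12g-1$, and I would verify this by a direct count once the precise coning pattern (which triangle of $\Sigma_-$ each apex lands on) is fixed. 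The edge orientation: the edges of $L_G$ are the edges $e^*$ of $\Sigma_\pm$ (oriented as in (\ref{eq:loop_orientation})) together with the new "vertical" edges created by coning; but since every vertex is $\star$, each vertical edge is a loop at $\star$, and one orients it by a fixed convention (say, from $\Sigma_-$ toward $\Sigma_+$). One then checks that no $2$-simplex — neither the horizontal triangles $\Delta^v\subset\Sigma_\pm$ (already admissible by the construction before Lemma \ref{lem:description_Z_G}) nor the new "vertical" triangles appearing on the boundaries of the tetrahedra — acquires an orientation compatible with all three of its edges; this is a finite local check, once for each of the few combinatorial types of tetrahedron.

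The restriction claim is then automatic: by construction $\Sigma_+$ and $\Sigma_-$ are subcomplexes of $L_G$ carrying exactly the triangulation $\Sigma_G$ with its preferred edge orientation. The main obstacle I expect is the bookkeeping for admissibility of the vertical $2$-simplices and pinning down the exact number $12g-1$: one must choose the coning pattern carefully (there is a genuine choice in how the apices of the top tetrahedra are distributed among the bottom triangles, and only some choices keep the edge orientation admissible), and then confirm that this choice is compatible across the pinch $\partial\Sigma\times\{0\}$ so that $L_+$ and $L_-$ glue to a genuine singular triangulation of $L$ rather than merely a $\Delta$-complex with mismatched faces. I would handle this by working out the picture for small genus (the figure for $g=1$ would make the pattern visible) and then describing the general construction edge-by-edge, with the admissibility of each tetrahedron's four faces reduced to the already-established admissibility of the preferred edge orientation on $\Sigma_G$ together with the single convention on vertical edges.
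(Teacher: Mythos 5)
There is a genuine gap here, on two levels. First, your Euler-characteristic bookkeeping is off: a trivalent bordered fatgraph spine of $\Sigma$ (one external univalent vertex, all internal vertices trivalent) has $4g-1$ internal vertices and $6g-1$ edges, not $2g$ and $3g$ --- writing $V$ for the number of internal vertices, the graph has $(3V+1)/2$ edges and deformation retracts onto $\Sigma$, so $(V+1)-(3V+1)/2 = 1-2g$ forces $V=4g-1$. Hence $\Sigma_G$ has $4g-1$ triangles, and any construction producing a bounded number of tetrahedra per triangle of $\Sigma_\pm$ must be calibrated against $12g-1 = 3(4g-1)+2$, which your count of $2g+2g=4g$ tetrahedra cannot match; the remark that ``the combinatorics of the Whitehead region forces the total to be $12g-1$'' is not a count but a hope. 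Second, and more seriously, the coning construction does not produce a triangulation of $L$ at all when $g\geq 1$: the lens $L\cong \Sigma\times[-1,1]$ is a genus-$2g$ handlebody, not a ball, so it is not the cone on its boundary, and the union of the straight cones $\star * \Delta^v$ over the triangles of $\Sigma_+$ only fills a star-shaped neighbourhood of $\star$. Equivalently, in the combinatorial picture the link of the cone point in $C\Sigma_+\cup_{\partial\Sigma}C\Sigma_-$ would have to contain a copy of $\Sigma$, which is impossible at a vertex of a triangulated $3$-manifold with the prescribed boundary.

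The paper's construction avoids both problems by decomposing $L$ differently: it writes $L$ as the cylinder $\Sigma\times[-1,1]$ with its two boundary circles $\partial\Sigma\times\{\pm1\}$ identified, glued to a solid torus $\partial\Sigma\times D^2$ along the resulting torus. Each prism $\Delta^v\times[-1,1]$ over a triangle of $\Sigma_G$ is cut into $3$ tetrahedra, the cutting pattern being dictated by the preferred edge orientation of $\Sigma_G$ and the ``vertical'' edges oriented upwards; this yields $3(4g-1)=12g-3$ tetrahedra and a two-vertex triangulation of the cylinder. The solid torus is then filled with $2$ further tetrahedra extending a one-vertex triangulation of the torus, with a unique admissible extension of the edge orientation, giving $12g-1$ tetrahedra and a single vertex in total. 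If you want to salvage your write-up, replace the coning by this prism subdivision (which is where the edge orientation genuinely enters, resolving the ``choice of coning pattern'' you worried about), and do not forget the solid-torus piece, which your decomposition into $L_+$ and $L_-$ has no room for.
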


\begin{proof}
The ``lens'' cylinder $L$ can be identified to
\begin{equation}
\label{eq:union}
\frac{\Sigma \times [-1,+1]}{\partial \Sigma \times \{-1\} = \partial \Sigma \times\{+1\}}
\cup \left(\partial \Sigma \times D^2\right)
\end{equation}
where $\partial D^2$ is identified with the quotient space $[-1,+1]/(\{-1\} =\{+1\})$.
The  torus $\partial \Sigma \times \partial D^2$ can be given the following triangulation:

\begin{equation}
\label{eq:torus}
\begin{array}{c}
{\labellist \small \hair 2pt
\pinlabel {$ \partial D^2 $} [r] at 1 108
\pinlabel {$\partial D^2$} [l] at 299 98
\pinlabel {$\partial \Sigma$} [b] at 145 190
\pinlabel {$\partial \Sigma$} [t] at 145 0
\pinlabel {$\star$} at 4 6
\pinlabel {$\star$} at 292 6
\pinlabel {$\star$} at 292 185 
\pinlabel {$\star$} at 4 185
\endlabellist
\includegraphics[scale=0.3]{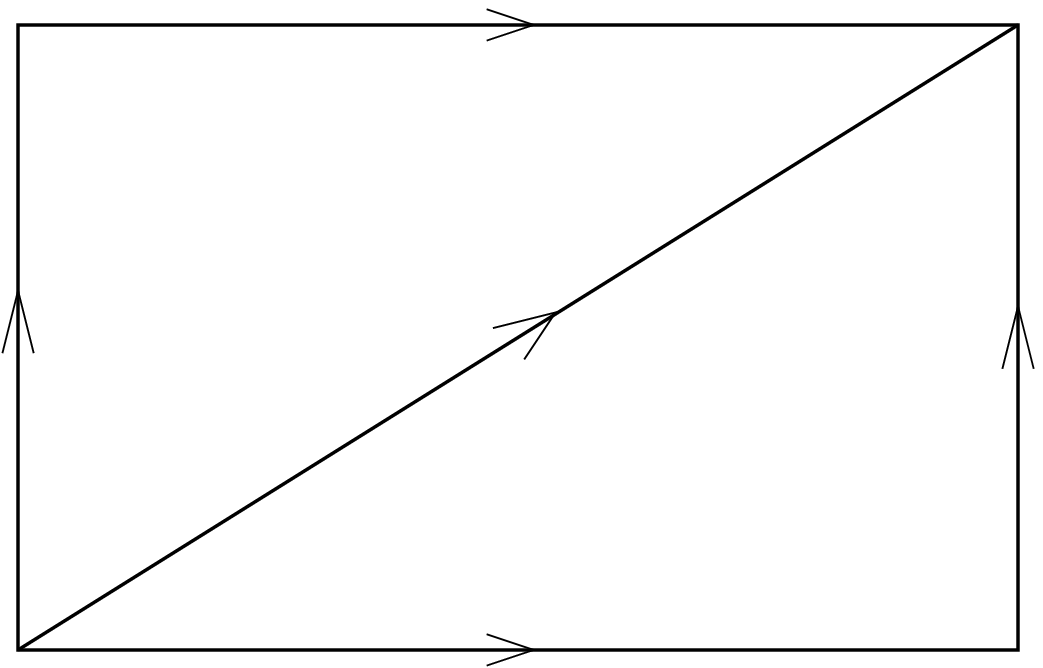}}
\end{array}
\end{equation}
The most economic way to extend it to a one-vertex triangulation of the solid torus $\partial \Sigma \times D^2$ is by adding $2$ tetrahedra, 
as explained in \cite[Figure 5.C]{JR}. Furthermore, it can be checked that the edge orientation shown on (\ref{eq:torus}) can be extended in a unique way
to an admissible edge orientation on that triangulation of $\partial \Sigma \times D^2$.

Next, we  triangulate the cylinder $\Sigma \times [-1,1]$ by subdividing, 
for each internal vertex $v$ of $G$, the prism $\Delta^v \times [-1,1]$ into $3$ tetrahedra.
The way that we subdivide is determined by the preferred edge  orientation of $\Sigma_G$ -- 
see Figure \ref{fig:prism}. The ``non-horizontal'' edges of the prisms are oriented upwards. 
Thus, $G$ defines a two-vertex triangulation of $\Sigma \times [-1,1]$, with a preferred admissible edge orientation
and $3\cdot(4g-1)=12g-3$ tetrahedra. 
Gluing it with the above edge-oriented triangulation of $\partial \Sigma \times D^2$, 
we obtain a one-vertex  edge-oriented triangulation $L_G$ of $L$ with $12g-1$ tetrahedra.
\end{proof}

\begin{figure}
{\labellist \small \hair 2pt
\pinlabel {or} at 188 109
\pinlabel {$\star$} at  60 5
\pinlabel {$\star$} at  1 63
\pinlabel {$\star$} at  120 64
\pinlabel {$\star$} at  61 122
\pinlabel {$\star$} at  2 180
\pinlabel {$\star$} at  119 180
\pinlabel {$\star$} at  316 3
\pinlabel {$\star$} at  256 61
\pinlabel {$\star$} at  375 61
\pinlabel {$\star$} at  315 120
\pinlabel {$\star$} at  257 178
\pinlabel {$\star$} at  374 179
\endlabellist
\includegraphics[scale=0.4]{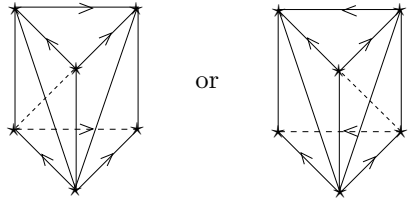}}
\caption{How to triangulate the prism $\Delta_v \times [-1,1]$.}
\label{fig:prism}
\end{figure}

A Whitehead move $G \stackrel{W}{\to} G'$ corresponds, at the level of singular triangulations,
to a diagonal exchange, \ie to a $2 \leftrightarrow 2$ Pachner move $\Sigma_G \stackrel{W}{\to} \Sigma_{G'}$.
Thus, a Whitehead move can be seen as a tetrahedron $\Delta^W$ making the transition 
between the triangulated surfaces $\Sigma_G$ and $\Sigma_{G'}$, as shown in Figure \ref{fig:Whitehead}.
Therefore, a sequence of Whitehead moves between $\pi$-marked trivalent bordered fatgraphs 
$$
S=\left(G_-=G_1 \stackrel{W_1}{\longrightarrow} G_2 \stackrel{W_2}{\longrightarrow} 
\cdots \stackrel{W_r}{\longrightarrow}  G_{r+1}=G_+\right)
$$ 
defines a singular triangulation $L_S$ of the ``lens'' cylinder $L$ by gluing successively $r$ tetrahedra $\Delta^{W_1},\dots,\Delta^{W_r}$
to the triangulation $L_{G_-}$ (defined in Lemma \ref{lem:L_G}).
This triangulation $L_S$ has only one vertex $\star$,
and restricts to $\Sigma_{G_\pm}$ on the surface $\Sigma_{\pm}$.
It has a preferred edge orientation, since $L_{G_-}$ and each triangulated surface $\Sigma_{G_i}$ have one. 
That edge orientation of $L_S$ is admissible and it induces an orientation for each triangle of $L_S$ by the rule (\ref{eq:triangle_orientation}).
Furthermore, the edge orientation of $L_S$ also induces  an orientation for each tetrahedron of $L_S$ using the following rule:
\begin{equation}
\label{eq:tetrahedron_orientation}
{\labellist \small \hair 2pt
\pinlabel {$\star$} at 4 136
\pinlabel {$\star$} at 153 136
\pinlabel {$\star$} at 167 196
\pinlabel {$\star$} at 80 238
\pinlabel {$\star$} at 359 134
\pinlabel {$\star$} at 508 134
\pinlabel {$\star$} at 523 191
\pinlabel {$\star$} at 434 237
\pinlabel {\scriptsize $+$} at 122 29
\pinlabel {\Large $\circlearrowleft$} at 122 29
\pinlabel {\scriptsize $+$} at 434 44
\pinlabel {\Large $\circlearrowleft$} at 434 44
\pinlabel {or} at 269 109
\pinlabel {``upwards''} [r] at 72 101
\pinlabel {``upwards''} [l] at 518 70
\endlabellist
\includegraphics[scale=0.5]{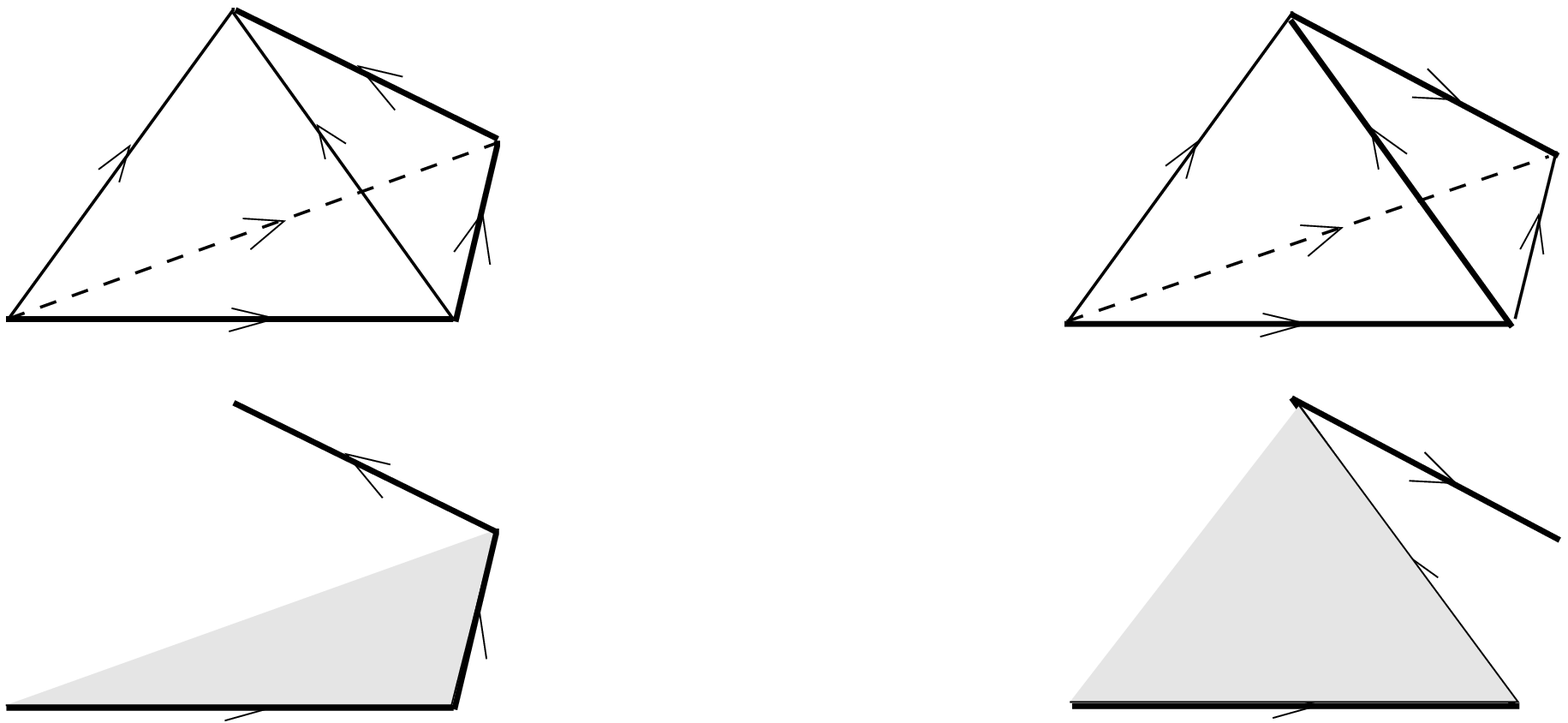}}
\end{equation}
(Here, an orientation of a tetrahedron is defined by orienting one of its faces 
and by specifying a normal vector to that face, which is said to point ``upwards''.) 
Thus, all simplices of $L_S$ have been given an orientation so that
we can consider the cell chain complex of $L_S$.
The fundamental class $[L_S]$ defines a $3$-chain in that complex.

\begin{figure}
{\labellist \small \hair 2pt
\pinlabel {$\stackrel{W}{\longrightarrow}$}  at 253 217
\pinlabel {$\Sigma_G$} [bl] at 116 266
\pinlabel {$\Sigma_{G'}$} [rb] at 385 266
\pinlabel {\scriptsize \begin{tabular}{r}  ``invisible''\\ face\end{tabular}} [tl] at 50 98
\pinlabel {\scriptsize \begin{tabular}{l}  ``visible''\\ face\end{tabular}} [tr] at 450 98
\pinlabel {$\Delta^W$} [bl] at 268 140
\pinlabel {$\star$} at 1 218 
\pinlabel {$\star$} at 145 218 
\pinlabel {$\star$} at 73 291
\pinlabel {$\star$} at 73 147
\pinlabel {$\star$} at 433 290
\pinlabel {$\star$} at 361 219
\pinlabel {$\star$} at 433 146
\pinlabel {$\star$} at 506 218
\endlabellist
\includegraphics[scale=0.4]{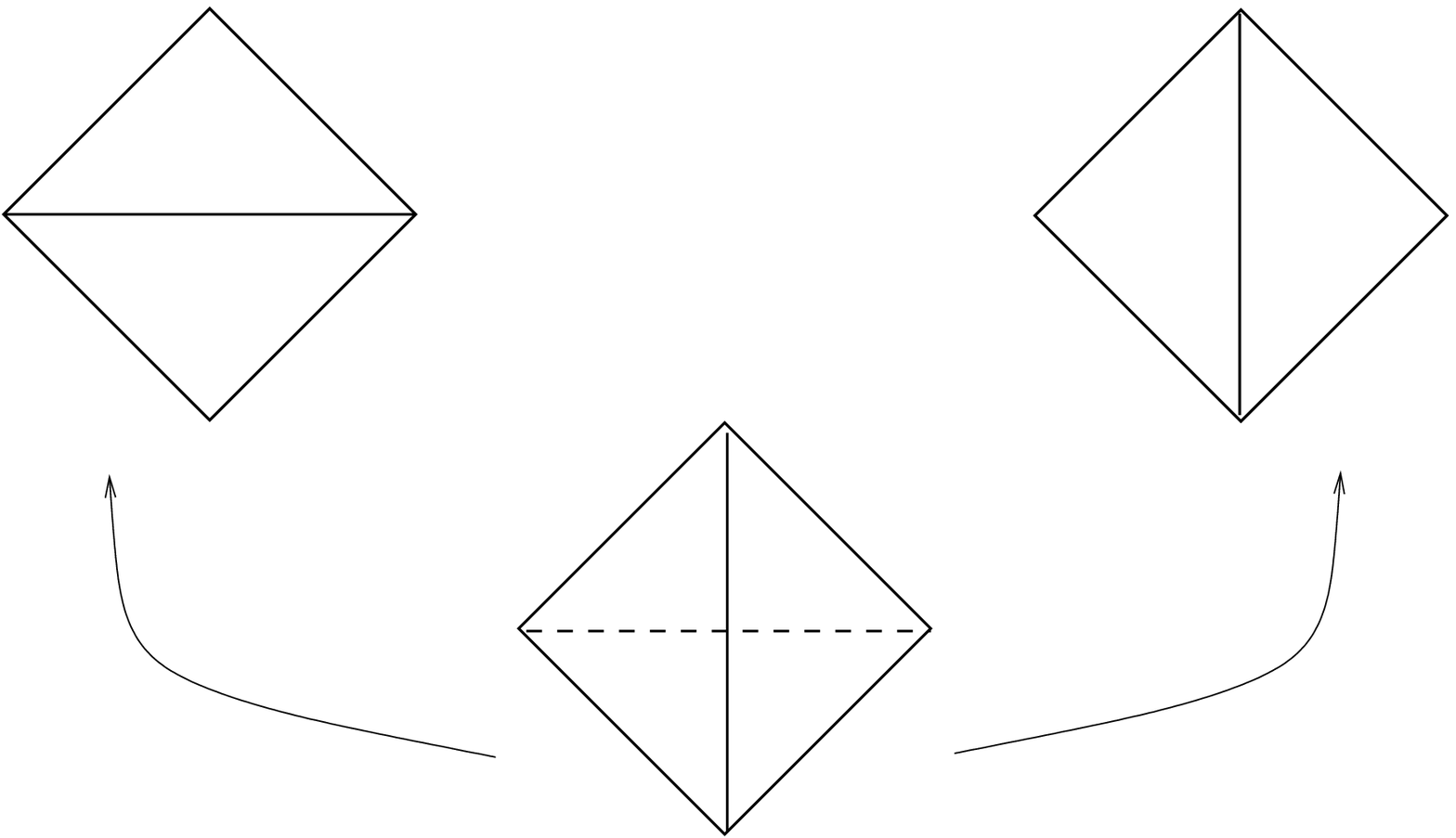}}
\caption{The $3$-dimensional interpretation of a Whitehead move.}
\label{fig:Whitehead}
\end{figure}

\begin{proposition}
\label{prop:T_S}
For any sequence of Whitehead moves among $\pi$-marked trivalent bordered fatgraphs
$$
S=\left(G_-=G_1 \stackrel{W_1}{\longrightarrow} G_2 \stackrel{W_2}{\longrightarrow} 
\cdots \stackrel{W_r}{\longrightarrow}  G_{r+1}=G_+\right),
$$ 
there is a canonical cellular map $h_S: L_S \to \K^\Delta(\pi,1)$
inducing the canonical isomorphism at the level of fundamental groups. Moreover, we have
$$
h_S\left([L_S]\right) = \left(T_{W_1} + \cdots + T_{W_r}\right)
+ b_{G_-} \ \in \Bar_3(\pi)
$$
where $b_{G_-}$ only depends on $G_-$ and belongs to $\Img(\partial_4)$.
\end{proposition}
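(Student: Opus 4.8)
The plan is to build the cellular map $h_S$ by the same local recipe used in Lemma~\ref{lem:description_Z_G}, extended over the tetrahedra of $L_S$. First I would recall that $L_S$ carries a preferred admissible edge orientation, so every simplex of $L_S$ inherits a total ordering of its vertices (there is a unique source and a unique sink among the four vertices of each tetrahedron, forcing a linear order). For each tetrahedron $\Delta$ of $L_S$, ``before gluing'' it is a standard $3$-simplex with vertices $0^\Delta < 1^\Delta < 2^\Delta < 3^\Delta$; reading off the homotopy classes $\gp{g}^\Delta_{01},\gp{g}^\Delta_{12},\gp{g}^\Delta_{23} \in \pi$ of the three consecutive looped edges, I map $\Delta$ affinely to the desingularized $3$-simplex $\Delta^{\sigma(\Delta)}_{\operatorname{b.g}}$ of $\widetilde{\K^\Delta}(\pi,1)$ over the cell $(\gp{g}^\Delta_{01}|\gp{g}^\Delta_{12}|\gp{g}^\Delta_{23})$, where $\sigma(\Delta) := (1,\gp{g}^\Delta_{01},\gp{g}^\Delta_{01}\gp{g}^\Delta_{12},\gp{g}^\Delta_{01}\gp{g}^\Delta_{12}\gp{g}^\Delta_{23})$. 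Because $\widetilde{\K^\Delta}(\pi,1)$ is obtained by gluing simplices precisely along the face maps $\delta_i^\sigma$, and the admissible edge orientation makes the vertex orderings of adjacent simplices of $L_S$ compatible on common faces, these affine maps agree on overlaps and assemble into a well-defined cellular map $h_S: L_S \to \K^\Delta(\pi,1)$. That $h_S$ induces the canonical isomorphism on $\pi_1$ follows exactly as in Lemma~\ref{lem:description_Z_G}: each $1$-simplex $e^*$ is sent to the loop $([e^*])$, and the $1$-skeleton of $L_G$ together with these identifications recovers the standard presentation of $\pi$.

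Next I would compute $h_S([L_S])$. Write $[L_S] = \sum_\Delta \varepsilon''(\Delta)\cdot \Delta$ over all tetrahedra $\Delta$ of $L_S$, where $\varepsilon''(\Delta) = \pm 1$ compares the orientation of $\Delta$ given by rule~(\ref{eq:tetrahedron_orientation}) with the ambient orientation of $L$. By construction, $h_S([L_S]) = \sum_\Delta \varepsilon''(\Delta)\cdot (\gp{g}^\Delta_{01}|\gp{g}^\Delta_{12}|\gp{g}^\Delta_{23})$. The tetrahedra of $L_S$ fall into three families: the $r$ tetrahedra $\Delta^{W_1},\dots,\Delta^{W_r}$ coming from the Whitehead moves; the $12g-3$ tetrahedra subdividing the prisms $\Delta^v\times[-1,1]$ for the internal vertices $v$ of $G_-$; and the $2$ tetrahedra filling the solid torus $\partial\Sigma\times D^2$. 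For the first family, I would verify that the sign and the triple of group elements read off from $\Delta^{W_j}$ via the edge orientation reproduce exactly the data $(s,\gp{k}|\gp{h}|\gp{g})$ listed in Figure~\ref{fig:T_W}, so that the contribution of $\Delta^{W_j}$ to $h_S([L_S])$ is precisely $T_{W_j}$; this is a case-by-case check over the $12$ types of Whitehead move, entirely parallel to the check that $\partial_3(T_W) = Z_G - Z_{G'}$ in Lemma~\ref{lem:M}. The remaining two families depend only on $G_-$ (the prism subdivision is dictated by the edge orientation of $\Sigma_{G_-}$, and the solid-torus triangulation is fixed once and for all), so their total contribution is a $3$-chain $b_{G_-}$ depending only on $G_-$; setting $h_S([L_S]) = (T_{W_1}+\cdots+T_{W_r}) + b_{G_-}$ then \emph{defines} $b_{G_-}$.

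Finally I would show $b_{G_-} \in \Img(\partial_4)$. The clean argument is to apply $\partial_3$: since $h_S$ is a chain map and $\partial_3[L_S] = [\partial L_S] = [\Sigma_+] - [\Sigma_-] = [\Sigma_{G_+}] - [\Sigma_{G_-}]$ (the solid-torus part contributes nothing to the boundary because $L$ is closed along $\partial\Sigma$), Lemma~\ref{lem:description_Z_G} gives $\partial_3 h_S([L_S]) = -Z_{G_+} + Z_{G_-}$. On the other hand $\partial_3(T_{W_1}+\cdots+T_{W_r}) = Z_{G_-} - Z_{G_+}$ by Lemma~\ref{lem:M} (telescoping). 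Subtracting, $\partial_3 b_{G_-} = 0$, so $b_{G_-}$ is a $3$-cycle; since $H_3(\pi) = 0$, it lies in $\Img(\partial_4)$. I expect the main obstacle to be the bookkeeping in the first family: checking that the orientation conventions (\ref{eq:triangle_orientation}), (\ref{eq:tetrahedron_orientation}) and the ``upwards'' rule combine so that $\Delta^{W_j}$ contributes exactly $T_{W_j}$ with the correct sign, uniformly over all $12$ types in Figure~\ref{fig:T_W}, rather than $T_{W_j}$ up to a global sign or a boundary term — in particular one must be careful that the ``invisible/visible face'' convention of Figure~\ref{fig:Whitehead} is consistent with the edge-orientation rule for orienting tetrahedra, and that no extra contribution from the prism tetrahedra sneaks into the per-move count.
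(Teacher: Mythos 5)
Your proposal is correct and follows essentially the same route as the paper: the same local construction of $h_S$ from the admissible edge orientation, the same decomposition of the tetrahedra into the Whitehead part (contributing $T_{W_1}+\cdots+T_{W_r}$ by comparison with Figure \ref{fig:T_W}) and the part coming from $L_{G_-}$ (defining $b_{G_-}$), and the same appeal to $H_3(\pi)=0$ to place $b_{G_-}$ in $\Img(\partial_4)$. The only cosmetic difference is that you show $\partial_3 b_{G_-}=0$ by subtracting the telescoping boundary of the $T_{W_j}$ from $\partial_3 h_S([L_S])$, whereas the paper observes directly that the $L_{G_-}$ summand has boundary $Z_{G_-}-Z_{G_-}=0$; both are fine.
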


\noindent
It follows that $\widetilde{M}(\{S\}) \in \Bar_3(\pi)/\Img(\partial_4)$ is represented by $h_S\left([L_S]\right)$, 
and this is our topological interpretation of the map $\widetilde{M}$.

\begin{proof}
The cellular map $h_S: L_S \to \K^\Delta(\pi,1)$ is defined in a way similar to the map $h_G$ in Lemma \ref{lem:Z_G}.
Thus, we consider each tetrahedron $\Delta^t$ of $L_S$ before gluing, 
and the corresponding  desingularization map $\Delta_{\operatorname{b.g}}^t \to \Delta^t$.
The admissible edge orientation of $L_S$ induces a total ordering on the set of vertices of $\Delta_{\operatorname{b.g}}^t$,
which allows us to denote them by $0^t,1^t,2^t,3^t$:

$$
{\labellist \small \hair 2pt
\pinlabel {or} at 273 59
\pinlabel {$0^t$} [tr] at 4 8
\pinlabel {$1^t$} [tl] at 152 8
\pinlabel {$2^t$} [bl] at 168 70
\pinlabel {$3^t$} [b] at 80 110
\pinlabel {$0^t$} [tr] at 360 7
\pinlabel {$1^t$} [tl] at 509 5
\pinlabel {$2^t$} [br] at 436 108
\pinlabel {$3^t$} [tl] at 528 68
\endlabellist
\includegraphics[scale=0.5]{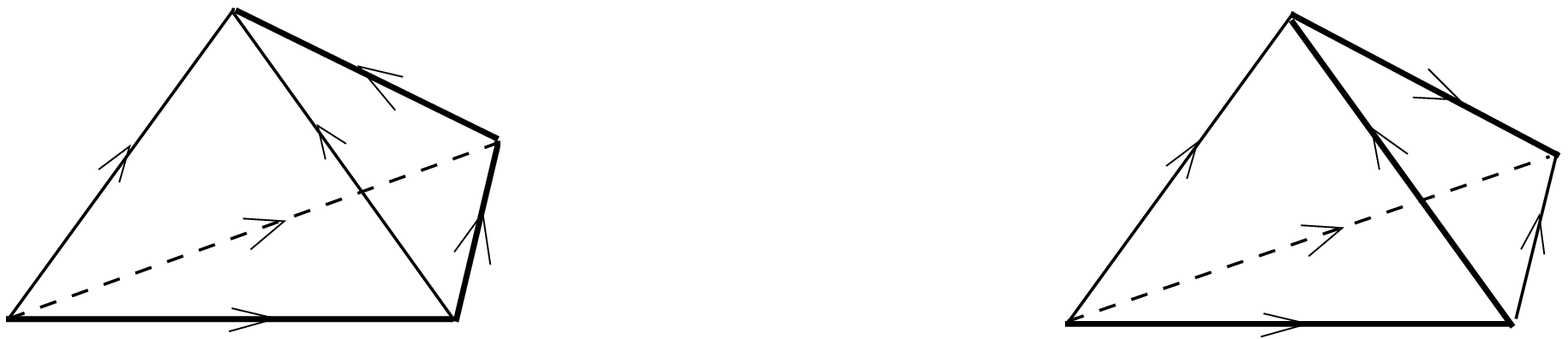}}
$$
\vspace{0.1cm}

\noindent
Let $\gp{g}_{01}^t,\gp{g}_{12}^t,\gp{g}_{23}^t\in \pi$ be the homotopy classes of the looped edges of $\Delta^t$
corresponding to the $1$-simplices $(0^t,1^t),(1^t,2^t),(2^t,3^t)$ of $\Delta_{\operatorname{b.g}}^t$.
We denote 
$$
\sigma(t) := (1,\gp{g}_{01}^t, \gp{g}_{01}^t\gp{g}_{12}^t,\gp{g}_{01}^t\gp{g}_{12}^t\gp{g}_{23}^t) \in \pi^4
$$ 
and we consider the desingularization map $\Delta^{\sigma(t)}_{\operatorname{b.g}}\to (\gp{g}_{01}^t|\gp{g}_{12}^t|\gp{g}_{23}^t)$
induced by the projection $\widetilde{\K^\Delta}(\pi,1) \to \K^\Delta(\pi,1)$.
The affine isomorphism $\Delta^t_{\operatorname{b.g}} \to \Delta^{\sigma(t)}_{\operatorname{b.g}}$ defined by $i^t \mapsto i^{\sigma(t)}$ 
induces a map $\Delta^t \to (\gp{g}_{01}^t|\gp{g}_{12}^t|\gp{g}_{23}^t)$.
Doing this for each tetrahedron $t$ of $L_S$, we obtain a cellular map $h_S: L_S \to \K^\Delta(\pi,1)$
which induces the canonical isomorphism  at the level of fundamental groups.

Using the same notations, we can compute the image by $h_S$ of the fundamental class 
$$
[L_S] = \sum_t \varepsilon'(t) \cdot \Delta^t \ \in C_3(L_S),
$$
where the sign $\varepsilon'(t)=\pm 1$ compares the orientation (\ref{eq:tetrahedron_orientation}) 
with the product orientation of the $3$-manifold $L_S \cong \Sigma \times [-1,1]$. We obtain
\begin{equation}
\label{eq:fundamental_class}
h_S\left([L_S]\right) = \sum_t \varepsilon'(t) \cdot (\gp{g}_{01}^t|\gp{g}_{12}^t|\gp{g}_{23}^t) \ \in \Bar_3(\pi). 
\end{equation}
Recall that the tetrahedra $\Delta^t$ of $L_S$ are of two types: some come from $L_{G_-}$
while the other come from the Whitehead moves $W_1,\dots,W_r$. 
Thus, the sum (\ref{eq:fundamental_class}) decomposes into two summands.
The first summand only depends on $G_-$ and is denoted by $b_{G_-}$.
It has boundary $Z_{G_-}-Z_{G_-}$ and, so, $b_{G_-}$ must belong to $\Img(\partial_4)$ since $H_3(\pi)=0$.
The second summand is exactly $T_{W_1} + \cdots + T_{W_r}$, as it can be checked by comparison with Figure \ref{fig:T_W}.
\end{proof}

Proposition \ref{prop:T_S} also provides a topological interpretation 
to the crossed homomorphism $\widetilde{M}_G: \Mcg(\Sigma) \to \Bar_3(\pi)/\Img(\partial_4)$.
The \emph{closure} $C_f$ of an $f\in \Mcg(\Sigma)$ is the manifold
obtained from the ``lens'' cylinder $L$ by gluing $\Sigma_+$ to $\Sigma_-$ with $f$. 
Thus, $C_f$ comes with an open book decomposition whose binding is connected.
(It is well-known that any connected closed oriented $3$-manifold can be obtained in that way.)
Recall from \cite{Thomas,Swarup} that the oriented homotopy type 
of $C_f$ is determined by its fundamental group
$$
\pi_1\left(C_f\right) :=\pi_1\left(C_f,\star\right)
\simeq \pi\left/\left\langle f_*(\gp{g})\cdot \gp{g}^{-1}\vert \gp{g}\in \pi\right\rangle_{\operatorname{normal}}\right.
$$
together with the homology class
$$
\mu\left(C_f\right) := 
h_*\left([C_f]\right) \ \in H_3\left(\pi_1(C_f)\right),
$$
where $h:C_f \to \K\left(\pi_1(C_f),1\right)$ 
is a map inducing an isomorphism at the level of fundamental groups.

\begin{corollary}
\label{cor:oriented_homotopy_type}
Let $f\in \Mcg(\Sigma)$ be represented by a sequence of Whitehead moves
\begin{equation}
\label{eq:f_sequence}
G=G_1 \stackrel{W_1}{\longrightarrow} G_2 \stackrel{W_2}{\longrightarrow} \cdots \stackrel{W_r}{\longrightarrow}  G_{r+1}=f(G)
\end{equation}
among $\pi$-marked trivalent bordered fatgraphs.
Then $\mu(C_f)$ is the reduction of
$\widetilde{M}_G(f)= \left[T_{W_1}+ \cdots + T_{W_r}\right] \in B_3(\pi)/\Img(\partial_4)$
to $B_3\left(\pi_1\left(C_f\right)\right)/\Img(\partial_4)$ by the projection $\pi \to \pi_1\left(C_f\right)$.
\end{corollary}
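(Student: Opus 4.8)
The plan is to read off $\mu(C_f)$ from the $3$-dimensional picture of Proposition~\ref{prop:T_S} applied to the sequence (\ref{eq:f_sequence}). First I would form, as in that proposition, the edge-oriented singular triangulation $L_S$ of the ``lens'' cylinder $L$ together with the cellular map $h_S\colon L_S\to\K^\Delta(\pi,1)$; recall that $L_S$ restricts to $\Sigma_{G_-}=\Sigma_G$ on $\Sigma_-$ and to $\Sigma_{G_+}=\Sigma_{f(G)}$ on $\Sigma_+$, and that these two boundary surfaces carry the very same triangulation, namely the one dual to the underlying fatgraph $G$. I would then observe that the closure $C_f$ is obtained from $L_S$ by gluing the top surface $\Sigma_{G_+}$ onto the bottom surface $\Sigma_{G_-}$ along the combinatorial identity: since the marking of $f(G)$ is the marking of $G$ post-composed with $f$, this combinatorial gluing is, under the identifications $\Sigma_\pm\cong\Sigma$, precisely the gluing of $\Sigma_+$ to $\Sigma_-$ ``with $f$'' that defines $C_f$. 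Hence $L_S$ descends to a one-vertex singular triangulation of $C_f$ with the same set of tetrahedra and the same induced orientations (\ref{eq:tetrahedron_orientation}); in particular the fundamental class $[C_f]=\sum_t\varepsilon'(t)\cdot\Delta^t$ is the same signed sum of tetrahedra as $[L_S]$.

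Next I would push $h_S$ through this quotient. On an oriented internal edge $e^*$ of $\Sigma_-$ the map $h_S$ records $\varpi_G(e)\in\pi$, whereas on the corresponding edge of $\Sigma_+$ it records $\varpi_{f(G)}(e)=f_*\big(\varpi_G(e)\big)$. Since $f_*(\gp{g})$ and $\gp{g}$ coincide in $\pi_1(C_f)\simeq\pi\big/\big\langle f_*(\gp{g})\cdot\gp{g}^{-1}\mid\gp{g}\in\pi\big\rangle_{\operatorname{normal}}$, the composite of $h_S$ with the projection $\pi\to\pi_1(C_f)$ assigns matching elements to the two copies of each boundary edge of $L_S$, and therefore descends to a cellular map $h\colon C_f\to\K^\Delta\big(\pi_1(C_f),1\big)$. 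Because $h_S$ induces the canonical isomorphism on fundamental groups and all the identifications and projections involved are compatible, $h$ induces on $\pi_1(C_f)$ the identity, so it may be used to compute $\mu(C_f)=h_*([C_f])$.

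It then only remains to evaluate $h_*([C_f])$. By construction $h$ sends each tetrahedron $\Delta^t$ of $L_S$ to the $3$-cell of $\K^\Delta(\pi_1(C_f),1)$ obtained from $(\gp{g}_{01}^t|\gp{g}_{12}^t|\gp{g}_{23}^t)$ by projecting its labels to $\pi_1(C_f)$, so comparison with (\ref{eq:fundamental_class}) shows that $h_*([C_f])$ is the image of $h_S([L_S])\in\Bar_3(\pi)$ under $\Bar_3(\pi)\to\Bar_3(\pi_1(C_f))$. Proposition~\ref{prop:T_S} gives $h_S([L_S])=(T_{W_1}+\cdots+T_{W_r})+b_{G_-}$ with $b_{G_-}\in\Img(\partial_4)$; reducing modulo $\Img(\partial_4)$ and using that $\widetilde{M}_G(f)=[T_{W_1}+\cdots+T_{W_r}]$, we get that $\mu(C_f)$ is the image of $\widetilde{M}_G(f)$ under $\pi\to\pi_1(C_f)$, which is the assertion. (Consistently, the image of $T_{W_1}+\cdots+T_{W_r}$ in $\Bar_3(\pi_1(C_f))$ is a cycle, since its boundary $Z_G-f_*(Z_G)$ dies there because $f_*$ becomes the identity modulo $\langle f_*(\gp{g})\cdot\gp{g}^{-1}\rangle$; this matches the fact that $C_f$ is closed.)

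The step requiring the most care is the first one: one must check that the combinatorial top-to-bottom identification of $L_S$, with its preferred edge orientations and induced simplex orientations, really reproduces the closed oriented $3$-manifold $C_f$ (and not its orientation-reverse), and that the two $\pi$-markings on the glued surface are intertwined precisely by $f_*$. Once this identification and the consequent descent of $h_S$ are established, the evaluation of $\mu(C_f)$ follows at once from Proposition~\ref{prop:T_S}.
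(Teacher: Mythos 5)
Your proof follows the same route as the paper: glue the top of the triangulated lens $L_S$ to its bottom to get a one-vertex triangulation of $C_f$, check that $h_S$ descends to a map to $\K^\Delta(\pi_1(C_f),1)$, and read off $\mu(C_f)$ from Proposition \ref{prop:T_S}. The paper's own proof is just a terser version of this; your added care about the descent of $h_S$ (matching $\varpi_G$ with $f_*\circ\varpi_G$ modulo the relations defining $\pi_1(C_f)$) and about orientations fills in exactly the details the paper leaves implicit.
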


\begin{proof}
Denote by $S$ the sequence of Whitehead moves (\ref{eq:f_sequence}).
The triangulation $L_S$ induces a one-vertex triangulation 
$(C_f)_S$ of $C_f$ by gluing
the faces of $\Sigma_{f(G)} \cong \Sigma_+$ to the faces of $\Sigma_G \cong \Sigma_-$.
Moreover, the map $h_S: L_S \to \K^\Delta(\pi,1)$ defined in Proposition \ref{prop:T_S}
induces a cellular map $h_S: (C_f)_S \to \K^\Delta\left(\pi_1(C_f),1\right)$.
We conclude that
$$
\mu\left(C_f\right)
= h_{S,*}\left(\left[(C_f)_S\right] \right)
= \left[\hbox{reduction of }b_{G} + T_{W_1} + \cdots + T_{W_r} \right]
$$
is represented by the reduction of $T_{W_1} + \cdots + T_{W_r}\in B_3(\pi)$ to $B_3\left(\pi_1\left(C_f\right)\right)$.
\end{proof}

\begin{remark}
Assume  that $f$ belongs to $\Mcg[k]$ so that the group $\pi_1(C_f)/ \Gamma_{k+1}\pi_1(C_f)$ 
is canonically isomorphic to $\pi/\Gamma_{k+1} \pi$.
It follows from Corollary \ref{cor:oriented_homotopy_type} that
the class $M_k(f) \in H_3(\pi/\Gamma_{k+1}\pi)$ is the reduction of $\mu(C_f)$ 
to $H_3\left(\pi_1(C_f)/\Gamma_{k+1}\pi_1(C_f)\right)$.
This topological interpretation of the $k$-th Morita homomorphism is due to Heap \cite{Heap}.
\end{remark}

\section{Infinitesimal extensions of Morita homomorphisms}

\label{sec:infinitesimal}

We have obtained in \S \ref{sec:tautological} an extension 
$$
\widetilde{M}_k: \Pt/\Mcg[k] \longrightarrow \Bar_3(\pi/\Gamma_{k+1} \pi)/\Img(\partial_4)
$$
of $M_k$ to the Ptolemy groupoid. 
However, as noticed in Remark \ref{rem:infinite}, 
the target of $\widetilde{M}_k$ is a free abelian group of \emph{infinite} rank.
That contrasts with the fact that $H_3(\pi/\Gamma_{k+1} \pi)$, the target  of $M_k$, is finitely generated.
This section is aimed at correcting that defect and, for this, we will replace groups by their Malcev Lie algebras. 
For the reader's convenience, a few facts about Malcev Lie algebras and their homology are recalled in the Appendix.

\subsection{Construction}

Let $k\geq 1$ be an integer and denote by $\Malcev(\pi/\Gamma_{k+1} \pi)$
the Malcev Lie algebra of the group $\pi/\Gamma_{k+1} \pi$.
We defined in \cite{Massuyeau} an ``infinitesimal'' version
$$
m_k: \Mcg[k] \longrightarrow H_3\left(\Malcev(\pi/\Gamma_{k+1} \pi); \Q\right)
$$
of the $k$-th Morita homomorphism. Its definition is similar to $M_k$, but with bar complexes of groups
replaced by Koszul complexes of Lie algebras. 
It is proved in \cite{Massuyeau} that the homomorphisms $M_k$ and $m_k$ are equivalent:
\begin{equation}
\label{eq:M_to_m}
\xymatrix{
\Mcg[k] \ar[r]^-{M_k} \ar[rrd]_-{m_k} & H_3\left(\pi/\Gamma_{k+1} \pi \right)\ \ar@{>->}[r] 
& H_3\left(\pi/\Gamma_{k+1} \pi ; \Q\right) \ar[d]^-{\P}_-\simeq \\
& & H_3\left(\Malcev(\pi/\Gamma_{k+1} \pi) ; \Q\right) 
}
\end{equation}
Here, $\P$ is a canonical isomorphism due to Pickel \cite{Pickel}.
Suslin and Wodzicki introduced in \cite{SW} a canonical chain map
$$
\SW: \Bar_*(\pi/\Gamma_{k+1} \pi) \longrightarrow \Lambda^* \Malcev(\pi/\Gamma_{k+1} \pi),
$$
from the bar complex of $\pi/\Gamma_{k+1} \pi$ to the Koszul complex of $\Malcev(\pi/\Gamma_{k+1}\pi)$, 
which induces $\P$ in homology. (See the Appendix for more details.)
We need this chain map  in degree $3$ to define the  abelian subgroup
\begin{equation}
\label{eq:target}
T_k(\pi) := \frac{\SW_3\big(\Bar_3(\pi/\Gamma_{k+1}\pi)\big)+\Img(\partial_4)}{\Img(\partial_4)}
\end{equation}
of $\Lambda^3 \Malcev(\pi/\Gamma_{k+1}\pi)/\Img(\partial_4)$.

\begin{theorem}
\label{th:infinitesimal}
The map $\widetilde{m}_k$ defined by
$$
\xymatrix{
\Pt/\Mcg[k] \ar[r]^-{\widetilde{M}_k} \ar@/_1.2pc/@{-->}[rr]_-{\widetilde{m}_k} & 
\frac{\Bar_3(\pi/\Gamma_{k+1} \pi)}{\Img(\partial_4)} \ar[r]^-{\SW_3}&  T_k(\pi)
}
$$
is a groupoid extension of $M_k: \Mcg[k] \to H_3\left(\pi/\Gamma_{k+1}\pi\right)$
whose target $T_k(\pi)$ is a finitely generated free abelian group.
\end{theorem}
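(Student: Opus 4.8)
The plan is to recognize $\widetilde{m}_k$ as a solution of the groupoid extension problem of the Introduction for $\Gamma := \Mcg[k]$, $K := \MFat/\Mcg[k]$, $A := H_3(\pi/\Gamma_{k+1}\pi)$ and $\varphi := M_k$ (all groups carrying trivial actions, as in Theorem~\ref{th:tautological}), with $\widetilde{A} := T_k(\pi)$. Write $N := \pi/\Gamma_{k+1}\pi$. I would first note that, $\SW$ being a chain map, $\SW_3$ carries $\Img(\partial_4)\subset\Bar_3(N)$ into $\Img(\partial_4)\subset\Lambda^3\Malcev(N)$ and hence descends to a group homomorphism $\Bar_3(N)/\Img(\partial_4)\to\Lambda^3\Malcev(N)/\Img(\partial_4)$ whose image is $T_k(\pi)$ by \eqref{eq:target}; thus $\widetilde{m}_k=\SW_3\circ\widetilde{M}_k$ is a well-defined groupoid homomorphism into $T_k(\pi)$. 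The two things to check are then that $A$ embeds into $T_k(\pi)$ compatibly with $M_k$ and $\widetilde{m}_k$, and that $T_k(\pi)$ is finitely generated free abelian.

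The inclusion $A\hookrightarrow\widetilde{A}$ and the commutativity of the extension square are formal. Since $\SW$ is a chain map, $\SW_3$ maps the subgroup $H_3(N)=\Ker(\partial_3)/\Img(\partial_4)\subset\Bar_3(N)/\Img(\partial_4)$ into $\Ker(\partial_3)/\Img(\partial_4)=H_3(\Malcev(N);\Q)$, and by the defining property of $\SW$ (recalled in the Appendix) the resulting map $H_3(N)=H_3(N;\Z)\to H_3(\Malcev(N);\Q)$ is the composite of the natural coefficient homomorphism $H_3(N;\Z)\to H_3(N;\Q)$ with Pickel's isomorphism $\P$. As $H_3(N;\Z)$ is finitely generated free abelian by \cite{IO}, the coefficient homomorphism is injective, so $\SW_3$ is injective on $H_3(N)$; its image is a subgroup $A_k\subset T_k(\pi)$, which furnishes the embedding $A=H_3(N)\cong A_k\hookrightarrow\widetilde{A}=T_k(\pi)$. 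Applying $\SW_3$ to the commutative square \eqref{eq:tautological_extension} of Theorem~\ref{th:tautological} then shows that the restriction of $\widetilde{m}_k$ to $\Mcg[k]$ equals $M_k$ followed by this embedding (and, by \eqref{eq:M_to_m}, coincides with the infinitesimal Morita homomorphism $m_k$), so the extension square commutes.

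The genuine content, and the step I expect to be the main obstacle, is the finite generation of $T_k(\pi)$. First, $T_k(\pi)$ is a subgroup of $\Lambda^3\Malcev(N)/\Img(\partial_4)$, which is a finite-dimensional $\Q$-vector space since the Koszul differential is $\Q$-linear; hence $T_k(\pi)$ is torsion-free, and by \eqref{eq:target} it is a quotient of $\SW_3(\Bar_3(N))$. It therefore suffices to show that the subgroup $\SW_3(\Bar_3(N))$ of the $\Q$-vector space $\Lambda^3\Malcev(N)$ is contained in $\tfrac{1}{D}\Lambda$ for some lattice $\Lambda\subset\Lambda^3\Malcev(N)$ and some integer $D\ge 1$: such a subgroup is finitely generated, and so then is its quotient $T_k(\pi)$. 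For this I would use the explicit description of $\SW$ from the Appendix, namely that $\SW_3(\gp{g}_1|\gp{g}_2|\gp{g}_3)$ is produced from $\log\gp{g}_1,\log\gp{g}_2,\log\gp{g}_3\in\Malcev(N)$ by one fixed universal expression — a $\Q$-linear combination of wedges of iterated Lie brackets — whose coefficients have denominators bounded in terms of the nilpotency class of $N$ alone; together with the fact that, on choosing a Mal'cev basis of $N$ and invoking the Baker--Campbell--Hausdorff formula, $\log(N)$ lies in $\tfrac{1}{D_0}L_0$, where $L_0\subset\Malcev(N)$ is the integral Lie lattice spanned by the iterated brackets of that basis and $D_0\ge 1$ is a suitable integer. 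Combining the two bounds yields $\SW_3(\Bar_3(N))\subset\tfrac{1}{D}\,\Lambda^3 L_0$ for a suitable $D$, proving that $T_k(\pi)$ is finitely generated; being also torsion-free, it is free abelian, which finishes the proof. All other steps are formal; the only delicate part is the bookkeeping of denominators for $\SW_3$ and for $\log(N)$.
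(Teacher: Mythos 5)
Your proposal follows the paper's proof essentially step for step: the commutativity of the extension square is obtained by composing diagram (\ref{eq:tautological_extension}) with $\SW_3$ and using that $H_3(\pi/\Gamma_{k+1}\pi)$ is free abelian, hence injects into $H_3(\pi/\Gamma_{k+1}\pi;\Q)\simeq H_3(\Malcev(\pi/\Gamma_{k+1}\pi);\Q)$, while the finite generation of $T_k(\pi)$ is established exactly as in Lemma \ref{lem:finite_generation} by combining a universal expression for $\SW_3(\gp{f}\vert\gp{g}\vert\gp{h})$ in the logarithms with a Baker--Campbell--Hausdorff (Dynkin) bound on the denominators of $\log(\pi/\Gamma_{k+1}\pi)$. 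The only point to tighten is the source of your ``fixed universal expression'': it does not come from an explicit Appendix formula (which is given only in the abelian case), but from the functoriality of $\SW$ applied to the free nilpotent group of class $k$ on three generators, whose value $\SW_3(\gp{x}\vert\gp{y}\vert\gp{z})$ is a single element of a finite-dimensional $\Q$-vector space, expanded in a Hall basis and then specialized to arbitrary triples.
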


\begin{proof}
Diagram (\ref{eq:tautological_extension}) and the definition of $\widetilde{m}_k$ show that the following square is commutative:
\begin{equation}
\label{eq:infinitesimal_extension}
\newdir{ >}{{}*!/-5pt/\dir{>}}
\xymatrix{
\pi_1\left(\MFat/\Mcg[k],\{G\}\right) \ar@{^{(}->}[d] \ar@{=}[r]^-G & \Mcg[k] \ar[r]^-{M_k} & H_3\left(\pi/\Gamma_{k+1}\pi\right) \ar@{ >->}[d]^-{\SW_3} \\
\pi_1^{\operatorname{cell}}\left(\MFat/\Mcg[k]\right) \ar@{=}[r]  & \Pt/\Mcg[k] \ar[r]_-{\widetilde{m}_k}
& T_k(\pi)
} 
\end{equation}
The right-hand map is the restriction of $\SW_3: \Bar_3(\pi/\Gamma_{k+1} \pi)/\Img(\partial_4) \to T_k(\pi)$ 
to the subgroup $H_3\left(\pi/\Gamma_{k+1}\pi\right)$,
and it is injective by the following commutative diagram:
$$
\newdir{ >}{{}*!/-5pt/\dir{>}}
\xymatrix{
H_3\left(\pi/\Gamma_{k+1}\pi\right)   \ar@{ >->}[d]   \ar@{^{(}->}[r] & \frac{\Bar_3(\pi/\Gamma_{k+1} \pi)}{\Img(\partial_4)} \ar[r]_{\SW_3} & 
T_k(\pi) \ar@{^{(}->}[d]  \\
H_3\left(\pi/\Gamma_{k+1}\pi;\Q\right) \ar[r]^-\simeq_-P & H_3\left(\Malcev(\pi/\Gamma_{k+1} \pi); \Q\right) \ar@{^{(}->}[r] 
& \frac{\Lambda^3 \Malcev(\pi/\Gamma_{k+1} \pi)}{ \Img(\partial_4)}
}
$$
Thus, (\ref{eq:infinitesimal_extension}) exactly tells us that $\widetilde{m}_k$ is an extension of $M_k$ to the Ptolemy groupoid.
The fact that $T_k(\pi)$ is finitely generated is proved in the next subsection.
\end{proof}

Using the same idea, we can extend $M_k$ to a $T_k(\pi)$-valued crossed homomorphism on the mapping class group.
For each $\pi$-marked trivalent bordered fatgraph $G$, we consider the composition
$$
\xymatrix{
\Mcg \ar[r]^-{\widetilde{M}_{G,k}} \ar@/_1.3pc/@{-->}[rr]_-{\widetilde{m}_{G,k}} & 
\Bar_3(\pi/\Gamma_{k+1}\pi)/\Img(\partial_4) \ar[r]^-{\SW_3} & T_k(\pi)
}
$$
where the map $\widetilde{M}_{G,k}$ is defined in Corollary \ref{cor:mapping_class_group}.

\begin{corollary}
\label{cor:mapping_class_group_infinitesimal}
For every $k\geq 1$, $\widetilde{m}_{G,k}: \Mcg \to T_k(\pi)$
is an extension of the $k$-th Morita homomorphism to the mapping class group,
with values in a finitely generated free abelian group.
Moreover, $\widetilde{m}_{G,k}$ is a crossed homomorphism whose homology class
$$
\left[\widetilde{m}_{G,k}\right] \in H^1\left(\Mcg;T_k(\pi)\right)
$$
does not depend on the choice of $G$.
\end{corollary}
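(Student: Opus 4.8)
The plan is to mimic the proof of Corollary \ref{cor:mapping_class_group}, transporting every assertion there through the chain map $\SW_3$. The only genuinely new input needed is that $T_k(\pi)$ is a finitely generated free abelian group, which is deferred to the next subsection (and may be invoked here); all the crossed-homomorphism bookkeeping will follow formally.

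First I would observe that $\widetilde{m}_{G,k}$ is indeed an extension of $M_k$: this is immediate from Theorem \ref{th:infinitesimal}, whose diagram (\ref{eq:infinitesimal_extension}) shows that $\widetilde{m}_k$ restricted to $\pi_1(\MFat/\Mcg[k],\{G\})\cong\Mcg[k]$ equals $\SW_3\circ M_k$, combined with the commutative square already used in the proof of Corollary \ref{cor:mapping_class_group} relating the map $G:\Mcg\hookrightarrow\Pt$ to the inclusion $\Mcg[k]\hookrightarrow\Pt/\Mcg[k]$. Since $\SW_3$ is injective on $H_3(\pi/\Gamma_{k+1}\pi)$ (established inside the proof of Theorem \ref{th:infinitesimal}), one may safely regard $M_k$ itself, rather than merely $\SW_3\circ M_k$, as the homomorphism being extended.

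Next I would check the crossed-homomorphism property. It suffices to show $\widetilde{m}_{G,k}=\SW_3\circ\widetilde{M}_{G,k}$ is a crossed homomorphism, and since $\widetilde{M}_{G,k}$ is already one by Corollary \ref{cor:mapping_class_group}, the point is only that $\SW_3: \Bar_3(\pi/\Gamma_{k+1}\pi)/\Img(\partial_4)\to T_k(\pi)$ is $\Mcg$-equivariant. This is where a small argument is required: $\Mcg$ acts on both sides through its action on $\pi/\Gamma_{k+1}\pi$ (it fixes $\zeta$, hence descends to an action by automorphisms), and $\SW$ is a \emph{functorial} chain map in the group, so it commutes with the induced automorphisms of $\Bar_3$ and of $\Lambda^3\Malcev(\pi/\Gamma_{k+1}\pi)$; since $\Img(\partial_4)$ and the image of $\SW_3$ are preserved, $\SW_3$ descends to an equivariant map of quotients. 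Given that, for $f,f'\in\Mcg$ one computes
$$
\widetilde{m}_{G,k}(f'\circ f)=\SW_3\big(\widetilde{M}_{G,k}(f'\circ f)\big)=\SW_3\big(\widetilde{M}_{G,k}(f')+f'\cdot\widetilde{M}_{G,k}(f)\big)=\widetilde{m}_{G,k}(f')+f'\cdot\widetilde{m}_{G,k}(f),
$$
using the crossed-homomorphism identity for $\widetilde{M}_{G,k}$ and equivariance of $\SW_3$.

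Finally, independence of the cohomology class $[\widetilde{m}_{G,k}]\in H^1(\Mcg;T_k(\pi))$ from $G$ follows the same way: if $G'$ is another $\pi$-marked trivalent bordered fatgraph and $U$ a sequence of Whitehead moves from $G'$ to $G$, then by the argument in Corollary \ref{cor:mapping_class_group} the cocycles $\widetilde{M}_{G'}$ and $\widetilde{M}_G$ differ by the coboundary of $\widetilde{M}(U)$; applying $\SW_3$ and using its equivariance again, $\widetilde{m}_{G',k}$ and $\widetilde{m}_{G,k}$ differ by the coboundary of $\SW_3(\widetilde{M}(U)\bmod\Gamma_{k+1}\pi)\in T_k(\pi)$, hence represent the same class. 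The statement that $T_k(\pi)$ is finitely generated free abelian is quoted from the next subsection. I do not expect any serious obstacle: the one point meriting care is the equivariance/functoriality of the Suslin--Wodzicki map under the $\Mcg$-action, which should be read off from the functoriality properties of $\SW$ recalled in the Appendix.
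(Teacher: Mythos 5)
Your proposal is correct and follows exactly the route the paper intends: the paper in fact omits a proof of this corollary, treating it as an immediate consequence of Corollary \ref{cor:mapping_class_group}, Theorem \ref{th:infinitesimal} and Lemma \ref{lem:finite_generation}, which is precisely the composition-with-$\SW_3$ argument you spell out. You correctly isolate the only point needing verification, namely that $T_k(\pi)$ is an $\Mcg$-submodule and that $\SW_3$ is $\Mcg$-equivariant, and your justification via the functoriality of the Suslin--Wodzicki chain map (Theorem \ref{th:PSW}) is the right one.
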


\noindent
As mentioned in the Introduction, a result similar to Corollary \ref{cor:mapping_class_group_infinitesimal}
is obtained by Day in \cite{Day1,Day2} with a different approach of Malcev Lie algebras.

\subsection{Finite generation}

This subsection is devoted to the proof of the following, which we used in Theorem \ref{th:infinitesimal}.

\begin{lemma}
\label{lem:finite_generation}
The  abelian group $T_k(\pi)$, defined by (\ref{eq:target}), is finitely generated.
\end{lemma}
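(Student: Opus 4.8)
The plan is to show that $T_k(\pi)$ sits inside a finitely generated abelian group and hence is itself finitely generated. Write $\overline{\pi} := \pi/\Gamma_{k+1}\pi$ and $\Malcev := \Malcev(\overline\pi)$, a nilpotent Lie algebra over $\Q$ of finite dimension. The key structural observation is that the Suslin--Wodzicki chain map $\SW_*$ is built from the bar resolution in a way that only involves iterated Baker--Campbell--Hausdorff products and rational coefficients with \emph{bounded denominators} in each fixed degree: there is an integer $N = N(k)$ such that $N\cdot \SW_3$ carries the \emph{integral} lattice $\Bar_3(\overline\pi) \subset \Bar_3(\overline\pi)\otimes \Q$ into the integral lattice $\Lambda^3 \Malcev_{\Z}$ spanned over $\Z$ by wedges of a fixed Malcev basis $x_1,\dots,x_d$ of $\Malcev$ (here $\Malcev_{\Z}$ denotes the $\Z$-span of that basis, which is a free abelian group of finite rank $\binom{d}{3}$). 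I would first make this denominator-boundedness precise by inspecting the explicit formula for $\SW_3$ recalled in the Appendix; since $\overline\pi$ is finitely generated and nilpotent, only finitely many Malcev coordinates and finitely many BCH-correction terms occur, so such an $N$ exists.

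Granting this, the image $\SW_3\big(\Bar_3(\overline\pi)\big)$ is contained in $\tfrac1N \Lambda^3\Malcev_\Z$, which is a finitely generated free abelian group. Consequently
$$
\SW_3\big(\Bar_3(\overline\pi)\big) + \Img(\partial_4) \ \subset\ \tfrac1N \Lambda^3 \Malcev_\Z + \Img(\partial_4) \ \subset\ \Lambda^3 \Malcev,
$$
and the group on the left is a subgroup of the finitely generated group $\tfrac1N\Lambda^3\Malcev_\Z + \Img(\partial_4)$, provided the latter is finitely generated. For that, note $\Img(\partial_4)\subset \Lambda^3\Malcev$ is the boundary subspace of the rational Koszul complex; one checks (again using that $\Malcev$ is finite-dimensional and that $\SW$ is a chain map from the bar complex, whose $\partial_4$-image is finitely generated after applying $\SW_4$) that $\Img(\partial_4)\cap \Lambda^3\Malcev_\Z$ has finite index in... rather, more cleanly: let $L := \tfrac1N\Lambda^3\Malcev_\Z$, a finitely generated free abelian group, and observe that $\Img(\partial_4)$ is the $\Q$-span inside $\Lambda^3\Malcev$ of finitely many elements $\partial_4(\SW_4(b_j))$, each lying in $L$ after enlarging $N$ if necessary. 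Then $L + \Img(\partial_4)$ is generated over $\Z$ by $L$ together with those finitely many vectors — but $\Img(\partial_4)$ is a $\Q$-subspace, not a $\Z$-module, so one must instead argue at the level of the quotient.

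The cleanest route, therefore, is to pass to the quotient directly: the projection $q: \Lambda^3\Malcev \to \Lambda^3\Malcev/\Img(\partial_4)$ restricted to the finitely generated free abelian group $L = \tfrac1N\Lambda^3\Malcev_\Z$ has image $q(L)$, which is finitely generated; and $T_k(\pi) = q\big(\SW_3(\Bar_3(\overline\pi))\big) \subseteq q(L)$ since $\SW_3(\Bar_3(\overline\pi))\subseteq L$. A subgroup of a finitely generated abelian group is finitely generated, so $T_k(\pi)$ is finitely generated, as claimed. (That it is moreover \emph{free} follows because it embeds, via the commutative square in the proof of Theorem~\ref{th:infinitesimal}, into $\Lambda^3\Malcev/\Img(\partial_4)$, which is a $\Q$-vector space and hence torsion-free.)

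The main obstacle is the first step: verifying that $N\cdot\SW_3$ preserves integral lattices for a suitable $N$. This requires unwinding the definition of the Suslin--Wodzicki chain map in degree $3$ — it is assembled from the logarithm map $\overline\pi \to \Malcev$ and antisymmetrization, both of which introduce rational coefficients through the BCH series — and checking that, because $\overline\pi$ is finitely generated torsion-free nilpotent of bounded nilpotency class, only finitely many such coefficients intervene. I expect everything after that to be routine finitely-generated-abelian-group bookkeeping.
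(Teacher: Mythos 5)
Your overall skeleton matches the paper's: show that $\SW_3\bigl(\Bar_3(\pi/\Gamma_{k+1}\pi)\bigr)$ lands in $\tfrac1N$ times a finitely generated lattice, project to the quotient by $\Img(\partial_4)$, and use that a subgroup of a finitely generated abelian group is finitely generated. The endgame (passing directly to the quotient rather than trying to make $L+\Img(\partial_4)$ finitely generated) is handled correctly. But the entire content of the lemma is the step you defer as ``the main obstacle,'' and your proposed route to it does not work as stated. There is no explicit closed formula for $\SW_3$ to ``inspect'' for a general nilpotent group: the Appendix only computes $\SW$ explicitly in the abelian case, and in general the chain map is defined inductively from a contracting homotopy. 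Moreover, saying that ``only finitely many Malcev coordinates and finitely many BCH-correction terms occur'' conflates two distinct difficulties, each of which needs its own argument, since $\Bar_3(\pi/\Gamma_{k+1}\pi)$ has infinitely many generators $(\gp{f}|\gp{g}|\gp{h})$.

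The paper resolves them as follows. First, by \emph{functoriality} of $\SW$, the value $\SW_3(\gp{f}|\gp{g}|\gp{h})$ for arbitrary $\gp{f},\gp{g},\gp{h}\in\pi/\Gamma_{k+1}\pi$ is obtained by specializing the single universal element $\SW_3(\gp{x}|\gp{y}|\gp{z})\in\Lambda^3\Lie^{\Q}_{\leq k}(x,y,z)$ computed over the free nilpotent group of class $k$ on three generators; this is a fixed element of a finite-dimensional space and so involves only finitely many rational coefficients $q_{i_1i_2i_3}$, with a common denominator $Q$. Second --- and this is the point your sketch misses entirely --- one must still show that the subgroup of $\Malcev(\pi/\Gamma_{k+1}\pi)$ generated by $\log(\pi/\Gamma_{k+1}\pi)$ is finitely generated. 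The elements of $\pi/\Gamma_{k+1}\pi$ are words of \emph{arbitrary} length $r$ in a fixed generating set, so their logarithms are truncations of multivariable Baker--Campbell--Hausdorff series in arbitrarily many variables, and a priori the denominators could grow with $r$. The paper's proof uses Dynkin's formula and the estimate $l\cdot\bigl(\sum_{i,j}p_{ij}\bigr)\cdot\prod_{i,j}p_{ij}!\leq |P|^2\cdot(|P|!)^{|P|^2}$ to show that, after truncation at degree $k$, the denominators are bounded uniformly in $r$, whence $\log(\pi/\Gamma_{k+1}\pi)\subset\frac{1}{N_k}\Lie_{\leq k}(z_1,\dots,z_{2g})$. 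Without both ingredients the lattice $L=\tfrac1N\Lambda^3\Malcev_{\Z}$ containing the image of $\SW_3$ has not been produced, so the proof as written has a genuine gap at its central step.
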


\begin{proof}
Let $\F_{\leq k}(\gp{x},\gp{y},\gp{z})$ be the free nilpotent group of class $k$ generated by $\{\gp{x},\gp{y},\gp{z}\}$:
this is the $k$-th nilpotent quotient $\F(\gp{x},\gp{y},\gp{z})/\Gamma_{k+1}\F(\gp{x},\gp{y},\gp{z})$
of the free group $\F(\gp{x},\gp{y},\gp{z})$ on three generators.
Let also $\Lie_{\leq k}^\Q(x,y,z)$ be the free nilpotent Lie $\Q$-algebra of class $k$ generated by $\{x,y,z\}$:
we have $\Lie_{\leq k}^\Q(x,y,z) = \Lie^\Q(x,y,z)/\Gamma_{k+1} \Lie^\Q(x,y,z)$
where $\Lie^\Q(x,y,z)$ is the free Lie $\Q$-algebra  on three generators and
$\Gamma_{k+1} \Lie^\Q(x,y,z)$ is the $(k+1)$-st term of its lower central series.
The Malcev Lie algebra $\Malcev\left(\F_{\leq k}(\gp{x},\gp{y},\gp{z})\right)$ can be identified with 
$\Lie_{\leq k}^\Q(x,y,z)$ by setting $x:=\log(\gp{x}),y:=\log(\gp{y})$ and $z:=\log(\gp{z})$.
We \emph{choose} a basis $\{e_i\}_{i\in I}$ of the $\Q$-vector space $\Lie_{\leq k}^\Q(x,y,z)$,
such that each $e_i$ is an iterated bracket $e_i(x,y,z)$ of $x,y,z$ and the indexing set $I$ is totally ordered.
For instance, we can take a Hall basis relative to the set $\{x,y,z\}$ (see \cite{Bourbaki}) 
and, more specifically, we can choose that defined by the lexicographic order:
$$
\begin{array}{l}
x < y < z  \\
\! [x,y] < [x,z] < [y,z]  \\
\! [x,[x,y]] < [x,[x,z]] < [y,[x,y]]< [y,[x,z]]< [y,[y,z]] <  [z,[x,y]] < [z,[x,z]] < [z,[y,z]] \\
\dots \hbox{ etc.}
\end{array}
$$
Thus, there exist some numbers $q_{i_1i_2i_3} \in \Q$ indexed by $i_1<i_2<i_3 \in I$ such that
\begin{equation}
\label{eq:basis}
\SW_3(\gp{x}\vert \gp{y} \vert \gp{z}) = \sum_{i_1<i_2<i_3} 
q_{i_1i_2i_3} \cdot e_{i_1}(x,y,z) \wedge e_{i_2}(x,y,z) \wedge e_{i_3}(x,y,z)\ \in \Lambda^3 \Lie_{\leq k}^\Q(x,y,z).
\end{equation}

For any $\gp{f},\gp{g},\gp{h} \in \pi/\Gamma_{k+1} \pi$, there is a unique group map
$\rho: \F_{\leq k}(\gp{x},\gp{y},\gp{z}) \to \pi/\Gamma_{k+1} \pi$ defined by
$\rho(\gp{x}):=\gp{f}, \rho(\gp{y}) := \gp{g}, \rho(\gp{z}) := \gp{h}$.
An important property of the chain map $\SW$ is its functoriality \cite{SW}, 
which gives the following commutative square:
$$
\xymatrix{
\Bar_3\left(\F_{\leq k}(\gp{x},\gp{y},\gp{z})\right) \ar[r]^-{\SW_3} \ar[d]_-{\Bar_3(\rho)} & 
\Lambda^3 \Lie_{\leq k}^\Q(x,y,z) \ar[d]^-{\Lambda^3 \Malcev(\rho)}\\
\Bar_3\left(\pi/\Gamma_{k+1} \pi\right) \ar[r]_-{\SW_3} & \Lambda^3 \Malcev\left(\pi/\Gamma_{k+1} \pi\right)
}
$$
We deduce from (\ref{eq:basis}) that
\begin{equation}
\label{eq:basis_bis}
\SW_3(\gp{f}\vert \gp{g} \vert \gp{h}) = \sum_{i_1<i_2<i_3} 
q_{i_1i_2i_3} \cdot e_{i_1}(f,g,h) \wedge e_{i_2}(f,g,h) \wedge e_{i_3}(f,g,h)
\end{equation}
where $f:=\log(\gp{f}),g:=\log(\gp{g}), h:=\log(\gp{h})$.

\begin{quote}
\textbf{Claim.}
\emph{The subgroup of $\Malcev(\pi/\Gamma_{k+1} \pi)$ generated by the subset
$$
\log\left(\pi/\Gamma_{k+1} \pi\right) := 
\big\{\log(\gp{g}) \vert \gp{g} \in \pi/\Gamma_{k+1} \pi \big\}
$$
is finitely generated.}
\end{quote}

Assuming this, let $\{f_j\}_{j\in J}$ be a finite generating set for that subgroup of $\Malcev(\pi/\Gamma_{k+1} \pi)$. 
Thus, any element $f=\log(\gp{f})$ of $\log\left(\pi/\Gamma_{k+1} \pi\right)$
can be written as a linear combination with integer coefficients of the $f_j$ (with $j\in J$).
We deduce from  equation (\ref{eq:basis_bis}), which is valid for any $\gp{f},\gp{g},\gp{h} \in \pi/\Gamma_{k+1} \pi$, that
$$
\SW_3\big(\Bar_3(\pi/\Gamma_{k+1}\pi)\big) \subset \frac{1}{Q}
\Lambda^3 \big\langle f_j|j\in J \big\rangle_{\operatorname{Lie}}
$$
where $Q$ is the lowest common denominator of the $q_{i_1 i_2 i_3}$ (with $i_1<i_2<i_3 \in I$)
and $\langle f_j|j\in J\rangle_{\operatorname{Lie}}$ 
is the Lie subring of $\Malcev(\pi/\Gamma_{k+1} \pi)$ generated by the $f_j$ (with $j\in J$).
We conclude that the abelian group $\SW_3\left(\Bar_3(\pi/\Gamma_{k+1}\pi)\right)$
is finitely generated, and the same conclusion applies to $T_k(\pi)$.

It remains to prove the above claim. For this, we fix a basis $\{\gp{z}_1,\dots,\gp{z}_{2g}\}$ of $\pi$ 
and we set  $z_i := \log(\gp{z}_i)$ for all $i=1,\dots,2g$.
Thus, $\Malcev(\pi/\Gamma_{k+1} \pi)$ is identified  
with the free nilpotent Lie algebra $\Lie_{\leq k}^\Q(z_1,\dots,z_{2g})$ of class $k$, 
and $\log(\pi/\Gamma_{k+1} \pi)$ corresponds to the subset 
$$
L := \big\{ \bch(\varepsilon_1 \cdot z_{i_1},\dots, \varepsilon_r \cdot z_{i_r}) \mod \Gamma_{k+1}
\left\vert r\geq 1, \varepsilon_1,\dots,\varepsilon_r =\pm 1, i_1,\dots,i_r = 1,\dots, 2g\right.\big\}
$$
of $\Lie_{\leq k}^\Q(z_1,\dots,z_{2g})$. Here,  
$$
\bch(u_1,\dots,u_r) := \log\left(\exp(u_1) \cdots \exp(u_r)\right) \ \in \widehat{\Lie}^\Q(u_1,\dots,u_r) 
$$
denotes the multivariable Baker--Campbell--Hausdorff series which, before truncation, 
lives in the degree completion of the free Lie algebra. 
Dynkin's formula (see \cite{Bourbaki}) gives
\begin{equation}
\label{eq:Dynkin}
\bch(u_1,\dots,u_r) =
\sum_{l\geq 1} \frac{(-1)^{l-1}}{l}\sum_{P \in \mathcal{P}_{r,l}}
\frac{1}{\displaystyle \big(\sum_{i,j} p_{ij}\big) \cdot \prod_{i,j} p_{ij}!}  
\cdot \left[u_1^{p_{11}} \cdots u_r^{p_{r1}} \cdots u_1^{p_{1l}} \cdots u_r^{p_{rl}} \right]
\end{equation}
where the second sum is over the set of matrices
$$
\mathcal{P}_{r,l} := \left\{P=\left(p_{ij}\right) \in \operatorname{Mat}(r\times l;\Z): 
p_{ij} \geq 0, p_{1j}+ \cdots + p_{rj}>0\right\}
$$
and
$[u_1^{p_{11}} \cdots u_r^{p_{r1}} \cdots u_1^{p_{1l}} \cdots u_r^{p_{rl}}]$
denotes the right-to-left bracketing of the word inside.
For $r,l\geq 1$ and $P\in \mathcal{P}_{r,l}$, we denote by $n(P)$ the number of $i=1,\dots,r$ such that $\sum_{j} p_{ij} >0$, 
and we set $|P| := \sum_{i,j} p_{i,j}$. Since $l\leq |P|$ and $n(P) \leq |P|$, we have
$$
l \cdot\big(\sum_{i,j} p_{ij}\big)  \cdot \prod_{i,j} p_{ij}! \leq l \cdot |P| \cdot \left(|P| !\right)^{l\cdot n(P)}
\leq |P|^2 \cdot  \left(|P| !\right)^{|P|^2}.
$$
It follows that, when the series (\ref{eq:Dynkin}) is truncated up to the order $k$, 
the denominators of its coefficients are bounded uniformly with respect to the number of variables $r$:
let $N_k$ be the least common multiple of those integers.
We deduce that
$$
L \subset \frac{1}{N_k} \cdot \Lie_{\leq k}(z_1,\dots,z_{2g})
$$
where $\Lie_{\leq k}(z_1,\dots,z_{2g})= \Lie(z_1,\dots,z_{2g})/\Gamma_{k+1}\Lie(z_1,\dots,z_{2g})$
is the free nilpotent Lie ring of class $k$ generated by $\{z_1,\dots,z_{2g}\}$.
We conclude that the subgroup of $\Lie_{\leq k}^\Q(z_1,\dots,z_{2g})$ 
spanned by $L$ is finitely generated, which proves the Claim.
\end{proof}

\subsection{Example: the abelian case}

\label{subsec:first_Morita}

Let us apply Theorem \ref{th:infinitesimal} to the case $k=1$.
Thus, we consider the abelian group $H=\pi/\Gamma_2 \pi$.
The first Morita homomorphism is a map
$$
M_1: \Torelli \longrightarrow H_3(H) \simeq \Lambda^3 H
$$
where $\Lambda^3 H$ is identified to $H_3(H)$ in the usual way:
a trivector $h_1 \wedge h_2 \wedge h_3$ corresponds to the
homology class of 
$\sum_{\sigma \in \mathfrak{S}_3} \varepsilon(\sigma) 
\cdot (\gp{h}_{\sigma(1)}\vert \gp{h}_{\sigma(2)} \vert \gp{h}_{\sigma(3)})$.
Here and for clarity, an element of $H$ is denoted by $h$ or by $\gp{h}$
depending on whether $H$ is regarded as a $\Z$-module or as a group.

\begin{corollary}
\label{cor:M_1}
There is a groupoid homomorphism $\widetilde{m}_1: \Pt/\Torelli \to \frac{1}{6} \Lambda^3H$ defined by
\begin{equation}
\label{eq:M_1_formula}
\widetilde{m}_1\left(
\begin{array}{c}
{\labellist \small \hair 1pt
\pinlabel {$\displaystyle \mathop{\longrightarrow}^W$} at 157 30
\pinlabel {$h_1$} [tr] at 10 58
\pinlabel {$h_2$} [br] at 5 5
\pinlabel {$h_3$} [bl] at 103 8
\pinlabel {$h_1$} [tr] at 219 58
\pinlabel {$h_2$} [br] at 217 7
\pinlabel {$h_3$} [bl] at 307 5
\endlabellist
\includegraphics[scale=0.5]{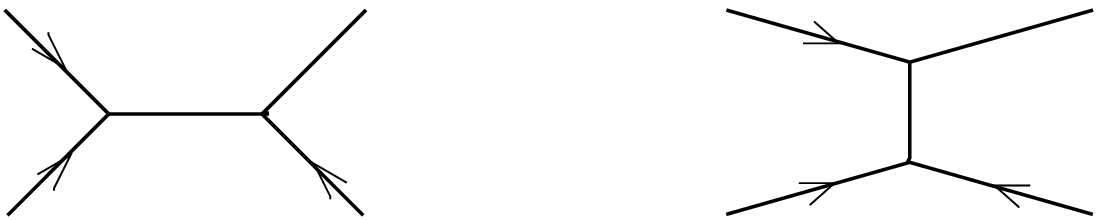}}
\end{array}
\right) = \frac{1}{6} \cdot h_1 \wedge h_2 \wedge h_3
\end{equation}
on each Whitehead move $W$ between $H$-marked trivalent bordered fatgraphs.
This is an extension of $M_1$ to the Torelli groupoid;
more precisely, the following square commutes for every $\pi$-marked trivalent bordered fatgraph $G$:
\begin{equation}
\label{eq:M_1}
\xymatrix{
\pi_1\left(\MFat/\Torelli,\{G\}\right) \ar@{^{(}->}[d] \ar@{=}[r]^-G & \Torelli \ar[r]^-{M_1} &  \Lambda^3 H \ar@{^{(}->}[d]\\
\pi_1^{\operatorname{cell}}\left(\MFat/\Torelli\right) \ar@{=}[r] & \Pt/\Torelli  \ar[r]_-{\widetilde{m}_1} & \frac{1}{6} \Lambda^3H.
}
\end{equation}
\end{corollary}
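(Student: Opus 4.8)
The plan is to specialize the general machinery of Theorem~\ref{th:infinitesimal} to $k=1$ and then unwind the definition of the Suslin--Wodzicki map in degree~$3$ on a single bar generator. First I would observe that since $H=\pi/\Gamma_2\pi$ is abelian, its Malcev Lie algebra $\Malcev(H)$ is simply the abelian Lie $\Q$-algebra $H\otimes\Q$, the logarithm is the identity (up to the rational scaling built into $H\otimes\Q$), and the Koszul complex $\Lambda^*\Malcev(H)$ has zero differential. Consequently $\Img(\partial_4)=0$ in $\Lambda^3\Malcev(H)$, so the target $T_1(\pi)$ of $\widetilde m_1$ is just the subgroup $\SW_3(\Bar_3(H))\subset\Lambda^3(H\otimes\Q)$. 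Thus the whole content of the corollary reduces to computing $\SW_3$ on a generator $(\gp h_1|\gp h_2|\gp h_3)$ of $\Bar_3(H)$ and checking it lands in $\frac16\Lambda^3H$.

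Next I would carry out that computation. The Suslin--Wodzicki chain map (recalled in the Appendix) induces Pickel's isomorphism $\P$ in homology, and in the abelian case it is the antisymmetrization-type map coming from the universal enveloping algebra identification $\Tens(H\otimes\Q)\cong\U(\Malcev(H))$ composed with the Poincar\'e--Birkhoff--Witt projection; alternatively one uses the explicit formula of \cite{SW} in low degrees. The key arithmetic fact is that $\SW_3(\gp h_1|\gp h_2|\gp h_3)=\tfrac16\,h_1\wedge h_2\wedge h_3$: the coefficient $\tfrac16$ is exactly the $q$-coefficient appearing in \eqref{eq:basis} when the free nilpotent Lie algebra in three generators is truncated at class one, where the only relevant Hall basis element of $\Lambda^3$ is $x\wedge y\wedge z$ and $\bch$ contributes only its linear term. (This is the $k=1$ instance of the general bound $Q$ from the proof of Lemma~\ref{lem:finite_generation}, here $Q=6$.) Plugging this into the definition of $\widetilde m_1=\SW_3\circ\widetilde M_1$ and recalling from Lemma~\ref{lem:M} that on an $H$-marked Whitehead move $W$ of the depicted type one has $T_W=\pm(\gp h_a|\gp h_b|\gp h_c)$ with the sign and order dictated by Figure~\ref{fig:T_W}, the total antisymmetrization produces exactly $\tfrac16\,h_1\wedge h_2\wedge h_3$, independently of which of the twelve local types the move is, since permuting the arguments of a bar generator changes $\SW_3$ by the sign of the permutation while the Whitehead-move sign $s$ compensates. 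This establishes formula~\eqref{eq:M_1_formula} and shows $\widetilde m_1$ takes values in $\tfrac16\Lambda^3H$.

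Then the existence and well-definedness of the groupoid homomorphism $\widetilde m_1:\Pt/\Torelli\to\tfrac16\Lambda^3H$, together with the commutativity of square~\eqref{eq:M_1} and the injectivity of the right-hand vertical map, follow immediately by specializing diagram~\eqref{eq:infinitesimal_extension} and the commutative diagram in the proof of Theorem~\ref{th:infinitesimal} to $k=1$: the vertical map $H_3(H)=\Lambda^3 H\hookrightarrow\tfrac16\Lambda^3 H$ is the inclusion of the lattice spanned by $h_1\wedge h_2\wedge h_3$ into its $\tfrac16$-rescaling, which is the $k=1$ case of $\SW_3$ restricted to $H_3(\pi/\Gamma_2\pi)$ composed with Pickel's $\P$; injectivity is clear since $\tfrac16\Lambda^3H$ is torsion-free. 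One should also note that $\widetilde m_1$ descends to $\Pt/\Torelli$ because its defining formula only uses the $H$-marking, not the full $\pi$-marking, which is consistent with the observation in \S\ref{subsec:first_Morita} that objects of $\MFat/\Torelli$ are $H$-marked fatgraphs.

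The main obstacle I anticipate is pinning down the exact normalization constant, i.e.\ verifying that the Suslin--Wodzicki map in degree~$3$ really yields the factor $\tfrac16$ and not some other rational multiple of the Pickel isomorphism; this requires either a careful tracing through the explicit low-degree formulas for $\SW$ in the Appendix, or the observation that $\SW_3$ must agree with $\tfrac16$ times the standard antisymmetrization $\Bar_3(H)\to\Lambda^3H$, $\,(\gp h_1|\gp h_2|\gp h_3)\mapsto\tfrac16\sum_{\sigma}\varepsilon(\sigma)h_{\sigma(1)}\wedge h_{\sigma(2)}\wedge h_{\sigma(3)}$, because that is the unique (up to scalar) chain map inducing an isomorphism in homology and the scalar is fixed by compatibility with the identification $H_3(H)\simeq\Lambda^3H$ used to define $M_1$. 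A secondary, purely bookkeeping, obstacle is checking sign-consistency across all twelve Whitehead-move types in Figure~\ref{fig:T_W}, but as noted this is automatic once one records that a transposition of bar arguments flips both the sign of $\SW_3$ and the Figure~\ref{fig:T_W} sign $s$. Finally, recovering Morita--Penner's extension \cite{MP} in this case is a matter of comparing formulas and need not be proved here.
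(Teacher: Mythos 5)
Your proposal is correct and follows essentially the same route as the paper: specialize Theorem~\ref{th:infinitesimal} to $k=1$, identify $\Malcev(H)$ with $H\otimes\Q$ (so the Koszul differential vanishes and $\Img(\partial_4)=0$), invoke the abelian-case formula $\SW_3(\gp{h}_1|\gp{h}_2|\gp{h}_3)=\tfrac16\,h_1\wedge h_2\wedge h_3$ from Proposition~\ref{prop:abelian_case} to get $T_1(\pi)\simeq\tfrac16\Lambda^3H$, and read off \eqref{eq:M_1_formula} from the chains $T_W$ of Lemma~\ref{lem:M}, with \eqref{eq:M_1} coming from diagram~\eqref{eq:infinitesimal_extension}. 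Only a cosmetic caveat: in your fallback normalization argument, ``$\tfrac16$ times the standard antisymmetrization'' sends $(\gp{h}_1|\gp{h}_2|\gp{h}_3)$ to $h_1\wedge h_2\wedge h_3$, not to $\tfrac16\,h_1\wedge h_2\wedge h_3$, so the constant should be fixed by the Appendix formula (as in your primary argument) rather than by that remark.
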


\noindent
Recall that $M_1$ coincides with (the opposite of) the first Johnson homomorphism \cite{Morita_abelian}.
Therefore $\widetilde{m}_1$ is essentially the same as Morita and Penner's extension of this homomorphism to the Torelli groupoid \cite{MP}.
See also \S \ref{subsec:first_Johnson}.

\begin{proof}[Proof of Corollary \ref{cor:M_1}]
The abelian Lie algebra $\Malcev(H)$ can be identified with $H \otimes \Q$:
for all $\gp{h} \in H$, $\log(\gp{h})$ corresponds to $h\otimes 1$.
According to Proposition \ref{prop:abelian_case}, we have
$$
\SW_3: \Bar_3(H) \longrightarrow \Lambda^3 \Malcev(H) \simeq \Lambda^3 H\otimes \Q, \ 
(\gp{h}_1 \vert \gp{h}_2 \vert \gp{h}_3) \longmapsto \frac{1}{6} h_1 \wedge h_2 \wedge h_3.
$$
It follows that
$$
T_1(\pi) \simeq \frac{1}{6} \Lambda^3 H \ \subset \Lambda^3 H \otimes \Q.
$$
The map $\Lambda^3 H \simeq H_3(H) \stackrel{\SW_3}{\to} T_1(\pi) \simeq \frac{1}{6} \Lambda^3 H$
is the inclusion $\Lambda^3 H \subset \frac{1}{6} \Lambda^3 H$.
Therefore the map $\widetilde{m}_1$ produced by Theorem \ref{th:infinitesimal} satisfies diagram (\ref{eq:M_1}). 
Formula (\ref{eq:M_1_formula}) for $\widetilde{m}_1(W)$  follows from the definition of the $3$-chain $T_W$ given in Lemma \ref{lem:M}.
\end{proof}

\section{Infinitesimal extensions of Johnson homomorphisms}

We have defined in \S \ref{sec:infinitesimal} an extension of $M_k$ to the Ptolemy groupoid 
$$
\widetilde{m}_k: \Pt/\Mcg[k] \longrightarrow 
T_k(\pi) \subset \frac{\Lambda^3 \Malcev(\pi/\Gamma_{k+1}\pi)}{\Img(\partial_4)}
$$
whose target $T_k(\pi)$ is a finitely generated abelian group. 
In this section, we shall derive from $\widetilde{m}_k$ an extension
of the $k$-th Johnson homomorphism to the Ptolemy groupoid.

\subsection{Construction}

We start by recalling the precise definition of Johnson's homomorphisms \cite{Johnson_abelian,Johnson_survey,Morita_abelian}.
For all $k\geq 1$, let $\rho_k: \Mcg \to \Aut(\pi/\Gamma_{k+1}\pi)$ be the canonical homomorphism: its kernel is, by definition,
the $k$-th term $\Mcg[k]$ of the Johnson filtration. There is a short exact sequence
$$
1 \to \Hom\left(\pi/\Gamma_2 \pi, \Gamma_{k+1} \pi/ \Gamma_{k+2} \pi\right)
\to \Aut(\pi/\Gamma_{k+2} \pi) \to \Aut(\pi/\Gamma_{k+1} \pi)
$$
where  a group homomorphism $t: \pi/\Gamma_2 \pi \to \Gamma_{k+1}\pi/\Gamma_{k+2} \pi$
goes to the automorphism of $\pi/\Gamma_{k+2} \pi$ defined by $\{\gp{x}\} \mapsto \{\gp{x} \cdot t(\{\gp{x}\})\}$.
Thus, the map $\rho_{k+1}$ restricts to a  homomorphism
$$
\tau_k: \Mcg[k] \longrightarrow \Hom(\pi/\Gamma_2 \pi,\Gamma_{k+1} \pi/\Gamma_{k+2} \pi)
\simeq  \Hom(H, \Lie_{k+1}(H)) \simeq H \otimes \Lie_{k+1}(H)
$$
which is called the \emph{$k$-th Johnson homomorphism}.
Here, $\Lie_n(H)$ denotes the degree $n$ part of the free Lie ring over $H$,
which is identified in the canonical way with $\Gamma_n \pi/\Gamma_{n+1} \pi$ \cite{Bourbaki}; 
the second isomorphism in the definition of $\tau_k$ is induced by the duality $H\simeq H^*$ 
defined by $h \mapsto \omega(h,-)$, where $\omega:H \times H \to \Z$ is the intersection pairing.\footnote{If one identifies
$H$ with $H^*$ by $h\mapsto \omega(-,h)$, then the definition of $\tau_k$ differs by a minus sign.
This convention seems to be used in Johnson's and Morita's papers.}

Morita showed in \cite{Morita_abelian} that $M_k$ determines $\tau_k$ in an explicit way.
Similarly, the ``infinitesimal'' version $m_k$ determines $\tau_k$ as follows.
Consider the central extension of Lie algebras
\begin{equation}
\label{eq:extension}
0 \longrightarrow \Lie_{k+1}(H_\Q) \longrightarrow 
\Malcev(\pi/\Gamma_{k+2}\pi) \longrightarrow \Malcev(\pi/\Gamma_{k+1} \pi) \longrightarrow 1
\end{equation}
whose first map is the composition
$$
\Lie_{k+1}(H_\Q) \stackrel{\simeq}{\longrightarrow} 
(\Gamma_{k+1}\pi/ \Gamma_{k+2} \pi) \otimes \Q 
\stackrel{\log \otimes \Q}{\longrightarrow} \Malcev(\pi/\Gamma_{k+2} \pi).
$$
The differential $d^2_{p,q}: E^2_{p,q} \to E^2_{p-2,q+1}$
of the second stage of the Hochschild--Serre spectral sequence associated to (\ref{eq:extension})
$$
E^2_{p,q} \simeq H_p\left(\Malcev(\pi/\Gamma_{k+1} \pi);\Q\right) \otimes \Lambda^q \Lie_{k+1}(H_\Q)
$$
gives for $p=3$ and $q=0$ a homomorphism 
$$
d^2_{3,0}: H_3\left(\Malcev(\pi/\Gamma_{k+1} \pi);\Q\right) \to H_\Q \otimes \Lie_{k+1}(H_\Q).
$$
The following identity is proved in \cite{Massuyeau}:
\begin{equation}
\label{eq:Morita_to_Johnson}
\forall f\in \Mcg[k], \ -d^2_{3,0} \circ m_k (f) =\tau_k(f).
\end{equation}

To derive from $\widetilde{m}_k$ an extension of $\tau_k$ to the $k$-th Torelli groupoid,
we need an extension of the differential $d^2_{3,0}$ introduced by Day in \cite{Day1}. 
His ``extended differential'' is the map
$$
\widetilde{d}^2_{3,0}: \frac{\Lambda^3 \Malcev(\pi/\Gamma_{k+1} \pi)}{\Img(\partial_4)}
\longrightarrow \frac{\Lambda^2 \Malcev(\pi/\Gamma_{k+2}\pi)}{\Gamma_2\Malcev(\pi/\Gamma_{k+2}\pi)\wedge \Gamma_{k+1}\Malcev(\pi/\Gamma_{k+2}\pi)}
$$
defined by the formula
$$
\widetilde{d}^2_{3,0}\left(\{x\}\right) := \{ \partial_3(\widetilde{x})\}
$$
where $\widetilde{x}\in \Lambda^3 \Malcev(\pi/\Gamma_{k+2}\pi)$ is a lift of $x \in \Lambda^3 \Malcev(\pi/\Gamma_{k+1}\pi)$
and $\partial_3$ denotes the boundary in the Koszul complex. Thus, we have the commutative diagram
$$
\xymatrix{
H_3\left(\Malcev(\pi/\Gamma_{k+1} \pi);\Q\right) \ar@{^{(}->}[r] \ar[d]_-{d^2_{3,0}} & 
\frac{\Lambda^3 \Malcev(\pi/\Gamma_{k+1} \pi)}{\Img(\partial_4)} \ar[d]^-{\widetilde{d}^2_{3,0}}\\
H_\Q \otimes \Lie_{k+1}(H_\Q)\ \ar@{>->}[r]_-\iota
& \frac{\Lambda^2 \Malcev(\pi/\Gamma_{k+2}\pi)}{\Gamma_2\Malcev(\pi/\Gamma_{k+2}\pi)\wedge \Gamma_{k+1}\Malcev(\pi/\Gamma_{k+2}\pi)}
}
$$
where the map $\iota$ is induced by the isomorphism
$$
H_\Q \otimes \Lie_{k+1}(H_\Q) \simeq
\frac{\Malcev(\pi/\Gamma_{k+2}\pi)}{\Gamma_2\Malcev(\pi/\Gamma_{k+2}\pi)} \otimes \Lie_{k+1}(H_\Q)
\simeq \frac{\Malcev(\pi/\Gamma_{k+2}\pi) \otimes  
\Gamma_{k+1}\Malcev(\pi/\Gamma_{k+2}\pi)}{\Gamma_2\Malcev(\pi/\Gamma_{k+2}\pi) \otimes  \Gamma_{k+1}\Malcev(\pi/\Gamma_{k+2}\pi)}.
$$
The extended differential allows us to define the subgroup
$$
U_k(\pi) := \widetilde{d}^2_{3,0}\left(T_k(\pi)\right) \subset 
\frac{\Lambda^2 \Malcev(\pi/\Gamma_{k+2}\pi)}{\Gamma_2\Malcev(\pi/\Gamma_{k+2}\pi)\wedge \Gamma_{k+1}\Malcev(\pi/\Gamma_{k+2}\pi)}.
$$
It follows from Lemma \ref{lem:finite_generation} that $U_k(\pi)$ is  finitely generated.

\begin{theorem}
\label{th:Johnson}
The homomorphism $\widetilde{\tau}_k$ defined by
$$
\xymatrix{
\Pt/\Mcg[k] \ar[r]^-{-\widetilde{m}_k} \ar@/_1.2pc/@{-->}[rr]_-{\widetilde{\tau}_k} & 
T_k(\pi) \ar[r]^-{\widetilde{d}^2_{3,0}}&  U_k(\pi)
}
$$
is a groupoid extension of $\tau_k: \Mcg[k] \to \Img(\tau_k)$ 
whose target $U_k(\pi)$ is a finitely generated free abelian group.  
\end{theorem}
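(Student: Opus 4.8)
The plan is to assemble the statement out of ingredients already in place: that $\widetilde{m}_k$ is a groupoid extension of $M_k$ (Theorem \ref{th:infinitesimal}), that $M_k$ and its infinitesimal counterpart $m_k$ determine each other (diagram (\ref{eq:M_to_m})), that $-d^2_{3,0}\circ m_k=\tau_k$ (identity (\ref{eq:Morita_to_Johnson})), that Day's extended differential $\widetilde{d}^2_{3,0}$ restricts to $\iota\circ d^2_{3,0}$ on the image of $H_3(\Malcev(\pi/\Gamma_{k+1}\pi);\Q)$, and that $T_k(\pi)$ is finitely generated (Lemma \ref{lem:finite_generation}). To begin with, $\widetilde{\tau}_k$ is a groupoid homomorphism simply because it is the composite of the groupoid homomorphism $\widetilde{m}_k$ with the group automorphism $x\mapsto-x$ of $T_k(\pi)$ and the abelian group homomorphism $\widetilde{d}^2_{3,0}$; by construction its image is exactly $U_k(\pi)=\widetilde{d}^2_{3,0}(T_k(\pi))$.

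To see that $\widetilde{\tau}_k$ extends $\tau_k$, I would fix a $\pi$-marked trivalent bordered fatgraph $G$, take $f\in\Mcg[k]$ represented by a sequence of Whitehead moves $S\colon G\to\cdots\to f(G)$, and chain the available commutative squares. Diagram (\ref{eq:infinitesimal_extension}) identifies $\widetilde{m}_k(\{S\})$ with the image of $M_k(f)\in H_3(\pi/\Gamma_{k+1}\pi)$ under $\SW_3$; the auxiliary square in the proof of Theorem \ref{th:infinitesimal}, together with (\ref{eq:M_to_m}), then identifies this class, once pushed into $\Lambda^3\Malcev(\pi/\Gamma_{k+1}\pi)/\Img(\partial_4)$, with the image of $m_k(f)\in H_3(\Malcev(\pi/\Gamma_{k+1}\pi);\Q)$. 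Applying $\widetilde{d}^2_{3,0}$, using the commutative square relating it to $d^2_{3,0}$ through the injection $\iota$, and tracking the sign in the definition of $\widetilde{\tau}_k$ against the one in (\ref{eq:Morita_to_Johnson}), I obtain
$$\widetilde{\tau}_k(\{S\}) = \widetilde{d}^2_{3,0}\left(-\widetilde{m}_k(\{S\})\right) = \iota\left(-d^2_{3,0}(m_k(f))\right) = \iota\left(\tau_k(f)\right).$$
Since $\iota$ is injective, it restricts to an injection $\Img(\tau_k)\hookrightarrow U_k(\pi)$, and the square
$$\xymatrix{
\pi_1\left(\MFat/\Mcg[k],\{G\}\right) \ar@{^{(}->}[d] \ar@{=}[r]^-G & \Mcg[k] \ar[r]^-{\tau_k} & \Img(\tau_k) \ar@{^{(}->}[d]^-{\iota} \\
\pi_1^{\operatorname{cell}}\left(\MFat/\Mcg[k]\right) \ar@{=}[r] & \Pt/\Mcg[k] \ar[r]_-{\widetilde{\tau}_k} & U_k(\pi)
}$$
commutes; this is precisely the assertion that $\widetilde{\tau}_k$ is a groupoid extension of $\tau_k\colon\Mcg[k]\to\Img(\tau_k)$.

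It remains to treat the target. The group $U_k(\pi)=\widetilde{d}^2_{3,0}(T_k(\pi))$ is a homomorphic image of $T_k(\pi)$, which is finitely generated by Lemma \ref{lem:finite_generation}, hence $U_k(\pi)$ is finitely generated; and it sits inside $\Lambda^2\Malcev(\pi/\Gamma_{k+2}\pi)/\bigl(\Gamma_2\Malcev(\pi/\Gamma_{k+2}\pi)\wedge\Gamma_{k+1}\Malcev(\pi/\Gamma_{k+2}\pi)\bigr)$, which is a quotient of a $\Q$-vector space by a $\Q$-subspace and so is torsion-free, whence $U_k(\pi)$ is free. The genuinely delicate point is the diagram chase of the previous paragraph: every individual square is borrowed from earlier in the paper or from the cited works, but one must keep careful track of signs (the $-\widetilde{m}_k$ versus the sign in (\ref{eq:Morita_to_Johnson})) and of the injectivity of $\iota$ — and, implicitly, of $\SW_3$ on $H_3$ — so that the commutativity of the square really upgrades to ``$\widetilde{\tau}_k$ is an extension'' and not merely ``$\widetilde{\tau}_k$ agrees with $\tau_k$ after a possibly non-injective comparison of targets.''
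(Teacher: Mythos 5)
Your proposal is correct and follows essentially the same route as the paper: the paper's proof likewise represents $f\in\Mcg[k]$ by a sequence of Whitehead moves, chains the identity $\iota\tau_k(f)=-\iota\,d^2_{3,0}\,m_k(f)=-\widetilde{d}^2_{3,0}\,\widetilde{m}_k(\cdot)$ through diagram (\ref{eq:infinitesimal_extension}), (\ref{eq:M_to_m}) and the square relating $\widetilde{d}^2_{3,0}$ to $d^2_{3,0}$ via $\iota$, and concludes that $\iota$ sends $\Img(\tau_k)$ into $U_k(\pi)$ so that the extension square commutes. Your additional remarks on the injectivity of $\iota$ and $\SW_3$ and on the freeness of $U_k(\pi)$ only make explicit points the paper leaves implicit.
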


\begin{proof}
Let $f\in \Mcg[k]$ and, given a $\pi$-marked trivalent bordered fatgraph $G$, represent $f$ as a sequence of Whitehead moves 
$(G=G_1 \overset{W_1}{\to} G_2 \overset{W_2}{\to} \cdots \overset{W_r}{\to} G_{r+1}=f(G))$. 
According to formula (\ref{eq:Morita_to_Johnson}), we have 
\begin{equation}
\label{eq:Johnson}
\iota \tau_k(f) = - \iota  d^2_{3,0}  m_k(f) = -  \widetilde{d}^2_{3,0} \widetilde{m}_k\left(
(G_1 \overset{W_1}{\to} G_2 \overset{W_2}{\to} \cdots \overset{W_r}{\to} G_{r+1})\mod \Mcg[k]\right).
\end{equation}
Since $\widetilde{m}_k= \SW_3 \circ \widetilde{M}_k$, we deduce that
$\iota \tau_k(f)$ lives in the image of $\widetilde{d}^2_{3,0}\circ \SW_3$, \ie in the group $U_k(\pi)$.
Therefore, $\Img(\tau_k) \subset H \otimes \Lie_{k+1}(H) \subset H_\Q \otimes \Lie_{k+1}(H_\Q)$ is sent by $\iota$
into $U_k(\pi)$. Equation (\ref{eq:Johnson}) shows that the diagram
\begin{equation}
\label{eq:Johnson_extension}
\newdir{ >}{{}*!/-5pt/\dir{>}}
\xymatrix{
\pi_1\left(\MFat/\Mcg[k],\{G\}\right) \ar@{^{(}->}[d] \ar@{=}[r]^-G & \Mcg[k] \ar[r]^-{\tau_k} 
& \Img(\tau_k)  \ar@{ >->}[d]^-{\iota} \\
\pi_1^{\operatorname{cell}}\left(\MFat/\Mcg[k]\right) \ar@{=}[r]  & \Pt/\Mcg[k] \ar[r]_-{\widetilde{\tau}_k}
& U_k(\pi)
} 
\end{equation}
commutes for every $\pi$-marked trivalent bordered fatgraph $G$.
\end{proof}

The next proposition shows that the groupoid extension $\widetilde{\tau}_k$ can be computed from the Suslin--Wodzicki chain map in degree $2$
(whereas the groupoid extension $\widetilde{m}_k$ needs the same chain map in degree $3$).

\begin{proposition}
\label{prop:bivector}
The value of $\widetilde{\tau}_k$ on a Whitehead move $W:G\to G'$ is represented by the bivector
$$
-s\cdot \SW_2\!\Big((\{\gp{h}\}|\{\gp{g}\})-(\{\gp{kh}\}|\{\gp{g}\})+(\{\gp{k}\}|\{\gp{hg}\})-(\{\gp{k}\}|\{\gp{h}\})\Big)
\in \Lambda^2 \Malcev(\pi/\Gamma_{k+2}\pi)
$$
where $\{\gp{g}\},\{\gp{h}\},\{\gp{k}\}\in \pi/\Gamma_{k+2}\pi$ and the sign $s$ are given on Figure \ref{fig:T_W}.
\end{proposition}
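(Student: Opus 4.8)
The plan is to unwind the definition of $\widetilde{\tau}_k$ on a single Whitehead move and then to invoke two structural properties of the Suslin--Wodzicki chain map $\SW$: its naturality and the fact that it is a chain map.

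First I would combine Theorem \ref{th:Johnson}, Theorem \ref{th:infinitesimal} and Lemma \ref{lem:M}. On a Whitehead move $W\colon G\to G'$ between $\pi$-marked trivalent bordered fatgraphs, Lemma \ref{lem:M} (see Figure \ref{fig:T_W}) tells us that $\widetilde{M}_k(W)$ is the class of the $3$-chain $s\cdot(\gp{k}|\gp{h}|\gp{g})$, with $s$ and $\gp{g},\gp{h},\gp{k}$ read off the figure. Since $\widetilde{m}_k=\SW_3\circ\widetilde{M}_k$ and $\widetilde{\tau}_k=\widetilde{d}^2_{3,0}\circ(-\widetilde{m}_k)$, additivity of $\SW_3$ and of $\widetilde{d}^2_{3,0}$ gives
\[
\widetilde{\tau}_k(W)\ =\ -\,s\cdot\widetilde{d}^2_{3,0}\Big(\SW_3\big(\{\gp{k}\}\vert\{\gp{h}\}\vert\{\gp{g}\}\big)\Big),
\]
where $\SW_3$ is here the Suslin--Wodzicki map of $\pi/\Gamma_{k+1}\pi$ and $\{\,\cdot\,\}$ denotes reduction modulo $\Gamma_{k+1}\pi$.

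Next I would evaluate the extended differential $\widetilde{d}^2_{3,0}$, which by definition requires a lift of $\SW_3(\{\gp{k}\}\vert\{\gp{h}\}\vert\{\gp{g}\})$ along $\Malcev$ applied to the projection $q\colon\pi/\Gamma_{k+2}\pi\twoheadrightarrow\pi/\Gamma_{k+1}\pi$. The natural candidate is the element $\SW_3(\{\gp{k}\}\vert\{\gp{h}\}\vert\{\gp{g}\})\in\Lambda^3\Malcev(\pi/\Gamma_{k+2}\pi)$ computed directly for $\pi/\Gamma_{k+2}\pi$; the naturality of $\SW$ applied to $q$ produces the commuting square relating $\SW_3$ for $\pi/\Gamma_{k+2}\pi$ and for $\pi/\Gamma_{k+1}\pi$, which says precisely that this element is such a lift. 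By the definition of $\widetilde{d}^2_{3,0}$ it then follows that $\widetilde{\tau}_k(W)$ is represented by $-s\cdot\partial_3\big(\SW_3(\{\gp{k}\}\vert\{\gp{h}\}\vert\{\gp{g}\})\big)$, where $\partial_3$ now denotes the Koszul boundary on $\Lambda^\ast\Malcev(\pi/\Gamma_{k+2}\pi)$.

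Finally, because $\SW$ is a chain map one has $\partial_3\circ\SW_3=\SW_2\circ\partial_3$, the $\partial_3$ on the right being the bar differential of $\pi/\Gamma_{k+2}\pi$. Computing it from the formula defining $\partial_n$ gives
\[
\partial_3\big(\{\gp{k}\}\vert\{\gp{h}\}\vert\{\gp{g}\}\big)=(\{\gp{h}\}\vert\{\gp{g}\})-(\{\gp{kh}\}\vert\{\gp{g}\})+(\{\gp{k}\}\vert\{\gp{hg}\})-(\{\gp{k}\}\vert\{\gp{h}\}),
\]
and substituting this into the previous display yields exactly the asserted representative. I do not anticipate a real obstacle here: the only subtle point is that $\widetilde{d}^2_{3,0}$ is well defined independently of the choice of lift — hence that using the particular lift coming from $\SW_3$ for $\pi/\Gamma_{k+2}\pi$ is legitimate — but this is part of Day's construction \cite{Day1} of the extended differential already invoked above. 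Once naturality and the chain-map identity are in hand, the rest is merely the bookkeeping of composing $\widetilde{M}_k$, $\SW_3$ and $\widetilde{d}^2_{3,0}$.
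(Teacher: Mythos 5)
Your proposal is correct and follows essentially the same route as the paper's proof: unwind $\widetilde{\tau}_k=-\widetilde{d}^2_{3,0}\circ\SW_3\circ\widetilde{M}_k$ on the chain $s\cdot(\{\gp{k}\}|\{\gp{h}\}|\{\gp{g}\})$, use functoriality of $\SW$ to recognize the $\SW_3$-image computed in $\pi/\Gamma_{k+2}\pi$ as a legitimate lift, and then apply the chain-map identity $\partial_3\SW_3=\SW_2\partial_3$ together with the bar differential formula. No gaps.
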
 

\begin{proof}
We have
$$
\widetilde{\tau}_k\left(W\mod \Mcg[k]\right) = -\widetilde{d}^2_{3,0} \widetilde{m}_k\left(W\mod \Mcg[k]\right)
= -\widetilde{d}^2_{3,0} \SW_3 \widetilde{M}_k\left(W\mod \Mcg[k]\right).
$$
Therefore, by definition of the groupoid map $\widetilde{M}_k$, we get
$$
\widetilde{\tau}_k\left(W\mod \Mcg[k]\right) = -  \widetilde{d}^2_{3,0} \SW_3\big(s\cdot (\{\gp{k}\}|\{\gp{h}\}|\{\gp{g}\})\big)
$$
where $\{\gp{g}\},\{\gp{h}\},\{\gp{k}\} \in \pi/\Gamma_{k+1}\pi$
are the classes of the elements $\gp{g},\gp{h},\gp{k} \in \pi$ shown on Figure \ref{fig:T_W}.
By functoriality of the  Suslin--Wodzicki chain map,  
$\SW_3\big(s\cdot (\{\gp{k}\}|\{\gp{h}\}|\{\gp{g}\})\big) \in \Lambda^3\Malcev(\pi/\Gamma_{k+1}\pi)$ can be lifted to
a $3$-chain  $\SW_3\big(s\cdot (\{\gp{k}\}|\{\gp{h}\}|\{\gp{g}\})\big) \in \Lambda^3\Malcev(\pi/\Gamma_{k+2}\pi)$, 
where  $\{\gp{g}\},\{\gp{h}\},\{\gp{k}\}$ now denote elements of $\pi/\Gamma_{k+2}\pi$. We deduce that
$$
\widetilde{\tau}_k\left(W\mod \Mcg[k]\right) = \left\{- \partial_3 \SW_3\big(s\cdot (\{\gp{k}\}|\{\gp{h}\}|\{\gp{g}\})\big)\right\}
= \left\{- s\cdot \SW_2\partial_3\big(\{\gp{k}\}|\{\gp{h}\}|\{\gp{g}\}\big)\right\}
$$
and the conclusion follows.
\end{proof}

Finally, we obtain  an extension of $\tau_k$ to an $U_k(\pi)$-valued crossed homomorphism on the full mapping class group.
Indeed, for every $\pi$-marked trivalent bordered fatgraph $G$, we can consider the composition
$$
\xymatrix{
\Mcg \ar[r]^-{-\widetilde{m}_{G,k}} \ar@/_1.2pc/@{-->}[rr]_-{\widetilde{\tau}_{G,k}} & 
T_k(\pi) \ar[r]^-{\widetilde{d}^2_{3,0}} & U_k(\pi)
}
$$
where $\widetilde{m}_{G,k}$ is defined in Corollary \ref{cor:mapping_class_group_infinitesimal}.

\begin{corollary}
\label{cor:mapping_class_group_Johnson}
For all $k\geq 1$, $\widetilde{\tau}_{G,k}: \Mcg \to U_k(\pi)$
is an extension of the $k$-th Johnson homomorphism with values in a finitely generated free abelian group.
Moreover, $\widetilde{\tau}_{G,k}$ is a crossed homomorphism whose homology class
$$
\left[\widetilde{\tau}_{G,k}\right] \in H^1\left(\Mcg;U_k(\pi)\right)
$$
does not depend on the choice of $G$.
\end{corollary}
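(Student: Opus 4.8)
The plan is to derive all four assertions from the groupoid extension $\widetilde{\tau}_k$ of Theorem~\ref{th:Johnson} together with the properties of $\widetilde{m}_{G,k}$ recorded in Corollary~\ref{cor:mapping_class_group_infinitesimal}, along exactly the lines by which Corollary~\ref{cor:mapping_class_group} was obtained from Theorem~\ref{th:tautological}. First I would unwind the definitions. Let $G:\Mcg\to\Pt$ be the injection of Remark~\ref{rem:mcg_to_Pt} and $j_G:\Mcg\to\Pt/\Mcg[k]$ its composite with the canonical projection. The defining diagram of $\widetilde{M}_k$ in Theorem~\ref{th:tautological} gives $\widetilde{M}_{G,k}=\widetilde{M}_k\circ j_G$; applying $\SW_3$ and then $-\widetilde{d}^2_{3,0}$, and using $\widetilde{m}_k=\SW_3\circ\widetilde{M}_k$ and $\widetilde{\tau}_k=-\widetilde{d}^2_{3,0}\circ\widetilde{m}_k$, one gets $\widetilde{m}_{G,k}=\widetilde{m}_k\circ j_G$ and hence $\widetilde{\tau}_{G,k}=\widetilde{\tau}_k\circ j_G$. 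Thus $\widetilde{\tau}_{G,k}$ is simply $\widetilde{\tau}_k$ pulled back along $j_G$.

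To see that $\widetilde{\tau}_{G,k}$ extends the $k$-th Johnson homomorphism, I would restrict $j_G$ to $\Mcg[k]$. By the commutative diagram in the proof of Corollary~\ref{cor:mapping_class_group}, that restriction is exactly the canonical identification $\Mcg[k]=\pi_1(\MFat/\Mcg[k],\{G\})$ followed by the inclusion $\pi_1\hookrightarrow\pi_1^{\operatorname{cell}}$; inserting this into diagram~(\ref{eq:Johnson_extension}) of Theorem~\ref{th:Johnson} yields $\widetilde{\tau}_{G,k}|_{\Mcg[k]}=\iota\circ\tau_k$, as required. Finite generation of $U_k(\pi)$ is immediate from Lemma~\ref{lem:finite_generation}, since $U_k(\pi)=\widetilde{d}^2_{3,0}(T_k(\pi))$ is a quotient of a finitely generated group; and $U_k(\pi)$ is free abelian because it sits inside the $\Q$-vector space $\Lambda^2\Malcev(\pi/\Gamma_{k+2}\pi)\big/\big(\Gamma_2\Malcev(\pi/\Gamma_{k+2}\pi)\wedge\Gamma_{k+1}\Malcev(\pi/\Gamma_{k+2}\pi)\big)$, hence is torsion-free.

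For the crossed-homomorphism claim, the one genuinely new ingredient is the $\Mcg$-equivariance of the extended differential $\widetilde{d}^2_{3,0}$, together with the $\Mcg$-stability of $T_k(\pi)$ and $U_k(\pi)$, for the actions induced by the action of $\Mcg$ on the tower $\Malcev(\pi/\Gamma_m\pi)$. This I would check as follows: the central extension~(\ref{eq:extension}) is functorial in $\pi$, so an element of $\Mcg$ induces compatible automorphisms of $\Malcev(\pi/\Gamma_{k+1}\pi)$ and $\Malcev(\pi/\Gamma_{k+2}\pi)$ that commute with the surjection between them, with the Koszul boundary, and with passage to the ideals $\Gamma_2$ and $\Gamma_{k+1}$; hence a lift of a transformed class is the transform of a lift, and the formula $\widetilde{d}^2_{3,0}(\{x\})=\{\partial_3(\widetilde{x})\}$ is manifestly equivariant, while $\Mcg$-stability of $T_k(\pi)$ and $U_k(\pi)$ follows from the same observation together with the functoriality of the Suslin--Wodzicki map. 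Granting this, $\widetilde{\tau}_{G,k}$ is the composite of the crossed homomorphism $\widetilde{m}_{G,k}$ with the negation map and the equivariant homomorphism $\widetilde{d}^2_{3,0}$, hence is itself a crossed homomorphism; and since such a composite carries a principal crossed homomorphism to a principal one, the $G$-independence of $[\widetilde{m}_{G,k}]\in H^1(\Mcg;T_k(\pi))$ from Corollary~\ref{cor:mapping_class_group_infinitesimal} transfers to $[\widetilde{\tau}_{G,k}]\in H^1(\Mcg;U_k(\pi))$.

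I expect the main point requiring care to be precisely this last piece of equivariance bookkeeping: one must verify that $\iota$, $\widetilde{d}^2_{3,0}$ and the various projections between Malcev Lie algebras all intertwine the $\Mcg$-actions, so that the naturality arguments are legitimate. Once that is in place, the corollary is a formal transcription of the proof of Corollary~\ref{cor:mapping_class_group}.
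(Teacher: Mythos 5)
Your proposal is correct and follows exactly the route the paper intends: the paper states this corollary without proof, treating it as the formal transcription of the arguments for Corollary \ref{cor:mapping_class_group} (and Corollary \ref{cor:mapping_class_group_infinitesimal}) through the equivariant homomorphisms $\SW_3$ and $\widetilde{d}^2_{3,0}$, which is precisely what you carry out. Your explicit verification of the $\Mcg$-equivariance of $\widetilde{d}^2_{3,0}$ and the $\Mcg$-stability of $T_k(\pi)$ and $U_k(\pi)$, and your observation that $U_k(\pi)$ is free because it is a finitely generated subgroup of a $\Q$-vector space, supply exactly the details the paper leaves implicit.
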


\noindent
As mentioned in the Introduction,
similar extensions of $\tau_k$ to the mapping class group are obtained by Day in \cite{Day1,Day2}.

\subsection{Example: the abelian case}

\label{subsec:first_Johnson}

Let us consider the case $k=1$.
The  group $\Lambda^3H$ embeds into $H \otimes \Lie_2(H)$ by the map
$k \wedge h \wedge g \mapsto k \otimes [h,g] + g \otimes [k,h] + h \otimes [g,k]$.
The first Johnson homomorphism takes values in that subgroup \cite{Johnson_abelian}:
$$
\tau_1: \Torelli \longrightarrow \Lambda^3 H.
$$
According to Proposition \ref{prop:bivector}, the groupoid extension $\widetilde{\tau}_1$ of $\tau_1$
can be computed from the map $\SW_2$ for the nilpotent group $\pi/\Gamma_3 \pi$.

\begin{lemma}
\label{lem:degree_2}
Let $G$ be a finitely generated torsion-free nilpotent group of class $2$.
Then the chain map $\SW$ is given in degree $2$ by
$$
 \SW_2(\gp{g}_1|\gp{g}_2) = \frac{1}{2}\cdot \log(\gp{g}_1) \wedge \log(\gp{g}_2) 
+ \frac{1}{12} \cdot \big( \log(\gp{g}_1) - \log(\gp{g}_2)\big) \wedge [\log(\gp{g}_1),\log(\gp{g}_2)] 
$$
for all $\gp{g}_1,\gp{g}_2 \in G$.
\end{lemma}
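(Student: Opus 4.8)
The plan is to reduce to the free case by functoriality of $\SW$, and then to read off the (finitely many) unknown coefficients using only that $\SW$ is a chain map together with the normalization $\SW_1 = \log$ recalled in the Appendix. First I would observe that it is enough to prove the formula for the free nilpotent group of class $2$ on two generators, $F := \F_{\leq 2}(\gp{x},\gp{y})$, whose Malcev Lie algebra is the free nilpotent Lie $\Q$-algebra $\Malcev(F) = \Lie_{\leq 2}^\Q(x,y) = \Q x \oplus \Q y \oplus \Q[x,y]$ with $x := \log(\gp{x})$, $y := \log(\gp{y})$. Indeed, for any $\gp{g}_1,\gp{g}_2$ in a group $G$ of nilpotency class $2$ there is a unique homomorphism $\rho: F \to G$ with $\rho(\gp{x}) = \gp{g}_1$, $\rho(\gp{y}) = \gp{g}_2$, and the functoriality of $\SW$ gives $\SW_2(\gp{g}_1|\gp{g}_2) = \Lambda^2\Malcev(\rho)\big(\SW_2(\gp{x}|\gp{y})\big)$; since $\Malcev(\rho)$ sends $x,y,[x,y]$ to $\log(\gp{g}_1),\log(\gp{g}_2),[\log(\gp{g}_1),\log(\gp{g}_2)]$ respectively, the claimed formula for arbitrary $G$ follows at once from the case $G = F$.

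In that case, since $\Lambda^2\Lie_{\leq 2}^\Q(x,y)$ has basis $\{x\wedge y,\ x\wedge[x,y],\ y\wedge[x,y]\}$, I would write $\SW_2(\gp{x}|\gp{y}) = a\,x\wedge y + b\,x\wedge[x,y] + c\,y\wedge[x,y]$ with $a,b,c \in \Q$ and determine them from the chain-map identity (bar differential on one side, Koszul differential on the other). In degree $2$: the Koszul differential annihilates the central terms $x\wedge[x,y]$ and $y\wedge[x,y]$, while $\SW_1\partial_2(\gp{x}|\gp{y}) = \log(\gp{y}) - \log(\gp{x}\gp{y}) + \log(\gp{x}) = -\tfrac12[x,y]$ by the Baker--Campbell--Hausdorff formula in class $2$; comparing forces $a = \tfrac12$ (alternatively, push forward along the abelianization $F \to \Z^2$ and invoke Proposition~\ref{prop:abelian_case}). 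In degree $3$: $\Lambda^3\Lie_{\leq 2}^\Q(x,y)$ is one-dimensional, spanned by $x\wedge y\wedge[x,y]$, and its Koszul boundary vanishes because it only involves brackets with the central element $[x,y]$; hence the chain-map condition forces $\SW_2(\partial_3\beta) = 0$ for every bar $3$-chain $\beta$ in $F$. Evaluating this on $\beta = (\gp{x}|\gp{x}|\gp{y})$ and on $\beta = (\gp{y}|\gp{x}|\gp{x})$ and expanding each of the resulting $\SW_2$-values with $a = \tfrac12$, $\SW_1 = \log$ and BCH yields two linear relations whose unique solution is $b = \tfrac1{12}$, $c = -\tfrac1{12}$. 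This gives $\SW_2(\gp{x}|\gp{y}) = \tfrac12\,x\wedge y + \tfrac1{12}\,(x - y)\wedge[x,y]$, and transporting along $\rho$ as in the first paragraph produces the stated formula for all groups of class $2$.

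The hard part will be the degree-$3$ bookkeeping: one must expand $\log(\gp{x}\gp{x})$, $\log(\gp{x}\gp{y})$, $\log(\gp{y}\gp{x})$ and $\log(\gp{y}\gp{y})$ via BCH, substitute them into the formula for $\SW_2$, and keep track of signs in $\Lambda^2$. I expect this to be the only genuinely fiddly step, but it is a finite linear computation. The conceptual point worth emphasizing is that in the class-$2$ setting the third exterior power of the Malcev algebra, and the image of the Koszul differential out of it, are small enough that the chain-map property together with $\SW_1 = \log$ already rigidifies $\SW_2$ at the chain level; no finer information about the Suslin--Wodzicki construction is needed.
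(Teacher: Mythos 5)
Your proposal is correct, and I checked that the two linear relations you extract in degree $3$ do pin down $b=\tfrac1{12}$, $c=-\tfrac1{12}$: writing $\SW_2(\gp{g}_1|\gp{g}_2)=a\,g_1\wedge g_2+b\,g_1\wedge[g_1,g_2]+c\,g_2\wedge[g_1,g_2]$ by functoriality, the conditions $\SW_2\partial_3(\gp{x}|\gp{x}|\gp{y})=0$ and $\SW_2\partial_3(\gp{y}|\gp{x}|\gp{x})=0$ reduce to $2b-c=\tfrac{a}{2}$ and $b-2c=\tfrac{a}{2}$, an invertible system. However, your route is genuinely different from the paper's. You stay inside the class-$2$ free nilpotent group, where the degree-$2$ chain identity only sees $a$ (the central wedges are killed by the Koszul $\partial_2$), and you recover $b,c$ from the degree-$3$ chain identity, exploiting that $\partial_3$ vanishes on $\Lambda^3\Lie_{\leq 2}^\Q(x,y)$ so that $\SW_2\circ\partial_3=0$. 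The paper instead lifts to $\F_{\leq 3}(\gp{x},\gp{y})$: there the Koszul differential satisfies $\partial_2(x\wedge[x,y])=-[x,[x,y]]\neq 0$ and $\partial_2(y\wedge[x,y])=-[y,[x,y]]\neq 0$, so the single identity $\partial_2\SW_2(\gp{x}|\gp{y})=\SW_1\partial_2(\gp{x}|\gp{y})=\log(\gp{y})-\log(\gp{x}\gp{y})+\log(\gp{x})$, expanded by Baker--Campbell--Hausdorff to order $3$, reads off $a,b,c$ simultaneously; one then projects back to class $2$ by functoriality. The paper's version buys a one-line computation (one BCH expansion, no $\Lambda^2$ sign bookkeeping) at the cost of introducing an auxiliary class-$3$ group; yours avoids that detour but requires the "fiddly" expansion of four $\SW_2$-values per relation that you correctly flag as the real work. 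Both arguments rest on the same two inputs: functoriality of $\SW$ and the normalization $\SW_1=\log$ from Corollary \ref{cor:SW_1}.
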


\begin{proof}
Let $\F_{\leq k}(\gp{x},\gp{y})$ be the nilpotent group of class $k$ freely generated by $\{\gp{x},\gp{y}\}$.
By functoriality of $\SW$, it is enough to prove the lemma for $G=\F_{\leq 2}(\gp{x},\gp{y})$ and, for this,
we shall work with $\F_{\leq 3}(\gp{x},\gp{y})$.
The Lie algebra $\Malcev(\F_{\leq 3}(\gp{x},\gp{y}))$ is nilpotent of class $3$ freely generated by $\{x,y\}$, where $x := \log(\gp{x})$ and $y := \log(\gp{y})$.
Therefore $\left(x,y,[x,y],[x,[x,y]],[y,[x,y]]\right)$ is a basis of $\Malcev(\F_{\leq 3}(\gp{x},\gp{y}))$ as a $\Q$-vector space,
and it also induces a basis of $\Lambda^2 \Malcev(\F_{\leq 3}(\gp{x},\gp{y}))$.
In this basis, we can write
$$
\SW_2\left(\gp{x}|\gp{y}\right) = a\cdot x\wedge y + b \cdot x \wedge[x,y] + c \cdot y \wedge [x,y]  + \hbox{etc} \ \in \Lambda^2\Malcev(\F_{\leq 3}(\gp{x},\gp{y}))
$$
where $a,b,c \in \Q$ and the ``etc'' part stands for basis elements of degree at least $4$.
Consequently, we have
$$
\partial_2 \SW_2\left(\gp{x}|\gp{y}\right) =  - a\cdot [x,y] - b \cdot [x,[x,y]] - c \cdot [y, [x,y]]  + 0 \ \in \Malcev(\F_{\leq 3}(\gp{x},\gp{y})).
$$
We also deduce from Corollary \ref{cor:SW_1} and the Baker--Campbell--Hausdorff formula that
$$
\SW_1 \partial_2\left(\gp{x}|\gp{y}\right) = \log(\gp{y} -\gp{xy} +\gp{x}) 
= - \frac{1}{2}\cdot [x,y] - \frac{1}{12}\cdot[x,[x,y]] + \frac{1}{12}\cdot[y,[x,y]].
$$
We conclude that $a=1/2$ and $b=1/12=-c$. 
Since $\SW$ is functorial, this proves the lemma for $G=\F_{\leq 2}(\gp{x},\gp{y})$.
\end{proof}

We shall then deduce the following from  Theorem \ref{th:Johnson}.

\begin{corollary}
\label{cor:tau_1}
There is a groupoid homomorphism $\widetilde{\tau}_1: \Pt/\Torelli \to \frac{1}{6} \Lambda^3H$ defined by
\begin{equation}
\label{eq:tau_1_formula}
\widetilde{\tau}_1\left(
\begin{array}{c}
{\labellist \small \hair 1pt
\pinlabel {$\displaystyle \mathop{\longrightarrow}^W$} at 157 30
\pinlabel {$h_1$} [tr] at 10 58
\pinlabel {$h_2$} [br] at 5 5
\pinlabel {$h_3$} [bl] at 103 7
\pinlabel {$h_1$} [tr] at 219 58
\pinlabel {$h_2$} [br] at 219 8
\pinlabel {$h_3$} [bl] at 307 5
\endlabellist
\includegraphics[scale=0.5]{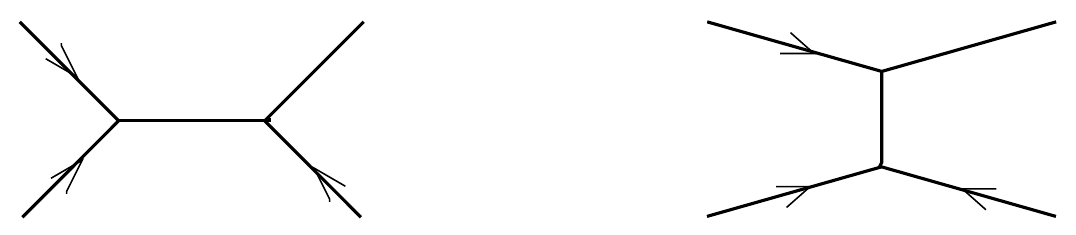}}
\end{array}
\right) = -\frac{1}{6} \cdot h_1 \wedge h_2 \wedge h_3
\end{equation}
on each Whitehead move $W$ between $H$-marked trivalent bordered fatgraphs, 
and this is an extension of $\tau_1$ to the Torelli groupoid.
\end{corollary}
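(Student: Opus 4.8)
The plan is to specialize the construction of Theorem \ref{th:Johnson} to $k=1$ and to make its target explicit. Recall from the proof of Corollary \ref{cor:M_1} that, for $k=1$, one has $T_1(\pi)\simeq\frac16\Lambda^3 H\subset\Lambda^3 H_\Q$ (here $\Malcev(\pi/\Gamma_2\pi)=H_\Q$ is abelian, so $\Img(\partial_4)=0$ in $\Lambda^3 H_\Q$) and that $\widetilde{m}_1$ sends a Whitehead move $W$ with $H$-labels $h_1,h_2,h_3$ (in the sense of Corollary \ref{cor:M_1}) to $\frac16\,h_1\wedge h_2\wedge h_3$. Since $\widetilde{\tau}_1=\widetilde{d}^2_{3,0}\circ(-\widetilde{m}_1)$ by definition, the whole statement reduces to computing the extended differential $\widetilde{d}^2_{3,0}$ on trivectors of $\Lambda^3 H_\Q$.

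First I would carry out that computation. Given $h_1\wedge h_2\wedge h_3\in\Lambda^3 H_\Q$, choose lifts $\widetilde{h}_i\in\Malcev(\pi/\Gamma_3\pi)$ and apply the Koszul boundary: up to the standard sign convention, $\partial_3(\widetilde{h}_1\wedge\widetilde{h}_2\wedge\widetilde{h}_3)=[\widetilde{h}_1,\widetilde{h}_2]\wedge\widetilde{h}_3-[\widetilde{h}_1,\widetilde{h}_3]\wedge\widetilde{h}_2+[\widetilde{h}_2,\widetilde{h}_3]\wedge\widetilde{h}_1$. As $\Malcev(\pi/\Gamma_3\pi)$ is nilpotent of class $2$, each bracket lies in the centre $\Gamma_2\Malcev(\pi/\Gamma_3\pi)=\Lie_2(H_\Q)$ and depends only on the classes $h_i\in H$; changing the lifts modifies the result only by an element of $\Gamma_2\Malcev(\pi/\Gamma_3\pi)\wedge\Gamma_2\Malcev(\pi/\Gamma_3\pi)$, so $\widetilde{d}^2_{3,0}(h_1\wedge h_2\wedge h_3)$ is well-defined and, under the isomorphism $\iota$ fixed before Theorem \ref{th:Johnson}, equals the image of $h_1\wedge h_2\wedge h_3$ by the standard embedding $\Lambda^3 H_\Q\hookrightarrow H_\Q\otimes\Lie_2(H_\Q)$, $h_1\wedge h_2\wedge h_3\mapsto h_1\otimes[h_2,h_3]+h_3\otimes[h_1,h_2]+h_2\otimes[h_3,h_1]$. (The same conclusion, with the normalization pinned down, can be reached through Proposition \ref{prop:bivector} and Lemma \ref{lem:degree_2}: expanding the four $\SW_2$-terms via $\log(\gp{a}\gp{b})=\log\gp{a}+\log\gp{b}+\frac12[\log\gp{a},\log\gp{b}]$ in the class-$2$ algebra $\Malcev(\pi/\Gamma_3\pi)$, one finds that the $\Lambda^2 H_\Q$-contributions cancel and the remaining degree-$3$ contribution comes out to $\frac16$ times the embedded trivector.) In particular $\widetilde{d}^2_{3,0}$ is injective on $\frac16\Lambda^3 H$, so $U_1(\pi)=\widetilde{d}^2_{3,0}(T_1(\pi))$ is identified via $\iota$ with $\frac16\Lambda^3 H$.

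With this identification in hand, Theorem \ref{th:Johnson} together with Corollary \ref{cor:M_1} immediately yields the groupoid homomorphism $\widetilde{\tau}_1:\Pt/\Torelli\to\frac16\Lambda^3 H$ with $\widetilde{\tau}_1(W)=\widetilde{d}^2_{3,0}\big(-\tfrac16\,h_1\wedge h_2\wedge h_3\big)=-\tfrac16\,h_1\wedge h_2\wedge h_3$, which is Formula (\ref{eq:tau_1_formula}); and that $\widetilde{\tau}_1$ extends $\tau_1$ is the case $k=1$ of diagram (\ref{eq:Johnson_extension}), in which $\Img(\tau_1)=\Lambda^3 H$ and the vertical map $\iota$ becomes precisely the inclusion $\Lambda^3 H\subset\frac16\Lambda^3 H$, so the square in the statement commutes. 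I expect the only real work to be the verification in the second paragraph, namely that the \emph{ad hoc} ``lift and differentiate'' recipe defining $\widetilde{d}^2_{3,0}$ restricts on $\Lambda^3 H_\Q$ to the classical edge homomorphism $\Lambda^3 H_\Q\hookrightarrow H_\Q\otimes\Lie_2(H_\Q)$ with the correct sign and the correct constant $\tfrac16$; the bookkeeping is routine, but it is exactly where all the coefficients get fixed, and it is what the optional computation through Lemma \ref{lem:degree_2} double-checks.
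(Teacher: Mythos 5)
Your argument is correct and lands on the same identification $U_1(\pi)\simeq\frac16\Lambda^3 H$, but your primary route is genuinely different from the paper's. The paper evaluates $\widetilde{\tau}_1(W)$ through Proposition \ref{prop:bivector}, i.e.\ as $-s\cdot\SW_2\big(\partial_3(\{\gp{k}\}|\{\gp{h}\}|\{\gp{g}\})\big)$ computed in the class-$2$ quotient $\pi/\Gamma_3\pi$; this forces it to first establish the degree-$2$ Suslin--Wodzicki formula for class-$2$ nilpotent groups (Lemma \ref{lem:degree_2}, obtained by matching coefficients against $\SW_1\partial_2$ and Baker--Campbell--Hausdorff) and then to check by hand that the $\Lambda^2 H_\Q$-terms cancel and the degree-$3$ terms assemble into $\frac16\,\iota(k\wedge h\wedge g)$. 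You instead apply the lift-and-differentiate definition of $\widetilde{d}^2_{3,0}$ directly to the trivector $\widetilde{m}_1(W)=\frac16 h_1\wedge h_2\wedge h_3$ already supplied by Corollary \ref{cor:M_1}; since any two lifts differ by wedges having a factor in the central ideal $\Gamma_2\Malcev(\pi/\Gamma_3\pi)$, whose Koszul boundary lands in $\Lambda^2\Gamma_2\Malcev(\pi/\Gamma_3\pi)$, the whole computation reduces to the boundary of one trivector, and Lemma \ref{lem:degree_2} is not needed for this corollary at all. That is a real simplification; what it does not buy you is the explicit degree-$2$ formula for $\SW$, which the paper's route produces along the way.

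One bookkeeping point should be pinned down. The Koszul boundary you display, $\partial_3(\widetilde{h}_1\wedge\widetilde{h}_2\wedge\widetilde{h}_3)=[\widetilde{h}_1,\widetilde{h}_2]\wedge\widetilde{h}_3-[\widetilde{h}_1,\widetilde{h}_3]\wedge\widetilde{h}_2+[\widetilde{h}_2,\widetilde{h}_3]\wedge\widetilde{h}_1$, has the opposite global sign to the convention fixed in the appendix, where the bracket terms of $\partial_n$ carry the sign $(-1)^{i+j}$, so that $\partial_3(g_1\wedge g_2\wedge g_3)=-[g_1,g_2]\wedge g_3+[g_1,g_3]\wedge g_2-[g_2,g_3]\wedge g_1$. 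With the paper's convention one gets $\widetilde{d}^2_{3,0}(h_1\wedge h_2\wedge h_3)=\iota\big(h_1\otimes[h_2,h_3]+h_2\otimes[h_3,h_1]+h_3\otimes[h_1,h_2]\big)$, i.e.\ \emph{plus} the standard embedding, which is the conclusion you assert and which yields the minus sign in (\ref{eq:tau_1_formula}); taken literally, your displayed boundary would flip that sign. Your hedge ``up to the standard sign convention'' and the cross-check via Lemma \ref{lem:degree_2} do resolve the ambiguity, but in a final write-up the sign must be tied to the appendix's $\partial_3$ for the coefficient $-\frac16$ to come out.
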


\noindent
We deduce from (\ref{eq:M_1_formula}) and (\ref{eq:tau_1_formula}) that $\widetilde{\tau}_1=-\widetilde{m}_1$,
which is compatible with the fact that $m_1$ is equal to $-\tau_1$ by (\ref{eq:Morita_to_Johnson}).
The map $-6 \widetilde{\tau}_1$ coincides with Morita and Penner's extension of $-6 \tau_1$ to the Torelli groupoid \cite{MP}.
Besides, Corollary \ref{cor:mapping_class_group_Johnson} produces (for each $\pi$-marked trivalent bordered fatgraph $G$)
an extension $\widetilde{\tau}_{G,1}:\Mcg \to \frac{1}{6} \Lambda^3H$ of $\tau_1$ to the mapping class group. 
We have
$$
H^1\Big(\Mcg;\frac{1}{2}\Lambda^3 H\Big) \longrightarrow H^1\Big(\Mcg;\frac{1}{6}\Lambda^3 H\Big), \
\left[\! -\widetilde{k}\right] \longmapsto \left[\widetilde{\tau}_{G,1}\right]
$$
where $\widetilde{k}$ is Morita's extension of $-\tau_1$ to the mapping class group \cite{Morita_extension}.
(This follows from the fact that $H^1\left(\Sp(2g;\Z);\frac{1}{6 }\Lambda^3 H\right) \simeq H^1\left(\Sp(2g;\Z);\Lambda^3 H\right)$
is trivial, as proved by Morita \cite{Morita_extension}.)

\begin{proof}[Proof of Corollary \ref{cor:tau_1}]
Let $\{\gp{g}\}, \{\gp{h}\}, \{\gp{k}\} \in \pi/\Gamma_3 \pi$,
whose logarithms are denoted by $g,h,k \in \Malcev(\pi/\Gamma_3 \pi)$ respectively.
Using Lemma \ref{lem:degree_2}, we obtain
the following identities in the quotient space $\Lambda^2 \Malcev(\pi/\Gamma_3 \pi) /  \Lambda^2 \Gamma_2 \Malcev(\pi/\Gamma_3 \pi)$:
\begin{eqnarray*}
\! \! \! \! \! \! &&\SW_2\big((\{\gp{h}\}|\{\gp{g}\})-(\{\gp{kh}\}|\{\gp{g}\})+(\{\gp{k}\}|\{\gp{hg}\})-(\{\gp{k}\}|\{\gp{h}\})\big)\\
\! \! \!  \! \! \! &=& \frac{1}{2} h \wedge g + \frac{1}{12}(h-g)\wedge [h,g] - \frac{1}{2} \left(k+h+\frac{1}{2}[k,h]\right)\wedge g - \frac{1}{12}(k+h-g)\wedge [k+h,g] +\\
\! \! \! \! \! \! &&  \frac{1}{2} k \wedge \left(h+g +\frac{1}{2}[h,g]\right) + \frac{1}{12}(k-(h+g)) \wedge [k,h+g] - \frac{1}{2} k \wedge h - \frac{1}{12}(k-h)\wedge [k,h]\\
\! \! \! \! \! \! &=&  \frac{1}{6}h\wedge [g,k]  + \frac{1}{6} k \wedge [h,g]   + \frac{1}{6}g \wedge [k,h].
\end{eqnarray*}
This shows that $U_1(\pi)$ is the image of $\frac{1}{6} \Lambda^3 H$ by the embedding
$$
\xymatrix{
\frac{1}{6} \Lambda^3 H\ \ar@{>->}[r] & H_\Q \otimes \Lie_2(H_\Q)\ \ar@{>->}[r]^\iota &  
\frac{\Lambda^2 \Malcev(\pi/\Gamma_3 \pi)}{\Lambda^2 \Gamma_2 \Malcev(\pi/\Gamma_3 \pi)}.
}
$$
Let $\widetilde{\tau}_1$ be the groupoid extension of $\tau_1$ produced by Theorem \ref{th:Johnson}.
Then formula (\ref{eq:tau_1_formula}) for $\widetilde{\tau}_1(W)$ is  deduced from Proposition \ref{prop:bivector}. 
\end{proof}

\appendix

\section{Review of Malcev Lie algebras and their homology}

Let $G$ be a group. An efficient way to define the Malcev Lie algebra of $G$ is from its group algebra $\Q[G]$.
This construction, initiated by Jennings \cite{Jennings}, is developed by Quillen in \cite{Quillen} to which we refer for full details.
We denote by $I$ the augmentation ideal of $\Q[G]$. The $I$-adic completion of $\Q[G]$
$$
\widehat{\Q}[G] := \varprojlim_{k} \Q[G]/ I^k
$$
equipped with the filtration 
$$
\widehat{I^j} := \varprojlim_{k\geq j} I^j/ I^k, \quad \forall j\geq 0
$$
is a complete Hopf algebra in the sense of Quillen \cite{Quillen}. Let $\widehat \Delta$ be the coproduct. 

\begin{definition}
The \emph{Malcev Lie algebra} of $G$ is the Lie algebra of primitive elements 
$$
\Malcev(G) :=  \Prim(\widehat{\Q}[G]) = 
\big\{x \in \widehat{\Q}[G]: \widehat\Delta(x)= x \widehat{\otimes} 1 + 1 \widehat{\otimes} x \big\}.
$$
\end{definition}

\noindent
Equivalently, one can consider the \emph{Malcev completion} of $G$, defined as the group of group-like elements:
$$
\MalcevCompletion(G) := \GLike(\widehat{\Q}[G]) = 
\big\{x \in \widehat{\Q}[G]: \widehat\Delta(x)= x \widehat{\otimes} x, x\neq 0 \big\}.
$$
The Malcev completion and the Malcev Lie algebra of a group $G$ are
in one-to-one correspondence via the exponential and logarithmic series:
$$
\MalcevCompletion(G) \subset  1 + \widehat{I} 
\xymatrix{
\ar@/^0.5pc/[rr]^-\log_-\simeq &&  \ar@/^0.5pc/[ll]^-\exp
}
\widehat{I}   \supset \Malcev(G).
$$
The inclusion $G \subset \Q[G]$ induces a canonical map $G \to \MalcevCompletion(G)$. 
It is injective if, and only if, $G$ is residually torsion-free nilpotent \cite{Malcev_embedding,Jennings}.
In such a case, we  regard $G$ as a subgroup of $\MalcevCompletion(G)$. 
For example, free groups and free nilpotent groups are residually torsion-free nilpotent.\\

The homology of  groups is related to the homology of their Malcev Lie algebras through the following statement.

\begin{theorem}[Pickel \cite{Pickel}, Suslin--Wodzicki \cite{SW}]
\label{th:PSW}
Let $G$ be a finitely generated torsion-free nilpotent group. 
Then there is a canonical isomorphism
$$
\P: H_*\left(G;\Q\right) \longrightarrow H_*\left(\Malcev(G);\Q\right)
$$
which is induced by a canonical chain map 
$$
\SW: \Bar_*(G) \otimes \Q \longrightarrow \Lambda^* \Malcev(G)
$$
between the bar complex of $G$ with $\Q$-coefficients and the Koszul complex of $\Malcev(G)$.
Moreover, $\SW$ and $\P$ are functorial in $G$.
\end{theorem}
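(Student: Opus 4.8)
The plan is to follow the two references cited in the statement --- Pickel \cite{Pickel} for the isomorphism $\P$, and Suslin--Wodzicki \cite{SW} for the chain-level refinement --- organizing everything around the completed group algebra $\widehat{\Q}[G]$ described above. First I would record the two facts that frame the problem. Since $G$ is finitely generated torsion-free nilpotent, $\Malcev(G)$ is a finite-dimensional nilpotent Lie $\Q$-algebra, and Quillen's theory \cite{Quillen} identifies the complete Hopf algebra $\widehat{\Q}[G]$ with $\Uhat(\Malcev(G))$, the completed universal enveloping algebra of $\Malcev(G)$. On the one hand the bar complex $\Bar_*(G)\otimes\Q$ is obtained from the bar resolution of $\Q$ over $\Q[G]$ by applying $\Q\otimes_{\Q[G]}(-)$, so it computes $\Tor_*^{\Q[G]}(\Q,\Q)=H_*(G;\Q)$; on the other hand the Koszul (Chevalley--Eilenberg) complex $\Lambda^*\Malcev(G)$ arises in the same way from the Koszul resolution over $\U(\Malcev(G))$ and computes $\Tor_*^{\U(\Malcev(G))}(\Q,\Q)=H_*(\Malcev(G);\Q)$. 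Thus the problem is to compare these two $\Tor$-algebras over two rings whose completions agree.

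The construction of $\P$ rests on the substantive input of \cite{SW}: for $G$ finitely generated torsion-free nilpotent, the completion maps $\Q[G]\to\widehat{\Q}[G]$ and $\U(\Malcev(G))\to\Uhat(\Malcev(G))$ both induce isomorphisms on $\Tor_*(\Q,\Q)$. Finite generation is essential here: by P.~Hall's theorem $\Q[G]$ is Noetherian, the augmentation ideal $I$ is finitely generated, the Artin--Rees lemma makes $\widehat{\Q}[G]$ flat over $\Q[G]$ with $\widehat{\Q}[G]\otimes_{\Q[G]}\Q=\Q$, and comparing the $I$-adic filtrations on the two bar complexes at the level of associated graded modules yields the isomorphism; the enveloping-algebra side is entirely analogous. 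Composing these isomorphisms with Quillen's Hopf-algebra identification produces the canonical isomorphism $\P\colon H_*(G;\Q)\to H_*(\Malcev(G);\Q)$. (Alternatively, $\P$ may be recovered by Pickel's original argument --- an induction on the nilpotency class comparing the Lyndon--Hochschild--Serre spectral sequence of a central extension of $G$ with the analogous spectral sequence for the corresponding central extension of $\Malcev(G)$ --- or analytically, via Nomizu's theorem, by realizing $G$ as a cocompact lattice in the simply connected nilpotent Lie group with Lie algebra $\Malcev(G)\otimes\R$.)

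To upgrade this abstract isomorphism to an honest chain map $\SW$, I would proceed as follows. After completion, the bar resolution and the Koszul resolution become free resolutions of $\Q$ over the \emph{same} complete ring $\widehat{\Q}[G]=\Uhat(\Malcev(G))$, so the comparison theorem for projective resolutions produces a morphism between them lifting $\Id_\Q$; applying $\Q\otimes_{\widehat{\Q}[G]}(-)$ turns this morphism into precisely a chain map $\Bar_*(G)\otimes\Q\to\Lambda^*\Malcev(G)$, which is $\SW$. With a canonical choice of contracting homotopy this chain map can be written down by a closed formula in terms of the logarithm --- which embeds $G\subset\MalcevCompletion(G)$ into $\Malcev(G)$ --- and the Baker--Campbell--Hausdorff series $\bch$; see Proposition \ref{prop:abelian_case} in the abelian case and Lemma \ref{lem:degree_2} in the class-$2$ case. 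That $\SW$ induces $\P$ in homology is then immediate from the completion-invariance above, and functoriality in $G$ follows from the naturality of completion, of the logarithm and of $\bch$, provided the auxiliary homotopies are chosen in a natural way.

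The main obstacle is the completion-invariance of $\Tor_*(\Q,\Q)$ for finitely generated torsion-free nilpotent $G$: this is exactly the content of \cite{SW}, it is the only step requiring input beyond formal homological algebra, and it relies on the Noetherian and Artin--Rees properties of $\Q[G]$. A close secondary difficulty is the rigidification in the last step --- producing an \emph{explicit}, \emph{strictly} functorial chain map rather than one defined only up to chain homotopy --- which forces one both to make the auxiliary homotopies canonical and to keep control of the Baker--Campbell--Hausdorff denominators, the same phenomenon exploited in Lemma \ref{lem:finite_generation}.
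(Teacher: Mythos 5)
Your proposal is correct and follows essentially the same route as the paper's own sketch: identify $\widehat{\Q}[G]$ with $\Uhat(\Malcev(G))$ via Quillen, use flatness of the completions to transport $\Tor_*(\Q,\Q)$ across the two resolutions, and rigidify the comparison map to a functorial chain map via the canonical (PBW-based) contracting homotopy on the completed Koszul resolution. The only cosmetic differences are your added alternative constructions of $\P$ (spectral-sequence induction, Nomizu) and a slight overstatement that $\SW$ admits a closed formula in general, whereas the construction is inductive and only made explicit in low nilpotency class.
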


\begin{proof}[Sketch of the proof]
The isomorphism $H_*\left(G;\Q\right) \simeq H_*\left(\Malcev(G);\Q\right)$
is due to Pickel \cite{Pickel}, who proceeds in the following manner.
First, he proves that $\widehat{\Q}[G]$ is flat as a $\Q[G]$-module
and that, similarly, $\Uhat(\Malcev(G))$ is flat as an $\U(\Malcev(G))$-module.
(Here, $\Uhat(\Malcev(G))$ is the completion of the enveloping algebra  $\U(\Malcev(G))$
with respect to powers of $\Malcev(G)$.)
Next, he deduces from \cite{Jennings} that the inclusion 
$\Malcev(G) \subset \widehat{\Q}[G]$ induces an algebra isomorphism
$$
\Uhat(\Malcev(G)) \simeq \widehat{\Q}[G].
$$
Finally, he considers, for all $n\geq 1$, the following sequence of isomorphisms:
$$
\xymatrix @!0 @R=1.1cm @C=5cm  {
{\Tor_n^{\Q[G]}(\Q,\Q)} \ar[r]^-\simeq &
{\Tor_n^{\widehat{\Q}[G]}(\Q,\Q) \simeq   \Tor_n^{\Uhat(\Malcev(G))}(\Q,\Q)} &
 {\Tor_n^{\U(\Malcev(G))}(\Q,\Q)} \ar[l]_-\simeq  \\
{H_n(G;\Q)}  \ar@{=}[u] \ar@{-->}_-{\P} [rr] & & {H_n(\Malcev(G);\Q).} \ar@{=}[u]
}
$$

A chain map $\SW$ inducing $\P$ in homology is defined by Suslin and Wodzicki in \cite{SW} as follows.
As a free resolution of $\Q$ as a $\Q[G]$-module, we take the bar resolution:
$$
\xymatrix{
\cdots \ar[r] & B_2 \ar[r]^-{\partial_2} & B_1 \ar[r]^-{\partial_1} & B_0 \ar[r]^-{\varepsilon} & \Q \ar[r] & 0 
}
$$
where $B_n = \Q[G] \cdot G^{\times n}$, $\varepsilon$ is the augmentation of $\Q[G]$ and
$$
\partial_n\left(\gp{g}_1| \cdots | \gp{g}_n  \right) = \gp{g}_1 \cdot (\gp{g}_2 | \cdots | \gp{g}_n)
+ \sum_{i=1}^{n-1} (-1)^i \cdot (\gp{g}_1 | \cdots |\gp{g}_i \gp{g}_{i+1}| \cdots |\gp{g}_n)
+ (-1)^n \cdot (\gp{g}_1 |\cdots| \gp{g}_{n-1}). 
$$
As  a free resolution of $\Q$ as an  $\U(\Malcev(G))$-module, we take the Koszul resolution:
$$
\xymatrix{
\cdots \ar[r] & K_2 \ar[r]^-{\partial_2} & K_1 \ar[r]^-{\partial_1} & K_0 \ar[r]^-{\eta} & \Q \ar[r] & 0 
}
$$
where $K_n = \U(\Malcev(G)) \otimes \Lambda^n \Malcev(G)$,
$\eta$ is the augmentation of $\U(\Malcev(G))$ and
\begin{eqnarray*}
\partial_n\left(1 \otimes g_1 \wedge \cdots \wedge g_n \right) &=& 
\sum_{i=1}^n (-1)^{i+1} g_i \otimes g_1 \wedge \cdots \widehat{g_i} \cdots \wedge g_n \\
&& + \sum_{1 \leq i<j \leq n} (-1)^{i+j} \otimes [g_i,g_j] 
\wedge g_1 \wedge \cdots \widehat{g_i} \cdots \widehat{g_j} \cdots \wedge g_n.
\end{eqnarray*}
By tensoring and using that  $\widehat{\Q}[G]$ is flat as a $\Q[G]$-module,   
we get a free resolution of $\Q$ as a $\widehat{\Q}[G]$-module:
$$
\widehat{\Q}[G] \otimes_{\Q[G]} B_* \longrightarrow \Q \longrightarrow 0.
$$
Similarly, we obtain a free resolution of $\Q$ as an $\widehat{\U}(\Malcev(G))$-module:
$$
\widehat{\U}(\Malcev(G)) \otimes_{\U(\Malcev(G))} K_* \longrightarrow \Q \longrightarrow 0.
$$
Consequently, there exists a homotopy equivalence  
\begin{equation}
\label{eq:chain_map}
f: \widehat{\Q}[G] \otimes_{\Q[G]} B_* \longrightarrow \widehat{\U}(\Malcev(G)) \otimes_{\U(\Malcev(G))} K_*
\end{equation}
of chain complexes over the ring $\widehat{\Q}[G] = \Uhat(\Malcev(G))$.
Such a homotopy equivalence is unique up to homotopy.
To construct an explicit one in \cite{SW}, Suslin and Wodzicki first define a canonical contracting  homotopy
\begin{equation}
\label{eq:contracting_homotopy}
s:  \widehat{\U}(\Malcev(G)) \otimes_{\U(\Malcev(G))} K_* \longrightarrow  \widehat{\U}(\Malcev(G)) \otimes_{\U(\Malcev(G))} K_{*+1}
\end{equation}
using the Poincar\'e--Birkhoff--Witt isomorphism. 
Next, they define $f$ to be the identification $\widehat{\Q}[G] = \Uhat(\Malcev(G))$ in degree $0$ and they use  the  inductive formula 
$$
\forall r \in \widehat{\Q}[G] = \Uhat(\Malcev(G)), \forall \gp{g_1}, \dots, \gp{g}_n \in G, \
f_n\left(r\cdot (\gp{g_1}\vert \cdots \vert \gp{g}_n)\right) = r \cdot s_{n-1} f_{n-1} \partial_{n}  (\gp{g_1}\vert \cdots \vert \gp{g}_n)
$$
in degree $n>0$. (This is the usual way of constructing a chain map from a contracting homotopy \cite[Proposition XI.5.2]{CE}.)
Since the bar complex with $\Q$-coefficients is 
$$
\Bar_*(G) \otimes \Q = \Q \otimes_{\Q[G]} B_* = \Q \otimes_{\widehat{\Q}[G]} \left(\widehat{\Q}[G] \otimes_{\Q[G]} B_*\right)
$$
and since the Koszul complex is 
$$
\Lambda^* \Malcev(G) = \Q \otimes_{\U(\Malcev(G))} K_* = 
\Q \otimes_{\Uhat(\Malcev(G))} \left(\Uhat(\Malcev(G))\otimes_{\U(\Malcev(G))} K_*\right),
$$
we define $\SW$ to be $f$ tensored with $\Q$ over  $\widehat{\Q}[G] = \Uhat(\Malcev(G))$.
By its definition, the isomorphism $\P$ is represented by $\SW$ at the chain level. 
Moreover, since the contracting homotopy $s$ is functorial by construction \cite{SW}, the chain maps $f$ and $\SW$ are functorial.
\end{proof}

To illustrate Theorem \ref{th:PSW}, let us consider the easy case where $G$ is a finitely generated free abelian group. 
In this case, we have a complete Hopf algebra isomorphism:
$$
\widehat{\operatorname{S}}(G \otimes \Q) \mathop{\longrightarrow}^\simeq \widehat{\Q}[G], \
g\otimes 1 \longmapsto \log(\gp{g})=\sum_{n\geq 1}\frac{(-1)^{n+1}}{n} \cdot (\gp{g}-1)^n.
$$
(Here, a same element of $G$ is denoted by $g$ or $\gp{g}$ depending on whether $G$ 
is regarded as a $\Z$-module or as a group.) 
Thus, the  abelian Lie algebra $\Malcev(G)$ can be identified with $G\otimes \Q$ (equipped with the trivial Lie bracket).
Theorem \ref{th:PSW} asserts that
$\Lambda^* (G\otimes \Q) = H_*(\Malcev(G);\Q)$ is isomorphic to $H_*(G;\Q)$, which is well-known. 
The isomorphism even exists with $\Z$-coefficients
and is defined at the chain level thanks to the Pontryagin product \cite[\S V.6]{Brown_groups}:
$$
\forall g_1,\dots,g_n \in G, \
g_1 \wedge \cdots \wedge g_n \longmapsto
\sum_{\sigma \in \mathfrak{S}_n} \varepsilon(\sigma) \cdot (\gp{g}_{\sigma(1)}\vert \cdots \vert \gp{g}_{\sigma(n)}).
$$
The canonical chain map $\SW$ happens to be a left-inverse of that map.

\begin{proposition}
\label{prop:abelian_case}
If $G$ is a finitely generated free abelian group,
then the chain map $\SW$ introduced in Theorem \ref{th:PSW} is given by
$$
\forall \gp{g}_1,\dots,\gp{g}_n \in G, \
\SW_n(\gp{g}_1\vert \cdots \vert \gp{g}_n) =
\frac{1}{n!} \cdot \log(\gp{g}_1) \wedge \cdots \wedge \log(\gp{g}_n).
$$
\end{proposition}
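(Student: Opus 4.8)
The plan is to pin down $\SW$ in the abelian case by functoriality together with a symmetry argument, rather than by unwinding the Suslin--Wodzicki contracting homotopy; the point is that $\Lambda^n$ of the $n$-dimensional abelian Lie algebra is one-dimensional, so only a single rational number has to be computed.

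First I would reduce to a universal situation. Given a finitely generated free abelian group $G$ and elements $\gp{g}_1,\dots,\gp{g}_n \in G$, let $F$ be the free abelian group on generators $\gp{t}_1,\dots,\gp{t}_n$ and let $\phi\colon F \to G$ be the homomorphism $\gp{t}_i \mapsto \gp{g}_i$. Since $F$ is finitely generated torsion-free nilpotent, Theorem \ref{th:PSW} applies to it and the functoriality of $\SW$ gives
$$
\SW_n(\gp{g}_1\vert\cdots\vert\gp{g}_n) = \Lambda^n\Malcev(\phi)\big(\SW_n(\gp{t}_1\vert\cdots\vert\gp{t}_n)\big).
$$
As $\Malcev(\phi)$ carries $\log(\gp{t}_i)$ to $\log(\gp{g}_i)$, it suffices to prove the formula for $G=F$ with $\gp{g}_i = \gp{t}_i$. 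In that case $\Malcev(F)=F\otimes\Q$ is the $n$-dimensional abelian Lie algebra with basis $x_i := \log(\gp{t}_i)$, so $\Lambda^n\Malcev(F)=\Q\cdot(x_1\wedge\cdots\wedge x_n)$ and $\SW_n(\gp{t}_1\vert\cdots\vert\gp{t}_n)=c\cdot x_1\wedge\cdots\wedge x_n$ for a single unknown $c\in\Q$.

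Next I would exploit symmetry. Applying functoriality to the basis-permuting automorphism $\gp{t}_i\mapsto\gp{t}_{\tau(i)}$ of $F$, for $\tau\in\mathfrak{S}_n$, yields $\SW_n(\gp{t}_{\tau(1)}\vert\cdots\vert\gp{t}_{\tau(n)})=\varepsilon(\tau)\,c\cdot x_1\wedge\cdots\wedge x_n$, and summing over $\tau$,
$$
\SW_n\Big(\sum_{\tau\in\mathfrak{S}_n}\varepsilon(\tau)\,(\gp{t}_{\tau(1)}\vert\cdots\vert\gp{t}_{\tau(n)})\Big)= n!\,c\cdot x_1\wedge\cdots\wedge x_n.
$$
On the other hand, the cycle $z$ inside the parentheses is the image of $x_1\wedge\cdots\wedge x_n = \log(\gp{t}_1)\wedge\cdots\wedge\log(\gp{t}_n)$ under the Pontryagin-product chain map recalled just before the statement; since that chain map realizes the inverse of Pickel's isomorphism in the abelian case, $\P([z])=x_1\wedge\cdots\wedge x_n$. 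Because $\SW$ induces $\P$ on homology and the Koszul complex of an abelian Lie algebra has zero differential, this yields $\SW_n(z)=x_1\wedge\cdots\wedge x_n$ on the nose. Comparing the two displayed computations and using $x_1\wedge\cdots\wedge x_n\neq 0$ forces $c=1/n!$, which is the claimed formula.

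The one subtle point, and the thing I expect to need the most care, is the identification $\P([z])=x_1\wedge\cdots\wedge x_n$: it amounts to the fact that in the abelian case Pickel's isomorphism is precisely the one induced by the Pontryagin product, which is exactly what was recalled before the Proposition, so nothing further is required. (Alternatively, and more in the spirit of illustrating the machinery of Theorem \ref{th:PSW}, one could argue by induction on $n$ straight from the Suslin--Wodzicki construction, using that the contracting homotopy of the Koszul resolution of $\Q$ over $\operatorname{S}(\Malcev(F))$ is the classical de Rham one, $\omega\mapsto\frac{1}{w}\,d\omega$ on the weight-$w$ summand, and feeding this into the inductive formula $f_n = s_{n-1}f_{n-1}\partial_n$; this produces the factor $1/n!$ as well, but with heavier bookkeeping.)
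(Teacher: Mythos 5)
Your structural reductions are correct and appealing: functoriality reduces everything to the free abelian group $F$ on $\gp{t}_1,\dots,\gp{t}_n$, where $\Lambda^n\Malcev(F)$ is one-dimensional; the permutation argument correctly gives $\SW_n(\gp{t}_1\vert\cdots\vert\gp{t}_n)=c\cdot x_1\wedge\cdots\wedge x_n$ and $\SW_n(z)=n!\,c\cdot x_1\wedge\cdots\wedge x_n$ for the antisymmetrized cycle $z$; and the observation that $\SW_n(z)=\P([z])$ on the nose (because the Koszul differential of an abelian Lie algebra vanishes) is fine. But the step you yourself flag as "the one subtle point" is a genuine gap, and the way you discharge it is circular. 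You need $\P([z])=x_1\wedge\cdots\wedge x_n$, i.e. that Pickel's isomorphism is exactly the inverse of the Pontryagin-product isomorphism. The passage you invoke ("the canonical chain map $\SW$ happens to be a left-inverse of that map") is not an independent input: it is precisely the consequence of Proposition \ref{prop:abelian_case}, obtained by antisymmetrizing the formula $\SW_n(\gp{g}_1\vert\cdots\vert\gp{g}_n)=\tfrac{1}{n!}\log(\gp{g}_1)\wedge\cdots\wedge\log(\gp{g}_n)$. As written, your argument only shows $c=\lambda/n!$, where $\lambda\in\Q^{\times}$ is defined by $\P([z])=\lambda\cdot x_1\wedge\cdots\wedge x_n$, and pinning down $\lambda$ is the entire quantitative content of the proposition.

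To close the gap non-circularly you would have to compute $\P$ by other means: e.g.\ show that $\P$ is multiplicative for the Pontryagin product on $\Tor^{\Q[G]}_*(\Q,\Q)$ and the wedge product on $\Tor^{\U(\Malcev(G))}_*(\Q,\Q)$ (true, since all maps in Pickel's chain of Tor isomorphisms are induced by Hopf algebra maps, but this needs to be stated and proved), and that in degree $1$ it sends the class of $\gp{g}-1$ in $I/I^2$ to the class of $\log(\gp{g})$ (true, since $\log(\gp{g})\equiv\gp{g}-1$ modulo $\widehat{I^2}$). Note you cannot quote Corollary \ref{cor:SW_1} for the degree-$1$ statement, as it is itself deduced from this proposition. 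The paper avoids all of this by working at the chain level: it computes the Suslin--Wodzicki contracting homotopy explicitly in the abelian case (the Euler/de Rham homotopy you mention) and runs the inductive formula $f_n=s_{n-1}f_{n-1}\partial_n$, which produces the coefficient $1/n!$ directly. So your parenthetical "alternative" is in fact the proof; the symmetry argument is a nice packaging of the reduction, but it does not by itself produce the normalization.
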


\begin{proof}
We use the same notation as in the proof of Theorem \ref{th:PSW}.
With the condition $\left[\Malcev(G),\Malcev(G)\right]=0$, 
the contracting homotopy (\ref{eq:contracting_homotopy}) is easily computed from its definition given in \cite{SW}.
We find 
$$
s(g_1 \cdots g_p \otimes h_1 \wedge \cdots \wedge h_q)
= \left\{\begin{array}{ll}
0 & \hbox{if } p=0\\
\frac{1}{p+q} {\displaystyle \sum_{i=1}^p g_1 \cdots \widehat{g_i} \cdots g_p \otimes g_i \wedge h_1 \wedge \cdots \wedge h_q} 
& \hbox{if } p>0
\end{array}\right.
$$
for all $p,q\geq 0$ and $g_1,\dots,g_p,h_1,\dots,h_q \in \Malcev(G)$. 
It can then be checked by induction on $n\geq 0$ that the chain map (\ref{eq:chain_map}) is given by the formula
$$
f_n(\gp{g}_1 \vert \cdots \vert \gp{g}_n) 
= \sum_{i_1,\dots,i_n \geq 1} 
\frac{1}{\prod_{k=1}^n (i_k-1)! \cdot \prod_{k=1}^n(i_k+\cdots +i_n)}
g_1^{i_1-1} \cdots g_n^{i_n-1} \otimes g_1 \wedge \cdots \wedge g_n
$$
for all  $\gp{g}_1,\dots,\gp{g}_n \in G$ and where  $g_1 := \log(\gp{g}_1),\dots,g_n := \log(\gp{g}_n) \in \Malcev(G)$.
By definition of the chain map $\SW$, 
$\SW_n(\gp{g}_1\vert \cdots \vert \gp{g}_n)$ is the term indexed by $i_1=\cdots=i_n=1$ in the above sum.
\end{proof}

\begin{corollary}
\label{cor:SW_1}
Let $G$ be a finitely generated torsion-free nilpotent group. Then the chain map $\SW$  introduced in Theorem \ref{th:PSW}
coincides with $\log: G \to \Malcev(G)$ in degree $1$.
\end{corollary}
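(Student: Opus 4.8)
The plan is to reduce the claim to the case of the infinite cyclic group, where the answer is already recorded in Proposition \ref{prop:abelian_case}, and then transport it by functoriality of the Suslin--Wodzicki chain map.

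First I would observe that $\SW_1 \colon \Bar_1(G)\otimes\Q \to \Malcev(G)$ is $\Q$-linear on the free module generated by the elements of $G$, so it is enough to identify $\SW_1(\gp g)$ for an arbitrary $\gp g \in G$. Fix such a $\gp g$, let $\gp t$ be a fixed generator of $\Z$, and let $\phi\colon \Z \to G$ be the unique group homomorphism with $\phi(\gp t)=\gp g$. The group $\Z$ is finitely generated, torsion-free and nilpotent, so Theorem \ref{th:PSW} applies to it; reading the functoriality square of $\SW$ associated to $\phi$ in degree $1$ on the element $(\gp t) \in \Bar_1(\Z)\otimes\Q$ gives
\[
\SW_1(\gp g) = \SW_1\big(\Bar_1(\phi)(\gp t)\big) = \Malcev(\phi)\big(\SW_1(\gp t)\big).
\]
By Proposition \ref{prop:abelian_case}, applied to the finitely generated free abelian group $\Z$ in the case $n=1$, one has $\SW_1(\gp t)=\log(\gp t) \in \Malcev(\Z)$. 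It then remains to note that the Lie algebra map $\Malcev(\phi)$ is the restriction to primitives of the continuous algebra homomorphism $\widehat{\Q}[\Z] \to \widehat{\Q}[G]$ induced by $\phi$; this homomorphism sends $\gp t$ to $\gp g$, hence $\gp t-1$ to $\gp g-1$, and therefore---being multiplicative and continuous---carries the series $\log(\gp t)=\sum_{n\geq 1}\frac{(-1)^{n+1}}{n}(\gp t-1)^n$ to $\log(\gp g)$. Combining these identities yields $\SW_1(\gp g)=\log(\gp g)$, as claimed. (Note that no hypothesis on $\gp g$ is needed: if $\gp g$ is trivial, $\phi$ is the trivial map and the computation still returns $\log(1)=0$.)

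I do not expect a genuine obstacle: the entire content is the abelian computation of Proposition \ref{prop:abelian_case} together with the functoriality of $\SW$ asserted in Theorem \ref{th:PSW}, the only delicate point being the (elementary) fact that $\Malcev(-)$ intertwines the logarithm maps, which is immediate from the group-algebra description of $\Malcev$ recalled in the Appendix. The alternative of evaluating the inductive formula $\SW_1(\gp g) = 1\otimes_{\widehat{\Q}[G]} s_0 f_0 \partial_1(\gp g)$ directly would force one to make the Suslin--Wodzicki contracting homotopy $s_0$ explicit for a general nilpotent group, which this route avoids.
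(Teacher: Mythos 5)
Your proposal is correct and follows exactly the paper's own argument: reduce to the infinite cyclic group via functoriality of $\SW$ and invoke Proposition \ref{prop:abelian_case} for the case $n=1$. The only difference is that you spell out the (routine) verification that $\Malcev(\phi)$ intertwines the logarithm series, which the paper leaves implicit.
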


\begin{proof}
According to Proposition \ref{prop:abelian_case}, this is true when $G$ is an infinite cyclic group.
The general case follows by functoriality.
\end{proof}

%
%
%
%
%
%

\bibliographystyle{abbrv}

\bibliography{EMH}

\end{document}